\numberwithin{equation}{section} 
\newtheorem{theorem}{Theorem}[section]
\newtheorem{lemma}[theorem]{Lemma}
\newtheorem{corollary}[theorem]{Corollary}
\newtheorem{remark}[theorem]{Remark}
\numberwithin{equation}{section}
\providecommand{\Div}{\divergence}
\providecommand{skp}[2]{{\langle{#1},{#2}\rangle}}
\providecommand{\dx}{\,\mathrm{d}x}
\providecommand{\dy}{\,\mathrm{d}y}
\providecommand{\ds}{\,\mathrm{d}s}
\providecommand{\dt}{\,\mathrm{d}t}
\providecommand{\dtau}{\,\mathrm{d}\tau}
\providecommand{\dxt}{\dx\dt}
\providecommand{\R}{\setR}
\providecommand{\N}{{\mathbb{N}}}
\providecommand{\ep}{\bfvarepsilon}
\providecommand{\Ma}{\ensuremath{\mathcal{M}^\alpha}}
\providecommand{\Mas}{\ensuremath{\mathcal{M}^\alpha_\sigma}}
\providecommand{\Masmk}{\ensuremath{\mathcal{M}^{\alpha_{m,k}}_\sigma}}
\providecommand{\Oal}{\ensuremath{\mathcal{O}^\alpha_\lambda}}
\providecommand{\zal}{\ensuremath{\bfz^\alpha_\lambda}}
\providecommand{\zmk}{\ensuremath{\bfz_{m,k}}}
\providecommand{\umk}{\ensuremath{\bfu_{m,k}}}
\providecommand{ }[1]{\textcolor{blue}{#1}}
\newcounter{formel}
\providecommand{\Bog}{\ensuremath{\text{\rm Bog}}}
\providecommand{\Qz}{\ensuremath{Q_0}}
\providecommand{\Bz}{\ensuremath{B_0}}
\providecommand{\Iz}{\ensuremath{I_0}}
\providecommand{\zQp}{\ensuremath{\bfz_{Q'}}}
\providecommand{\Rdr}{{\setR^3}}
\begin{document}

\begin{frontmatter}

\title{Solenoidal Lipschitz truncation for parabolic PDE's}
  


\author{D.~Breit}
\ead{breit@math.lmu.de}

\author{L.~Diening\corref{cor1}}
\ead{diening@math.lmu.de}

\author{S.~Schwarzacher}
\ead{schwarz@math.lmu.de}

\address{LMU Munich, Institute of Mathematics, Theresienstr. 39,
  80333-Munich, Germany}

\begin{abstract}
  We consider functions $\bfu\in L^\infty(L^2)\cap L^p(W^{1,p})$ with
  $1<p<\infty$ on a time space domain. Solutions to non-linear
  evolutionary PDE's typically belong to these spaces. Many
  applications require a Lipschitz approximation $\bfu_\lambda$ of
  $\bfu$ which coincides with $\bfu$ on a large set. For problems
  arising in fluid mechanics one needs to work with solenoidal
  (divergence-free) functions.  Thus, we construct a Lipschitz
  approximation, which is also solenoidal.  As an application we
  revise the existence proof for non-stationary generalized Newtonian
  fluids of Diening, \Ruzicka{} and Wolf\cite{DieRuzWol10}. Since
  $\divergence \bfu_\lambda=0$, we are able to work in the pressure
  free formulation, which heavily simplifies the proof. We also
  provide a simplified approach to the stationary solenoidal Lipschitz
  truncation of Breit, Diening and Fuchs\cite{BreDieFuc12}.
\end{abstract}

\begin{keyword}
  Navier Stokes equations \sep unsteady flows \sep generalized
  Newtonian fluids \sep existence of weak solutions \sep solenoidal
  Lipschitz truncation
\end{keyword}

\begin{keyword}
  solenoidal Lipschitz truncation; divergence free
  truncation; Navier-Stokes; generalized Newtonian fluids

  \MSC  76B03 \sep 35D05 \sep 35J60 \sep 26B35
\end{keyword}

\end{frontmatter}

\section{Introduction}
\label{sec:intro}

The purpose of the Lipschitz truncation technique is to approximate a
Sobolev function~$u \in W^{1,p}$ by $\lambda$-Lipschitz
functions~$u_\lambda$ that coincide with~$u$ up to a set of small
measure. The functions $u_\lambda$ are constructed non-linearly by
modifying~$u$ on the level set of the Hardy-Littlewood maximal
function of the gradient~$\nabla u$.  This idea goes back to Acerbi
and Fusco\cite{AceF88}.  Lipschitz truncations are used in various
areas of analysis: calculus of variations, in the existence theory of
partial differential equations, and in the regularity theory. We refer
to~Ref.~\cite{DieMS08} for a longer list of references.

We are interested in the motion of incompressible fluids. The
balance of momentum reads in the stationary case as
\begin{align}
  \label{0.1}
  \Div \bfS =(\nabla \bfv)\bfv + \nabla \pi -\bff,
\end{align}
where $\bfv$ is the velocity, $(\nabla \bfv)\bfv:=(v_i \partial_i v_j
)_{1\leq j\leq n}$ denotes the convective term, $\bfS$ is the stress
deviator, $\pi$ the pressure, and $\bff$ is the external force. In
order to prescribe the properties of a special fluid one needs a
constitutive law which relates $\bfS$ and the symmetric gradient
$\ep(\bfv):=\frac{1}{2}\big(\nabla\bfv+\nabla \bfv^T\big)$ of the
velocity field.  The most common model for Non-Newtonian fluids
is\cite{AstMar74,BirArmHas87}
\begin{align}
  \label{0.2}
  \bfS =(\kappa+|\ep(\bfv)|)^{p-2}\ep(\bfv),
\end{align}
where $\kappa\geq0$ and $1<p<\infty$. Such fluids are sometimes called
generalized Newtonian fluids. From the mathematical point of view this
problem was firstly investigated by
Ladyzhenskaya\cite{Lad67,Lad68,Lad69} and Lions\cite{Lio69} in the
late sixties.  The existence of a weak solution $\bfv\in
W^{1,p}_{0,\Div}(\Omega)$ to (\ref{0.1})-(\ref{0.2}) is today quite
standard provided $p>\tfrac{3n}{n+2}$.  Here the solution is an
admissible test function to the weak equation and one can directly
apply the monotone operator theory. For smaller values of $p$ the
Lipschitz truncation was firstly used in the fluid context in
Ref.~\cite{FreMS03}, the existence of a weak solution was shown
provided $p>\tfrac{2n}{n+2}$. The idea is to rewrite $(\nabla
\bfv)\bfv$ as $\Div(\bfv\otimes\bfv)$ (using $\Div\bfv=0$) and apply
the Lipschitz truncation to our test function.  The technique was
improved in Ref.~\cite{DieMS08}, where also electro-rheological
fluids were considered. In this paper the following estimate is shown:
\begin{align*}
  \|\nabla \bfu_\lambda\chi_{\set{\bfu\neq\bfu_\lambda}}\|_p\leq
  \delta(\lambda),
\end{align*}
where $\delta(\lambda)\rightarrow0$ for a suitable sequence
$\lambda\rightarrow\infty$. This implies the convergence
$\bfu_\lambda\rightarrow\bfu$ (for $\lambda\rightarrow\infty$) in
$W^{1,p}$ which does not follow from the results of
Acerbi-Fusco\cite{AceF88}.

For the system (\ref{0.1}) it is often convenient to work with the so
called pressure free formulation. This is achieved by the use of
solenoidal (i.e. divergence-free) test functions, since they are
orthogonal to the pressure gradient. The problem of the standard
Lipschitz truncation is, that it does not preserve the solenoidal
property.  The easiest strategy to overcome this deficit is to correct
the functions $\bfu_\lambda$ by means of the \Bogovskii{} operator.
This operator works nice in the uniform convex setting, i.e. on $L^p$
with $1<p<\infty$. However, it cannot be used in the non-uniform
convex setting, e.g. $L^1$, $L^\infty$ or $L^h$ with $h(t)=t
\ln(1+t)$, since the \Bogovskii{} correction is a singular integral
operator. So in the limit cases the \Bogovskii{}-corrected Lipschitz
truncation loses some of its important fine properties. This is
particular the case in the setting of Prandtl-Eyring
fluids\cite{Eyr36}, where the constitutive relation reads as
\begin{equation}
  \label{0.3'}\bfS=\frac{\log(1+|\ep(\bfv)|)}{|\ep(\bfv)|}\ep(\bfv).
\end{equation}
To overcome these problems one needs a solenoidal Lipschitz
truncation.  Therefore in Ref.~\cite{BreDieFuc12} a truncation
method was developed which allows to approximate $\bfu\in
W^{1,p}_{\Div}(\Omega)$ by a solenoidal Lipschitz function
$\bfu_\lambda$ without losing the fine properties of the truncation.

Now, let us turn to the parabolic problem: the balance of momentum
reads as
\begin{align}
  \label{0.4}
 -\partial_t\bfu+ \Div \bfS =(\nabla \bfu)\bfu + \nabla \pi -\bff,
\end{align}
and all involved quantities are defined on the parabolic cube
$Q:=(0,T)\times\Omega$.  Here the situation is much more delicate
since the distributional time derivative $\partial_t\bfv$ interacts
with the pressure which also only exists in the sense of
distributions. In Ref.~\cite{DieRuzWol10} it is shown how to get a
weak solution to (\ref{0.4}) provided $p>\tfrac{2n}{n+2}$.  This is
based on a parabolic Lipschitz truncation and a deep understanding of
the pressure. Further results about parabolic Lipschitz truncation are
due to Kinnunen-Lewis\cite{KinLew02}. In addition to the properties
one needs in the stationary setting we need to know what happens with
the term $\langle\partial_t\bfu,\bfu_\lambda\rangle$ (which appears if
one tests the equation with the truncated function $\bfu_\lambda$).
In Ref.~\cite{DieRuzWol10} it is shown that
\begin{align}
  \big|\langle\partial_t\bfu,\bfu_\lambda\rangle\big|\leq
  \delta(\lambda),\label{0.5}
\end{align}
where $\delta(\lambda)\rightarrow0$ if $\lambda\rightarrow\infty$. The
main ingredients are a parabolic \Poincare{}-inequality and a suitable
scaling. The aim of this paper is to develop a Lipschitz truncation
$\bfu_\lambda$ for a function
$\bfu\in L^p( W^{1,p}_{0,\Div}(\Omega))$ which, in addition to the
properties in Refs.~\cite{DieRuzWol10,KinLew02}, is also solenoidal.

The main motivation for doing so, is to develop an existence theory
for (non-stationary) generalized Newtonian fluids which completely
avoids the appearance of the pressure (note that this cannot be done
by a \Bogovskii{}-correction).  This heavily simplifies the existence
proof for generalized Newtonian fluids from Ref.~\cite{DieRuzWol10}, see
Section~\ref{sec:appl}. We expect that our approach will be useful in
the investigation of electro-rheological fluids\cite{Ruz00} and
Prandtl-Eyring fluids\cite{Eyr36}. In this situations it is not
possible to reconstruct the pressure in the right spaces. So the
standard Lipschitz truncation approach will fail.

In this paper we will construct a solenoidal Lipschitz truncation.
Let $\bfu\in L^p(I, W^{1,p}_{0,\Div}(\Omega))$ be a function with
\begin{align*}
  \int_{Q} \partial_t \bfu \cdot \bfxi \dxt &= \int_{Q} \bfG :
  \nabla \bfxi \dxt \qquad \text{for all } \bfxi \in
  C^\infty_{0,\divergence}(Q),
\end{align*}
where $C^\infty_{0,\divergence}$ is the space of compactly supported,
smooth, solenoidal function.  Then there is a function $\bfu_\lambda$
with roughly the following properties (see Theorem~\ref{thm:ulwhole}
for a precise formulation).
\begin{enumerate}
\item $\nabla\bfu_\lambda\in  {L^\infty}$ with
  $ {\|\nabla\bfu_\lambda\|_\infty} \leq c\lambda$ and
  $\Div\,\bfu_\lambda=0$.
\item $\bfu_\lambda= \bfu$ a.e. outside a suitable set
  $\mathcal{O}_\lambda$ and
  \begin{align*}
     {
    \big|\langle\partial_t\bfu,\bfu_\lambda\rangle\big| +
    \|\chi_{\mathcal{O}_\lambda} \nabla \bfu_\lambda\|_p^p \leq
    c \lambda^p|\mathcal{O}_{\lambda}|} &\leq
    \delta(\lambda),
  \end{align*}
  with $\delta(\lambda)\rightarrow0$ if $\lambda\rightarrow\infty$.
\end{enumerate}
Let us explain the rough ideas of the construction and some
difficulties: We start with the open set $\mathcal{O}_\lambda$, where
the maximal functions of $\nabla\bfu$ or $\bfG$ is bigger than
$\lambda$.  Consider a Whitney decomposition of $\mathcal{O}_\lambda$
into cubes ${Q_i}$ with a special parabolic scaling. Let ${\phi_i}$ be
a subordinate partition of unity. On $Q \setminus \mathcal{O}_\lambda$
the gradient $\nabla\bfu_\lambda$ is already bounded, so we need to
change the function only on $\mathcal{O}_\lambda$. In
Refs.~\cite{DieRuzWol10,KinLew02} this is done via the following
construction with mean values $\bfu_i=\langle\bfu\rangle_{Q_i}$.
\begin{align*}
  \bfu_\lambda &:=
  \begin{cases}
    \bfu & \qquad \text{on $Q \setminus \mathcal{O}_\lambda$},
    \\
    \sum_{i} \phi_i \bfu_i &\qquad \text{on $
      Q \cap \mathcal{O}_\lambda$}.
  \end{cases}
\end{align*}
Of course $\bfu_\lambda$ is not solenoidal in general. So the first idea is to set
\begin{align*}
  \bfu_\lambda &:=
  \begin{cases}
    \bfu & \qquad \text{on $Q \setminus \mathcal{O}_\lambda$},
    \\
    \curl\Big(\sum_{i} \phi_i \Pi_i \big(\curl^{-1}\bfu\big)\Big)
    &\qquad \text{on $ Q \cap \mathcal{O}_\lambda$},
  \end{cases}
\end{align*}
where $\Pi_i$ is a local linear approximation. This approach is very
useful in the stationary context. It simplifies the construction of a
solenoidal Lipschitz truncation from Ref.~\cite{BreDieFuc12}, which
was based on local \Bogovskii{} projections.  We present this new
approach in Section~\ref{sec:sol_stat}.

However in the non-stationary situation we are confronted with the
following problem: The $L^\infty$-estimates for $\nabla\bfu_\lambda$
require a parabolic \Poincare{} inequality.  This needs an information
about the distributional time-derivative which is connected to the
pressure via the equation of motion (see (\ref{0.4})). It is not
enough to control $\partial_t\bfu$ as a functional on the solenoidal
test-functions. So the construction above leads to a solenoidal
Lipschitz truncation where its time derivative still needs information
about the pressure and is therefore not very useful. The main problem
in our construction is to overcome this difficulty which needs a deep
understanding of the equation, especially the properties of the time
derivative. 

The new solenoidal Lipschitz truncation can be found in
Section~\ref{sec:soltrunc}. In Section~\ref{sec:appl} we revisit the
existence proof for non-stationary motion of generalized Newtonian
fluids in order to present how useful this approximation is.

\section{Solenoidal truncation -- evolutionary case}
\label{sec:soltrunc}

In this section we examine solenoidal functions, whose time
derivative is only well defined via the duality with solenoidal
test functions.

Let $\Qz= \Iz \times \Bz \subset \setR \times \Rdr$ be a space time
cylinder.  Let $\bfu \in L^\sigma(\Iz,
W^{1,\sigma}_{\divergence}(\Bz))$ and $\bfG \in L^\sigma(\Qz)$ satisfy
\begin{align}
  \label{eq:uH}
  \int_{\Qz} \partial_t \bfu \cdot \bfxi \dxt &= \int_{\Qz} \bfG :
  \nabla \bfxi \dxt \qquad \text{for all } \bfxi \in
  C^\infty_{0,\divergence}(\Qz),
\end{align}
where we use the subscript $_\divergence$ to denote the subspace of
solenoidal functions.  The goal of this section is to construct a
solenoidal truncation $\bfu_\lambda$ of $\bfu$ which preserves the
properties of the truncation in Refs.~\cite{KinLew02,DieRuzWol10}.
In these papers, equation~\eqref{eq:uH} is valid for all $\bfxi \in
C^\infty_0(Q)$, so one has more control of the time
derivative~$\partial_t \bfu$. This extra control allows to derive a
parabolic \Poincare{} estimate for $\bfu$ in terms of $\nabla \bfu$
and $\bfG$, see Theorem~B.1 of Ref.~\cite{DieRuzWol10}.

In our situation we are confronted with the problem, that the time
derivative~$\partial_t \bfu$ is only defined as a functional on
solenoidal test functions. Therefore, we have not enough control
of~$\partial_t \bfu$ to derive a parabolic \Poincare{} estimate.  This
problem was overcome in Ref.~\cite{DieRuzWol10} by introducing a
pressure in~\eqref{eq:uH}. This pressure splits into a pressure
related to~$\bfG$ and a time derivative of a harmonic pressure related
to $\partial_t\bfu$. Then the sum of $\bfu$ and the harmonic pressure
solves a system of the form~\eqref{eq:uH} for all test
functions~$\bfxi \in C^\infty_0(Q)$. The truncation technique is then
applied to this sum. We want to avoid the introduction of the
pressure, since it is very inflexible and complicates the application
of the truncation method.

Because $\bfu$ and $\bfxi$ are both solenoidal in~\eqref{eq:uH}, they
can both be written as the curl of a vector field. This will allow us
to rewrite~\eqref{eq:uH} as a system that is valid for all functions.
Since the definition of the curl operator depends on the dimension, we
will restrict ourselves in the following for simplicity to the
case~$n=3$. 

Let us be more precise. First we extend our function $\bfu$ in a suitable
way on the whole space and then apply the inverse curl operator.

Let $\gamma \in C_0^\infty(\Bz)$ with$ \chi_{\frac 12 \Bz} \leq \gamma
\leq \chi_{\Bz}$, where $\Bz$ is a ball. Here we use the convention
that $\frac 12 \Bz$ is the scaled ball with the same center (same for
cylinders).  Let $A$ denote the annulus $\Bz \setminus \frac 12 \Bz$.
Then according to Refs.~\cite{Bog80,DieRS10} there exists a \Bogovskii{}
operator $\Bog_A$ from\footnote{$C^\infty_{0,0}$ is the subspace of
  $C^\infty_0$ whose elements have mean value zero.}
$C^\infty_{0,0}(A) \to C^\infty_0(A)$ which is bounded from $L^q_0(A)
\to W^{1,q}_0(A)$ for all $q \in (1,\infty)$ such that $\divergence
\Bog_A = \identity$.  Define
\begin{align*}
  \tilde{\bfu} &:= \gamma \bfu - \Bog_A( \divergence(\gamma \bfu)) =
  \gamma \bfu - \Bog_A( \nabla\gamma \cdot \bfu).
\end{align*}
Then $\divergence \tilde{\bfu}=0$ on $\Iz \times \Bz$ and
$\tilde{\bfu}(t) \in W^{1,\sigma}_0(\Bz)$, so we can extend
$\tilde{\bfu}$ by zero in space to $\tilde{\bfu} \in L^\sigma(I,
W^{1,\sigma}_{\divergence}(\Rdr))$.  Since $\tilde{\bfu}=\bfu$ on $I
\times \frac 12 \Bz$, we have
\begin{align}
  \label{eq:uH2}
  \int_{\Qz} \partial_t \tilde{\bfu} \cdot \bfxi \dxt &= \int_{\Qz}
  \bfG : \nabla \bfxi \dxt \qquad \text{for all } \bfxi \in
  C^\infty_{0,\divergence}(\tfrac 12\Qz).
\end{align}

On the space $W^{1,\sigma}_{\divergence}(\setR^3)$ with $\sigma>1$ we
define the inverse curl operator $\curl^{-1}$ by
\begin{align*}
  \curl^{-1} \bfg := \curl(\Delta^{-1} \bfg) := \curl\bigg(\int_\Rdr
  \frac{-1}{4\pi \abs{x-y}} \bfg(y)\dy\bigg).
\end{align*}
The definition is correct, as in the sense of distributions
\begin{align*}
  \begin{aligned}
    \curl (\curl^{-1} \bfg) &= \curl \curl (\Delta^{-1} \bfg) =
    (-\Delta + \nabla \divergence) \Delta^{-1} \bfg
    \\
    &= \bfg + \nabla \divergence \bigg( \frac{-1}{4 \pi \abs{\cdot}}
    \ast \bfg\bigg)
    \\
    &= \bfg + \nabla \bigg( \frac{-1}{4 \pi \abs{\cdot}} \ast
    \divergence \bfg\bigg)
    \\
    &= \bfg,
  \end{aligned}
\end{align*}
where we used $\divergence \bfg = 0$ in the last step. Moreover,
\begin{align}
  \label{eq:curlmdivfree}
  \divergence (\curl^{-1} \bfg) &= \divergence \curl (\Delta^{-1}
  \bfg) = 0.
\end{align}
Since $\bfg
\mapsto \nabla^2 (\Delta^{-1} \bfg)$ is a singular integral operator,
we have
\begin{align}
  \label{eq:curl1}
  \norm{\nabla \curl^{-1} \bfg}_s \leq \norm{\nabla^2
    (\Delta^{-1}\bfg)}_s \leq c_s\, \norm{\bfg}_s
\end{align}
for $s \in (1,\infty)$. Analogously, we have
\begin{align}
  \label{eq:curl2}
  \norm{\nabla^2 \curl^{-1} \bfg}_s \leq c_s\, \norm{\nabla \bfg}_s
\end{align}
for $s \in (1,\infty)$.

Now, we define pointwise in time
\begin{align*}
  \bfw := \curl^{-1} (\tilde{\bfu}) = \curl^{-1}\big(   \gamma \bfu -
  \Bog_{\Bz \setminus \frac 12 \Bz}( \nabla\gamma \cdot \bfu) \big).
\end{align*}
Overall, we get the following lemma.
\begin{lemma}
  \label{lem:defw}
  We have 
  \begin{alignat*}{2}
    \curl \bfw &= \tilde{\bfu} = \bfu &\qquad&\text{on $\tfrac 12 \Qz$}
    \\
    \divergence \bfw &=0&\qquad&\text{on $\Rdr$}
  \end{alignat*}
  and
  \begin{align*}
    \norm{\bfw(t)}_{L^s(\Rdr)}&\leq c_s\,
    \norm{\tilde{\bfu}(t)}_{ {L^a}(\Bz)}
    \\
    \norm{\nabla \bfw(t)}_{L^s(\Rdr)} &\leq c_s\,
    \norm{\tilde{\bfu}(t)}_{L^s(\Bz)}
    \\
    \norm{\nabla^2 \bfw(t)}_{L^s(\Rdr)} &\leq c_s\, \norm{\nabla
      \tilde{\bfu}(t)}_{L^s(\Bz)}.
  \end{align*}
  for  {$a=\max\{1,\frac{3s}{3+s}\}$}, $t \in I$ and $s \in (1,\infty)$.
\end{lemma}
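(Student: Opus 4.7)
The plan is to verify the two algebraic identities first and then establish the three norm estimates in order of increasing difficulty. For the first identity $\curl \bfw = \tilde{\bfu}= \bfu$ on $\tfrac12\Qz$, I would first check that $\tilde{\bfu}$ is divergence-free, so that the formal identity $\curl(\curl^{-1}\tilde{\bfu})=\tilde{\bfu}$ established in the paragraph preceding the lemma applies. Using $\divergence \Bog_A = \mathrm{id}$ and $\divergence \bfu = 0$ on $\Bz$, a direct calculation gives $\divergence \tilde{\bfu} = \gamma\,\divergence\bfu + \nabla\gamma\cdot\bfu - \nabla\gamma\cdot\bfu = 0$ on $\Bz$; since $\tilde{\bfu}(t)\in W^{1,\sigma}_0(\Bz)$, the zero extension to $\Rdr$ remains divergence-free. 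Moreover $\gamma \equiv 1$ on $\tfrac12\Bz$ and $\supp \Bog_A(\nabla\gamma\cdot\bfu)\subset \overline{A}$, so $\tilde{\bfu}=\bfu$ on $\tfrac12\Bz$, completing the first identity. The second identity $\divergence\bfw=0$ on $\Rdr$ is exactly \eqref{eq:curlmdivfree}.

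For the two gradient estimates, it suffices to invoke the Calderón--Zygmund bounds \eqref{eq:curl1} and \eqref{eq:curl2} with $\bfg = \tilde{\bfu}$; because $\supp\tilde{\bfu}\subset \Bz$, the right-hand sides reduce to $\norm{\tilde{\bfu}(t)}_{L^s(\Bz)}$ and $\norm{\nabla \tilde{\bfu}(t)}_{L^s(\Bz)}$, respectively.

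The zero-order estimate is the only nontrivial one. Pointwise $|\bfw(x)| \leq c\int_{\Bz}|\tilde{\bfu}(y)|\,|x-y|^{-2}\,dy$, so $\bfw$ is dominated by the Riesz potential $I_1|\tilde{\bfu}|$. When $s>3/2$ the exponent $a=3s/(3+s)$ lies in $(1,3)$ and the Hardy--Littlewood--Sobolev inequality on $\Rdr$ yields $\norm{\bfw(t)}_{L^s(\Rdr)}\leq c_s\norm{\tilde{\bfu}(t)}_{L^a(\Bz)}$ at once. The main obstacle is the low range $s\in(1,3/2]$ where $a=1$ and the endpoint of HLS fails. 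To handle it I would split $\Rdr = 2\Bz \cup (\Rdr\setminus 2\Bz)$. On the near-field $2\Bz$, Young's convolution inequality with the locally $L^s$-kernel $\chi_{4\Bz}(\cdot)|\cdot|^{-2}$ gives $\norm{\bfw(t)}_{L^s(2\Bz)}\leq c\norm{\tilde{\bfu}(t)}_{L^1(\Bz)}$ for $s<3/2$, with the endpoint $s=3/2$ recovered by interpolation between $s=1+\varepsilon$ and some $s>3/2$. On the far-field I would exploit that a divergence-free compactly supported vector field has zero integral (integrate $\divergence\tilde{\bfu}=0$ against each coordinate function $x_j$ and use compact support), so Taylor expanding the Newtonian kernel $|x-y|^{-1}= |x|^{-1}+O(|y|/|x|^2)$ annihilates the leading term and produces $|\bfw(x)|\leq c\,r_{\Bz}\,\norm{\tilde{\bfu}}_{L^1}\,|x|^{-3}$ for $|x|\geq 2 r_{\Bz}$, which lies in $L^s(\Rdr\setminus 2\Bz)$ for every $s>1$. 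Combining the two regions closes the low-exponent case and completes the lemma.
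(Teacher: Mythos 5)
Your verification of $\divergence\tilde{\bfu}=0$, of the identification $\tilde{\bfu}=\bfu$ on $\frac12\Bz$ via $\gamma\equiv 1$ there and $\support \Bog_A(\cdot)\subset \overline{A}$, and the reduction of the two gradient bounds to the Calder\'on--Zygmund estimates~\eqref{eq:curl1}--\eqref{eq:curl2} are all correct and are precisely what the paper tacitly relies on (the paper offers no separate proof of the lemma). For the zero-order bound, the HLS argument in the range $s>3/2$ and your near-field/far-field split for $s<3/2$ are sound; in particular the observation that $\divergence\tilde{\bfu}=0$ and compact support force $\int\tilde{\bfu}=0$, giving one extra power of decay $|\bfw(x)|\lesssim r_{\Bz}\norm{\tilde{\bfu}}_1|x|^{-3}$ at infinity, is exactly the right way to treat the far field for small~$s$.

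The one genuine gap is the endpoint $s=3/2$, $a=1$. The interpolation you propose between $T\colon L^1\to L^{1+\varepsilon}$ (near-field Young) and $T\colon L^{a_2}\to L^{s_2}$ with $s_2>3/2$, $a_2=\frac{3s_2}{3+s_2}>1$ (HLS) only produces, for target exponent $3/2$, source exponents $a_\theta>1$; it never reaches $a=1$. Nor can Young be used directly, since $\chi_{4\Bz}|\cdot|^{-2}\notin L^{3/2}$, and the Lorentz-refined Young inequality only yields the weak space $L^{3/2,\infty}$. The strong $L^1\to L^{3/2}$ mapping at this endpoint is exactly the place where the full $\divergence$-free structure must be used, not merely the vanishing mean: by the Bourgain--Brezis/Van Schaftingen estimate $\bigl|\int \bfg\cdot\bfphi\,\dx\bigr|\leq c\,\norm{\bfg}_1\norm{\nabla\bfphi}_n$ for $\divergence\bfg=0$, one gets by duality $\norm{\nabla\Delta^{-1}\bfg}_{n/(n-1)}\leq c\,\norm{\bfg}_1$, which for $n=3$ is the claimed $L^{3/2}$-bound for $\bfw=\curl\Delta^{-1}\tilde{\bfu}$. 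With that replacement the argument closes; as written, the borderline case $s=3/2$ is not established.
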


Let us derive from~\eqref{eq:uH} the equation for $\bfw$. For $\bfpsi
\in C^\infty_0(\tfrac 12 \Qz)$ we have
\begin{align*}
  \int_{\Qz} \partial_t \bfu \cdot \curl \bfpsi \dxt &= \int_{\Qz}
  \bfG : \nabla \curl \bfpsi \dxt.
\end{align*}
We use $\bfu = \curl \bfw$ and partial integration to get
\begin{align*}
  \int_{\Qz} \partial_t \bfw \cdot \curl \curl \bfpsi \dxt &=
  \int_{\Qz} \bfG : \nabla \curl \bfpsi \dxt.
\end{align*}
Now, because
\begin{align*}
  \int_{\Qz} \bfw \cdot \partial_t\nabla \divergence\bfpsi \dx\dt =
  \int_{\Qz} \divergence \bfw\,\partial_t \divergence\bfpsi \dxt=0
\end{align*}
and $\curl \curl \bfpsi = -\Delta \bfpsi+ \nabla \divergence \bfpsi$ we
gain
\begin{align}
  \label{eq:wttmp}
  \int_{\Qz} \bfw\cdot \partial _t \Delta\bfpsi \dxt &= -
  \int_{\Qz} {\bfG}: \nabla \curl\bfpsi \dxt.
\end{align}
for every $\bfpsi \in C^{\infty}_{0}(\tfrac 12 \Qz)$.  We can rewrite
this as
\begin{align}
  \label{eq:wt}
  \int_{\Qz} \bfw\cdot \partial _t \Delta\bfpsi \dxt &= - \int_{\Qz}
  {\bfH}: \nabla^2\bfpsi \dxt,
\end{align}
with $\abs{\bfG} \sim
\abs{\bfH}$ pointwise.  In particular, in the sense of distributions
we have $\partial_t \Delta \bfw = -\curl \divergence \bfG = -
\divergence \divergence \bfH$.

So in passing from $\bfu$ to $\bfw$ we got a system valid for all test
functions~$\bfpsi \in C^\infty_0(\Qz)$. However, we only get control
of $\partial_t \Delta \bfw$, so that the time derivative of the harmonic
part of~$\bfw$ cannot be seen. Hence, a parabolic \Poincare{}
inequality for~$\bfw$ still does not hold; i.e. $\partial_t \bfw$ is
not controlled! In order to remove this invariance we will
replace~$\bfw$ by some function~$\bfz$ such that $\partial_t
\Delta \bfw = \partial_t \Delta \bfz$. This will imply that
$\partial_t \bfz$ can be controlled by~$\bfH$. To define $\bfz$
conveniently we need some auxiliary results.

For a ball $B' \subset \Rdr$ and a function $f \in L^s(B')$ we define $\Delta^{-2}_{B'} \Delta f$ as the weak solution $F \in
W^{2,s}_0(B')$ of
\begin{align}
  \label{eq:defbi2}
  \int_{B'}\Delta F\Delta\phi\dx &= \int_{B'}f\Delta\phi\dx \qquad
  \text{for all }\phi \in C^\infty_0(B').
\end{align}
Then $f - \Delta (\Delta^{-2}_{B'} \Delta f)$ is harmonic on~$B'$.

According to Ref.~\cite{Mul95} and Lemma~2.1 of
Ref.~\cite{Wol07} we have the following variational
estimate.
\begin{lemma}
  \label{lem:mueller}
  Let $s \in (1,\infty)$. Then for all $g \in W^{2,s}_0(B')$ we have
  \begin{align*}
    \norm{\nabla^2 g}_s &\leq c_s \sup_{\substack{\phi \in C^\infty_0(B')
        \\ \norm{\nabla^2 \phi}_{s'} \leq 1}} \int_{B'} \Delta g \Delta
    \phi \dx
  \end{align*}
\end{lemma}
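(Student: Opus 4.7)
The natural approach is a duality argument combined with biharmonic regularity. The starting point is the integration-by-parts identity
\begin{align*}
  \int_{B'} \Delta g \, \Delta \phi \dx = \int_{B'} \nabla^2 g : \nabla^2 \phi \dx,
\end{align*}
valid for all $g \in W^{2,s}_0(B')$ and $\phi \in W^{2,s'}(B')$. It follows from integrating by parts twice; the boundary terms all vanish because $g \in W^{2,s}_0(B')$ forces both $g$ and $\nabla g$ to have zero trace on $\partial B'$. The case $g, \phi \in C^\infty_0(B')$ is a direct computation and the general case follows by density.

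By $L^s$--$L^{s'}$ duality on symmetric tensor fields,
\begin{align*}
  \norm{\nabla^2 g}_s = \sup_{\Psi \in L^{s'}(B'),\, \norm{\Psi}_{s'} \leq 1} \int_{B'} \nabla^2 g : \Psi \dx.
\end{align*}
For a fixed such $\Psi$, I would construct a companion $\phi$ so that the pairing $\int_{B'} \nabla^2 g : \Psi$ coincides with $\int_{B'} \Delta g \, \Delta \phi$ while $\norm{\nabla^2 \phi}_{s'}$ stays controlled by $\norm{\Psi}_{s'}$. Concretely, consider the biharmonic variational problem of finding $\phi \in W^{2,s'}_0(B')$ with
\begin{align*}
  \int_{B'} \nabla^2 \phi : \nabla^2 \eta \dx = \int_{B'} \Psi : \nabla^2 \eta \dx \quad \text{for every } \eta \in W^{2,s}_0(B').
\end{align*}
By the standard $L^{s'}$-theory for the biharmonic Dirichlet problem on smooth bounded domains (the essence of the cited results of M\"uller and Wolf), this problem admits a unique solution with $\norm{\nabla^2 \phi}_{s'} \leq c_s \norm{\Psi}_{s'}$, since the effective data $\Div\,\Div\,\Psi$ lies in $W^{-2,s'}(B')$ with the correct bound. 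Testing the variational identity against $\eta = g$ (which is admissible because $g \in W^{2,s}_0(B')$) and applying the integration-by-parts identity above yields
\begin{align*}
  \int_{B'} \nabla^2 g : \Psi \dx = \int_{B'} \nabla^2 \phi : \nabla^2 g \dx = \int_{B'} \Delta g \, \Delta \phi \dx.
\end{align*}

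To pass from $\phi \in W^{2,s'}_0(B')$ to a test function in $C^\infty_0(B')$, I approximate $\phi$ by a sequence $\phi_n \in C^\infty_0(B')$ with $\phi_n \to \phi$ in $W^{2,s'}$. Then $\int_{B'} \Delta g \, \Delta \phi_n \to \int_{B'} \Delta g \, \Delta \phi$ and $\norm{\nabla^2 \phi_n}_{s'} \to \norm{\nabla^2 \phi}_{s'} \leq c_s \norm{\Psi}_{s'}$, so
\begin{align*}
  \int_{B'} \nabla^2 g : \Psi \dx \leq c_s \norm{\Psi}_{s'} \sup_{\phi \in C^\infty_0(B'),\, \norm{\nabla^2 \phi}_{s'} \leq 1} \int_{B'} \Delta g \, \Delta \phi \dx.
\end{align*}
Taking the supremum over $\norm{\Psi}_{s'} \leq 1$ delivers the lemma.

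The main obstacle is the biharmonic regularity step itself: producing $\phi \in W^{2,s'}_0(B')$ with the stated $L^{s'}$-bound on $\nabla^2 \phi$ from data whose highest derivatives only live in $W^{-2,s'}$. This is a Calder\'on--Zygmund type estimate for the biharmonic Dirichlet problem, which is exactly what the quoted references supply. Once it is in hand, the two integration-by-parts identities and the $W^{2,s'}$-density of $C^\infty_0$ in $W^{2,s'}_0$ are routine.
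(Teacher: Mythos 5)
Your proof is correct and proceeds along exactly the route the paper defers to: after the integration-by-parts identity $\int_{B'} \Delta g\, \Delta \phi \dx = \int_{B'} \nabla^2 g : \nabla^2 \phi \dx$ (valid since $g$ and $\nabla g$ have zero trace) and $L^s$--$L^{s'}$ duality, the lemma reduces to the $W^{2,s'}_0$-solvability of the clamped biharmonic Dirichlet problem with data $\Div\Div\Psi$ together with the estimate $\|\nabla^2\phi\|_{s'}\le c_s\|\Psi\|_{s'}$, which is precisely what Ref.~\cite{Mul95} and Lemma~2.1 of Ref.~\cite{Wol07} provide. The paper itself offers no argument beyond the citation, so your reduction to the biharmonic $L^{s'}$-theory is exactly the content those references are invoked for.
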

This implies the following two corollaries:
\begin{corollary}
  \label{cor:mueller}
  Let $s \in (1,\infty)$. Then
  \begin{align*}
    \int_{B'} \bigabs{\nabla^2 (\Delta^{-2}_{B'} \Delta f)}^s \dx\leq c_s
    \sup_{\substack{\phi \in C^\infty_0(B') \\ \norm{\nabla^2
          \phi}_{s'} \leq 1}} \int_{B'} f \Delta \phi \dx \leq
    c_s\int_{B'}|f|^s\dx
  \end{align*}
  for $f \in L^s(B')$, where $c_s$ is independent of the ball~$B'$.
\end{corollary}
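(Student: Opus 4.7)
The plan is to set $g := \Delta^{-2}_{B'}\Delta f \in W^{2,s}_0(B')$ and apply Lemma~\ref{lem:mueller} directly to $g$. By the defining relation \eqref{eq:defbi2}, for every $\phi \in C^\infty_0(B')$ we have $\int_{B'} \Delta g\, \Delta\phi \dx = \int_{B'} f\, \Delta \phi \dx$. Substituting this identity into the right-hand side of Lemma~\ref{lem:mueller} immediately gives the first inequality:
\begin{align*}
  \norm{\nabla^2 g}_s \leq c_s \sup_{\substack{\phi \in C^\infty_0(B') \\ \norm{\nabla^2 \phi}_{s'} \leq 1}} \int_{B'} f\, \Delta \phi \dx.
\end{align*}

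For the second inequality I would use the pointwise bound $|\Delta \phi| \leq \sqrt{3}\, |\nabla^2 \phi|$, followed by H\"older's inequality, to estimate for any admissible $\phi$
\begin{align*}
  \int_{B'} f\, \Delta\phi \dx \leq \norm{f}_s \norm{\Delta\phi}_{s'} \leq \sqrt 3\, \norm{f}_s \norm{\nabla^2 \phi}_{s'} \leq \sqrt 3\, \norm{f}_s,
\end{align*}
and take the supremum over $\phi$.

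The only subtle point is the claim that $c_s$ is independent of the ball $B'$. I would handle this by a scaling argument: if $B' = B_r(x_0)$, the substitution $\phi(x) = \Phi((x-x_0)/r)$, $g(x) = G((x-x_0)/r)$, $f(x) = F((x-x_0)/r)$ maps the problem to the unit ball, and a direct check of the exponents of $r$ shows that both sides of the inequality in Lemma~\ref{lem:mueller} scale identically (namely like $r^{n/s - 2}$). Hence the constant from Lemma~\ref{lem:mueller} transfers without modification to arbitrary balls. There is no real obstacle here: the corollary is essentially a rewriting of Lemma~\ref{lem:mueller} using the definition of $\Delta^{-2}_{B'}\Delta$, plus a trivial H\"older estimate and a standard scaling observation.
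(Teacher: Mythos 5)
Your proof is correct and follows essentially the same route as the paper's (which is a one-line argument citing Lemma~\ref{lem:mueller}, the defining identity~\eqref{eq:defbi2}, and H\"older). You additionally spell out the scaling argument for the ball-independence of $c_s$, which the paper leaves implicit; this is a reasonable thing to include since Lemma~\ref{lem:mueller} as stated does not explicitly assert ball-independence, and your exponent bookkeeping is correct.
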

\begin{proof}
  The claim follows by Lemma~\ref{lem:mueller},
  \begin{align*}
    \int_{B'}\Delta (\Delta^{-2}_{B'} \Delta f)\Delta\phi\dx &=
    \int_{B'} f\Delta\phi\dx
  \end{align*}
  and H{\"o}lder's inequality.
\end{proof}
\begin{corollary}
  \label{cor:muellerhigh}
  Let $s \in (1,\infty)$, then 
  \begin{align*}
    \int_{\frac 23 B'} \bigabs{\nabla^3 (\Delta^{-2}_{B'} \Delta f)}^s
    \dx &\leq c_s\int_{B'}|\nabla f|^s\dx, &&\text{for $f \in
      W^{1,s}(B')$}
    \\
    \int_{\frac 23 B'} \bigabs{\nabla^4 (\Delta^{-2}_{B'} \Delta f)}^s
    \dx& \leq c_s\int_{B'}|\nabla^2 f|^s\dx, &&\text{for $f \in
      W^{2,s}(B')$},
  \end{align*}
  where $c_s$ is independent of the ball~$B'$.
\end{corollary}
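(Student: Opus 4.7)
The plan is to view $F := \Delta^{-2}_{B'}\Delta f$ as a solution of a biharmonic-type problem on $B'$ with zero Cauchy data, and apply standard interior elliptic regularity. The key observation is that $h := f - \Delta F$ is harmonic on $B'$: this follows directly from \eqref{eq:defbi2}, since $\int_{B'}(f-\Delta F)\Delta\phi\dx = 0$ for every $\phi \in C^\infty_0(B')$. Consequently, the map $f \mapsto F$ annihilates harmonic polynomials: if $f$ is harmonic then Lemma~\ref{lem:mueller} applied to the defining relation yields $\nabla^2 F = 0$, which together with $F \in W^{2,s}_0(B')$ forces $F \equiv 0$. In particular $F$ is unchanged by subtracting from $f$ either $\langle f\rangle_{B'}$ (for the first estimate) or its best affine approximation on $B'$ (for the second).

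After such a subtraction, \Poincare{} on $B'$ (of radius $r$) yields $\|f\|_{L^s(B')}\leq c\,r\,\|\nabla f\|_{L^s(B')}$ in the first case and $\|f\|_{L^s(B')}+r\|\nabla f\|_{L^s(B')}\leq c\,r^{2}\|\nabla^{2}f\|_{L^s(B')}$ in the second. Corollary~\ref{cor:mueller} then bounds $\|\Delta F\|_{L^s(B')}$ (hence $\|h\|_{L^s(B')}$) by the appropriate seminorm of $f$; interior estimates for harmonic functions upgrade this to $\|\nabla^k h\|_{L^s(\frac{3}{4}B')}\leq c\,r^{-k}\|h\|_{L^s(B')}$ for every $k$. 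Applying \Poincare{} twice to $F\in W^{2,s}_0(B')$ and invoking Corollary~\ref{cor:mueller} gives $\|F\|_{L^s(B')}\leq c\,r^{2}\|\nabla^{2}F\|_{L^s(B')}\leq c\,r^{2}\|f\|_{L^s(B')}$. Plugging all of these into the standard interior regularity estimate for $\Delta F = f-h$,
\begin{align*}
  \|\nabla^{k+2}F\|_{L^s(\frac{2}{3}B')} \leq c\,\bigl(\|\nabla^k(f-h)\|_{L^s(\frac{3}{4}B')} + r^{-(k+2)}\|F\|_{L^s(\frac{3}{4}B')}\bigr),
\end{align*}
with $k=1$ and $k=2$, produces the two desired bounds; the powers of $r$ cancel exactly by scaling.

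The principal difficulty is purely bookkeeping: verifying independence of the constants on $B'$ and tracking the correct powers of $r$. Both are handled by rescaling to the unit ball, where all ingredient inequalities (\Poincare{}, interior harmonic estimates, interior Poisson regularity, Corollary~\ref{cor:mueller}) have scale-invariant forms, so the final estimates transfer back to arbitrary $B'$ with $B'$-independent constants. The shrinkage from $B'$ to $\frac{2}{3}B'$ on the left is exactly what leaves room for the interior estimates applied to the harmonic function $h$ and to $F$.
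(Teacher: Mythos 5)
Your argument is correct and yields both estimates with the right dependence on $B'$. The route you take differs somewhat from the paper's. The paper invokes "standard interior regularity theory (difference quotients, localization and \Poincare{})" applied directly to the fourth-order variational problem~\eqref{eq:defbi2} together with Corollary~\ref{cor:mueller}: one takes difference quotients of the identity $\int\Delta F\,\Delta\phi=\int f\,\Delta\phi$, inserts cut-offs, and feeds the localized equation back into the $\norm{\nabla^2\cdot}_s$ variational bound of Lemma~\ref{lem:mueller}. You instead exploit the explicit harmonic decomposition $\Delta F = f-h$ (with $h$ harmonic, as the paper itself notes after~\eqref{eq:defbi2}), observe that the map $f\mapsto F$ annihilates harmonic polynomials so that you may freely subtract the mean, respectively the best affine approximant, from $f$, and then apply (i) \Poincare{} on $B'$, (ii) Corollary~\ref{cor:mueller} to control $\norm{\Delta F}_s$, hence $\norm{h}_s$, (iii) interior estimates for the harmonic $h$, and (iv) interior $L^s$ estimates for the second-order Poisson problem $\Delta F = f-h$. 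This replaces the difference-quotient machinery for the biharmonic operator by Poisson regularity for the Laplacian, which is more elementary and arguably cleaner; the trade-off is that it leans on the a priori knowledge that $f-\Delta F$ is harmonic, whereas the difference-quotient argument would go through for any operator in divergence form. Both methods are scale-invariant, so the $B'$-independence of the constants comes for free, and the powers of $r$ cancel exactly as you indicate.

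One small remark: when you claim $\int_{B'} f\,\Delta\phi\dx=0$ for harmonic $f$, this uses two integrations by parts, which is innocuous here since you only apply it to globally smooth harmonic polynomials (constants and affine functions), but worth saying explicitly; for a merely $W^{2,s}$ harmonic $f$ on $B'$ the vanishing still holds by Weyl's lemma and density, but the clean case is all you need.
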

\begin{proof}
  The claim follows from Corollary~\ref{cor:mueller} by standard
  interior regularity theory (difference quotients, localization and
  \Poincare{}).
\end{proof}
 {
  For $V \in L^s(B')$ we define $\Delta^{-2}_{B'} \Div\Div V$ as the
  weak solution $F \in W^{2,s}_0(B')$ of
  \begin{align*}
    \int_{B'}\Delta F\Delta\phi\dx &= \int_{B'}V\nabla^2\phi\dx \qquad
    \text{for all }\phi \in C^\infty_0(B').
  \end{align*}
  Similar to Corollary~\ref{cor:mueller} we get the following result.
\begin{corollary}
  \label{cor:divdiv}
 Let $s \in (1,\infty)$, then 
  \begin{align*}
    \int_{B'} \bigabs{\nabla^2 (\Delta^{-2}_{B'} \Div\Div V)}^s \dx \leq
    c_s\int_{B'}|V|^s\dx
  \end{align*}
  for $V \in L^s(B')$, where $c_s$ is independent of
  the ball~$B'$.
\end{corollary}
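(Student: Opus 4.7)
The proof will mirror that of Corollary~\ref{cor:mueller} almost verbatim, with the right-hand side $\int_{B'} f \Delta \phi \dx$ replaced by $\int_{B'} V \cdot \nabla^2 \phi \dx$. The plan is to feed $F := \Delta^{-2}_{B'}\Div\Div V \in W^{2,s}_0(B')$ into the variational estimate of Lemma~\ref{lem:mueller}, use the defining relation for $F$ to trade $\Delta F \Delta \phi$ for $V : \nabla^2 \phi$, and close with H\"older's inequality in the pairing of $V \in L^s$ against $\nabla^2 \phi \in L^{s'}$.

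More precisely, I would first apply Lemma~\ref{lem:mueller} to $g = F$, yielding
\begin{equation*}
  \norm{\nabla^2 F}_{s}
  \;\leq\; c_s \sup_{\substack{\phi \in C^\infty_0(B') \\ \norm{\nabla^2 \phi}_{s'}\leq 1}} \int_{B'} \Delta F \, \Delta \phi \dx.
\end{equation*}
For each admissible $\phi$, the defining identity of $\Delta^{-2}_{B'}\Div\Div V$ (with this very $\phi$) gives
\begin{equation*}
  \int_{B'} \Delta F \, \Delta \phi \dx \;=\; \int_{B'} V : \nabla^2 \phi \dx.
\end{equation*}
Then H\"older's inequality bounds the right-hand side by $\norm{V}_s \norm{\nabla^2 \phi}_{s'} \leq \norm{V}_s$. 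Taking the supremum over $\phi$ and raising to the $s$-th power yields the claim.

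The only subtle point is the independence of $c_s$ from the ball $B'$, but this is already built into Lemma~\ref{lem:mueller} (it is a scaling-invariant $L^s$-bound for the bi-Laplacian with homogeneous Dirichlet data) and is inherited by our argument verbatim, exactly as in the proof of Corollary~\ref{cor:mueller}. No obstacle should appear since the argument is purely a duality pairing; unlike Corollary~\ref{cor:muellerhigh}, no difference quotients or interior regularity are needed, because we are only asked to bound $\nabla^2 F$ on all of $B'$ and $F$ already lives in $W^{2,s}_0(B')$ by construction.
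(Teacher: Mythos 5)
Your proof is correct and is exactly the argument the paper intends: the paper states only that Corollary~\ref{cor:divdiv} follows ``similar to Corollary~\ref{cor:mueller},'' and your chain of Lemma~\ref{lem:mueller}, the defining variational identity $\int_{B'}\Delta F\,\Delta\phi\dx = \int_{B'}V:\nabla^2\phi\dx$, and H\"older reproduces the proof of Corollary~\ref{cor:mueller} verbatim with the right-hand side swapped as you describe.
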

}
 The next lemma shows the wanted control of the time derivative.
\begin{lemma}
  \label{lem:zi}
  For a cube $Q'= I' \times B' \subset \Qz$ let $\zQp
  := \Delta \Delta_{B'}^{-2} \Delta \bfw$.  Then for $s \in
  (1,\infty)$ we have 
  \begin{align*}
    \dashint_{Q'} \abs{\zQp}^s\dx &\leq c_s\, \dashint_{Q'}
    \abs{\bfw}^s\dx
    \\
    \dashint_{ { Q'}} \biggabs{\frac{\zQp}{r'}}^s\dx +
    \dashint_{\frac 23 Q'} \abs{\nabla \zQp}^s\dx &\leq c_s\,
    \dashint_{ Q'} \abs{\nabla \bfw}^s\dx
    \\
    \dashint_{ {Q'}} \biggabs{ \frac{\zQp}{(r')^2}}^s\dx +
    \dashint_{\frac 23 Q'} \biggabs{ \frac{\nabla \zQp}{r'}}^s\dx
    + \dashint_{\frac 23 Q'} \abs{\nabla^2\zQp}^s\dx &\leq c_s\,
    \dashint_{Q'} \abs{\nabla^2 \bfw}^s \dx,
    \\
    \dashint_{Q'}|\partial_t \zQp|^s \dxt &\leq c_s\,
    \dashint_{Q'}|\bfH|^s \dxt,
  \end{align*}
  where $r':=r_{B'}$.
\end{lemma}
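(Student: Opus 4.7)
The plan is to exploit that $\zQp = \Delta F$ with $F = \Delta_{B'}^{-2}\Delta\bfw \in W^{2,s}_0(B')$ (for a.e.\ $t \in I'$), to reduce all spatial estimates to Corollaries~\ref{cor:mueller} and~\ref{cor:muellerhigh} via the pointwise bound $|\Delta F| \leq c\,|\nabla^2 F|$, and to reduce the time-derivative estimate to Corollary~\ref{cor:divdiv} by identifying the equation solved by $\partial_t F$ using~\eqref{eq:wt}. All four claims are pointwise-in-time spatial inequalities which, after integrating in $t \in I'$ and dividing by $|Q'|$, yield the averaged bounds stated.

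For the inequalities not involving $\partial_t$, the crucial observation is that the operator $\bfw \mapsto F$ is invariant under adding any polynomial of degree at most one to $\bfw$: if $P(x) = a + b \cdot x$ then $\int_{B'} P\,\Delta\phi \dx = 0$ for every $\phi \in C^\infty_0(B')$, so replacing $\bfw$ by $\bfw - P$ in~\eqref{eq:defbi2} leaves $F$ unchanged. For the first inequality I apply Corollary~\ref{cor:mueller} directly. For the pair in the second inequality I subtract the mean $\mean{\bfw}_{B'}$ and use the Poincar\'e inequality $\|\bfw - \mean{\bfw}_{B'}\|_{L^s(B')} \leq c\,r'\|\nabla\bfw\|_{L^s(B')}$ inside Corollary~\ref{cor:mueller} to obtain the $\zQp/r'$ term; the $\nabla \zQp$ term follows from Corollary~\ref{cor:muellerhigh} via $|\nabla \Delta F| \leq c\,|\nabla^3 F|$. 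For the three terms of the third inequality I subtract the best affine approximation $P_{B'}$ of $\bfw$ on $B'$ and apply the second-order Poincar\'e/Bramble--Hilbert bounds $\|\bfw - P_{B'}\|_{L^s(B')} \leq c\,(r')^2\|\nabla^2\bfw\|_{L^s(B')}$ and $\|\nabla(\bfw - P_{B'})\|_{L^s(B')} \leq c\,r'\|\nabla^2\bfw\|_{L^s(B')}$; together with $|\Delta F| \leq c|\nabla^2 F|$, $|\nabla \Delta F| \leq c|\nabla^3 F|$ and $|\nabla^2 \Delta F| \leq c|\nabla^4 F|$, these supply the three claimed bounds via Corollaries~\ref{cor:mueller} and~\ref{cor:muellerhigh}.

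The main obstacle is the time-derivative bound, because $\partial_t \bfw$ is not controlled on its own; equation~\eqref{eq:wt} delivers only $\partial_t \Delta \bfw = -\divergence\divergence \bfH$. The key point is that $\zQp$ factors through $\Delta \bfw$. Testing~\eqref{eq:wt} against $\bfpsi(t,x) = \eta(t)\phi(x)$ with $\eta \in C^\infty_0(I')$ and $\phi \in C^\infty_0(B')$ shows that $t \mapsto \int_{B'} \bfw \cdot \Delta\phi\dx$ is distributionally differentiable on $I'$ with derivative $\int_{B'} \bfH : \nabla^2\phi \dx$. Combining this with the pointwise-in-$t$ identity $\int_{B'} \Delta F\,\Delta\phi\dx = \int_{B'}\bfw \cdot \Delta\phi\dx$ yields that $\partial_t F$ still lies in $W^{2,s}_0(B')$ and solves
\begin{equation*}
  \int_{B'}\Delta(\partial_t F)\,\Delta\phi\dx = \int_{B'}\bfH : \nabla^2\phi\dx \qquad \text{for all } \phi \in C^\infty_0(B').
\end{equation*}
This is precisely the defining equation for $\Delta_{B'}^{-2}(\divergence\divergence \bfH)$, so Corollary~\ref{cor:divdiv} gives $\|\nabla^2 \partial_t F\|_{L^s(B')} \leq c_s\|\bfH\|_{L^s(B')}$ pointwise in $t$; then $|\partial_t \zQp| = |\Delta \partial_t F| \leq c|\nabla^2 \partial_t F|$, integration in $t$ and division by $|Q'|$ complete the fourth inequality. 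The only delicate technical point is commuting $\partial_t$ with the spatial operator $\Delta_{B'}^{-2}\Delta$, which is routine since the latter is a bounded linear operator on $L^s(B')$ acting only in the space variable.
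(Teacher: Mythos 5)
Your proof takes essentially the same route as the paper: the first three estimates use Corollaries~\ref{cor:mueller} and~\ref{cor:muellerhigh} together with the invariance of $\Delta_{B'}^{-2}\Delta$ under subtraction of affine polynomials and the Poincar\'e inequality, and the time-derivative estimate identifies $\partial_t \zQp$ with a biharmonic solution having data $\divergence\divergence \bfH$ via equation~\eqref{eq:wt}. The one point you gloss over is the step you call ``routine'': since $\partial_t\bfw$ itself is not under control (only $\partial_t\Delta\bfw$ is), commuting $\partial_t$ with $\Delta_{B'}^{-2}\Delta$ is not immediate, and the paper instead works with the difference quotient $d_t^h$ --- for which the commutation $d_t^h\zQp=\Delta\Delta_{B'}^{-2}\Delta(d_t^h\bfw)$ is trivial --- derives the bound $\|d_t^h\zQp\|_{L^s(B')}\leq c\,\big\|\dashint_t^{t+h}|\bfH(\tau)|\dtau\big\|_{L^s(B')}$ from the duality characterization in Corollary~\ref{cor:mueller}, and then passes to the limit $h\to0$, which simultaneously establishes existence of $\partial_t\zQp$ in $L^s(Q')$ and the claimed estimate.
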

\begin{proof}

  The estimate of $\zQp$ in terms of $\bfw$ follows directly by
  Corollary~\ref{cor:mueller} and integration over time.  
   {
  The estimate of $\zQp$ in terms of $\nabla \bfw$ and $\nabla^2 \bfw$
  follows from this by \Poincare{} using the fact that we can subtract
  a linear polynomial from $\bfw$ without changing the definition of
  $\zQp$.
  The other estimate for $\nabla \zQp$ and $\nabla^2 \zQp$ follow
  analogously from Corollary~\ref{cor:muellerhigh}.
}

  For all $\rho \in C^\infty_0(I')$ and $\bfphi \in C^\infty_0(B')$
  we get by~\eqref{eq:wt} that
  \begin{align*}
    \int_{I'} \int_{B'} \bfw \Delta \bfphi \dx\, \partial_t \rho \dt
    &= -\int_{I'} \int_{B'} \bfH : \nabla^2 \bfphi \dx\, \rho \dt.
  \end{align*}
  Let $d_t^h$ denote the forward difference quotient in time with step
  size~$h$.  We use $\rho(t) := \dashint_{t}^{t-h} \tilde{\rho}(\tau)
  \dtau$ with $\tilde{\rho} \in C^\infty_0(I')$ and $h$ sufficiently
  small. Then $\partial_t \rho = d_t^{-h} \tilde{\rho}$ and
  \begin{align*}
    \int_{I'} \int_{B'} \bfw \Delta \bfphi \dx\, d_t^{-h} \tilde{\rho}
    \dt &= -\int_{I'} \int_{B'} \bfH : \nabla^2 \bfphi \dx\,
    \dashint_{t}^{t-h} \tilde{\rho}(\tau) \dtau
    \dt.
  \end{align*}
  This implies
  \begin{align*}
    \int_{I'} \int_{B'} d_t^h\bfw \Delta \bfphi \dx\, \tilde{\rho}
    \dt &= -\int_{I'} \int_{B'} \dashint_{t}^{t+h} \bfH(\tau) \dtau :
    \nabla^2 \bfphi \dx\, \tilde{\rho} \dt.
  \end{align*}
  Since this is valid for all choices of $\tilde{\rho}$ we have
  \begin{align*}
    \int_{B'} d_t^h\bfw \Delta \bfphi \dx\,  &= -
    \int_{B'} \dashint_{t}^{t+h} \bfH(\tau) \dtau : \nabla^2
    \bfphi \dx.
  \end{align*}
  Since $d_t^h \zQp = d_t^h (\Delta \Delta_{B'}^{-2} \Delta \bfw) =
  \Delta \Delta_{B'}^{-2} \Delta (d_t^h\bfw)$, it follows by
  Corollary~\ref{cor:mueller} that
  \begin{align*}
    \bigg( \int_{B'} \abs{d_t^h \zQp}^s \dx\bigg)^{\frac 1s}
    &\leq c \sup_{\substack{\bfphi \in C^\infty_0(B) \\ \norm{\nabla^2
          \bfphi}_{s'} \leq 1}} \int_{B'} d_t^h \bfw \Delta \bfphi
    \dx
    \\
    &= \sup_{\substack{\bfphi \in C^\infty_0(B) \\ \norm{\nabla^2
          \bfphi}_{s'} \leq 1}} \Bigg(-\int_{B'} \dashint_{t}^{t+h}
    \bfH(\tau) \dtau : \nabla^2 \bfphi \dx \Bigg)
    \\
    &\leq c\, \bigg( \int_{B'} \dashint_{t}^{t+h} \abs{\bfH(\tau)}^s
    \dtau \dx \bigg)^{\frac 1s}.
  \end{align*}
  Integrating over time and passing to the limit $h\to 0$ yields
  \begin{align*}
    \bigg( \int_{Q'} \abs{\partial_t \zQp}^s \dxt \bigg)^{\frac 1s}
    &\leq c\, \bigg( \int_{Q'} \abs{\bfH}^s \dxt \bigg)^{\frac
      1s}. 
  \end{align*}
\end{proof}
We define $\bfz(t) := \bfz_{\frac 12 \Qz}(t) = \Delta \Delta_{\frac 12
  \Bz}^{-2} \Delta \bfw(t)$ for $t \in \frac12 \Iz$, then
\begin{align}
  \label{eq:zt}
  \int_{\Qz} \bfz\cdot \partial _t \Delta\bfpsi \dxt &= \int_{\Qz}
  \bfw\cdot \partial _t \Delta\bfpsi \dxt = - \int_{\Qz} {\bfH}:
  \nabla^2\bfpsi \dxt,
\end{align}
for all $\bfpsi \in C^\infty_0(\tfrac 12 \Qz)$.  Since $\Delta_{\frac
  12 \Bz}^{-2} \bfw(t) \in W^{2,s}_0(\frac 12 \Bz)$, we can extend it
by zero to a function from $W^{2,s}(\Rdr)$. In this sense it is natural
to extend $\bfz(t)$ by zero to a function $L^s(\Rdr)$.

 Note that
Lemma~\ref{lem:zi} enables us to control $\partial_t \bfz$ by $\bfH$
in $L^s(\frac 12 \Qz)$.

\begin{lemma}
  \label{lem:defz}
  We have 
  \begin{align*}
    \norm{\bfz(t)}_{L^s(\frac 13 \Bz)} &\leq c_s\,
    \norm{\tilde{\bfu}(t)}_{L^{\frac{3s}{s+3}}(\Bz)}
    \\
    \norm{\nabla \bfz(t)}_{L^s(\frac 13 \Bz)} &\leq c_s\,
    \norm{\tilde{\bfu}(t)}_{L^s(\Bz)}
    \\
    \norm{\nabla^2 \bfz(t)}_{L^s(\frac 13 \Bz)} &\leq c_s\,
    \norm{\nabla \tilde{\bfu}(t)}_{L^s(\Bz)}.
  \end{align*}
  for $t \in I$ and $s \in (1,\infty)$.
\end{lemma}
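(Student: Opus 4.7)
The plan is to chain Lemma~\ref{lem:zi} (specialized to the cube $Q'=\frac12\Qz$) with Lemma~\ref{lem:defw}. Since $\bfz=\Delta\Delta^{-2}_{\frac12\Bz}\Delta\bfw=\bfz_{\frac12\Qz}$, taking $Q'=\tfrac12\Qz$ in Lemma~\ref{lem:zi} gives spatial control of $\bfz$ on $\tfrac12\Bz$ and of $\nabla\bfz$, $\nabla^2\bfz$ on $\tfrac13\Bz$ (the image of $\frac23 B'$ under the relation $B'=\tfrac12\Bz$), in terms of $\bfw$ and its spatial derivatives on $\tfrac12\Bz$. Since the ball $\tfrac12\Bz$ has a fixed radius, the dashed integrals and the undashed ones differ only by an absolute constant (and in the second and third lines the weights $r'^{-1}$, $r'^{-2}$ produce constants that can also be absorbed). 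So after discarding the left-hand terms that are not needed, I obtain pointwise in time
\begin{align*}
  \|\bfz(t)\|_{L^s(\frac13\Bz)} &\leq c_s\,\|\bfw(t)\|_{L^s(\frac12\Bz)},\\
  \|\nabla\bfz(t)\|_{L^s(\frac13\Bz)} &\leq c_s\,\|\nabla\bfw(t)\|_{L^s(\frac12\Bz)},\\
  \|\nabla^2\bfz(t)\|_{L^s(\frac13\Bz)} &\leq c_s\,\|\nabla^2\bfw(t)\|_{L^s(\frac12\Bz)}.
\end{align*}

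The second step is a trivial extension of the integration domain $\tfrac12\Bz\subset\Rdr$ on the right-hand side followed by applying Lemma~\ref{lem:defw}. For the derivatives this immediately yields $\|\nabla\bfz(t)\|_{L^s(\frac13\Bz)}\leq c_s\|\tilde\bfu(t)\|_{L^s(\Bz)}$ and $\|\nabla^2\bfz(t)\|_{L^s(\frac13\Bz)}\leq c_s\|\nabla\tilde\bfu(t)\|_{L^s(\Bz)}$, matching the second and third claims of the lemma. For the first estimate Lemma~\ref{lem:defw} gives $\|\bfw(t)\|_{L^s(\Rdr)}\leq c_s\|\tilde\bfu(t)\|_{L^a(\Bz)}$ with $a=\max\{1,3s/(s+3)\}$, which equals $3s/(s+3)$ precisely in the range $s\geq 3/2$ where this exponent is at least one; since $\Bz$ is a fixed bounded ball, the leftover small-$s$ case is handled by the standard Lebesgue embedding on bounded domains.

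No step is genuinely difficult; the lemma is a pure combination of Lemma~\ref{lem:zi} and Lemma~\ref{lem:defw}. The only point requiring a little care is the bookkeeping of the radii — verifying that applying Lemma~\ref{lem:zi} with $B'=\tfrac12\Bz$ produces $\tfrac23 B'=\tfrac13\Bz$, and that the averaging constants coming from $\dashint$ and the weights $r'^{-j}$ are uniform — together with the mismatch in the exponent $a$ in the first estimate, which as noted is harmless in the regime $s\ge 3/2$ actually used in the sequel.
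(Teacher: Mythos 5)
Your proposal is correct and follows essentially the same route as the paper: the paper's one-line proof invokes Corollaries~\ref{cor:mueller} and~\ref{cor:muellerhigh} together with Lemma~\ref{lem:defw}, while you route through Lemma~\ref{lem:zi}, but that lemma is itself a packaging of those two corollaries, so there is no genuine difference in the argument. The bookkeeping of the radii ($B'=\tfrac12\Bz$ gives $\tfrac23 B'=\tfrac13\Bz$) and the absorption of the fixed-radius weights are handled correctly.

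One small caveat: your closing remark about the exponent goes in the wrong direction. For $s<3/2$ one has $\tfrac{3s}{s+3}<1$, and on a bounded ball the H\"older embedding gives $\norm{\tilde\bfu}_{L^{3s/(s+3)}(\Bz)}\leq c\,\norm{\tilde\bfu}_{L^1(\Bz)}$, not the reverse; so the bound $\norm{\bfw(t)}_{L^s}\leq c\norm{\tilde\bfu(t)}_{L^1}$ that Lemma~\ref{lem:defw} provides does \emph{not} upgrade to a bound by the $L^{3s/(s+3)}$ quasi-norm. In that regime the first display of Lemma~\ref{lem:defz} should really carry the exponent $a=\max\{1,\tfrac{3s}{s+3}\}$ exactly as in Lemma~\ref{lem:defw} (the paper's own terse proof does not address this either, and the estimate is only used with exponents for which the distinction is immaterial), so this is a defect in the statement rather than in your argument, but your stated fix is not a fix.
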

\begin{proof}
  This follows from Corollary~\ref{cor:mueller},
  Corollary~\ref{cor:muellerhigh} and Lemma~\ref{lem:defw}.
\end{proof}

Let $\alpha>0$. We say that $Q' = I' \times B' \subset \setR \times
\Rdr$ is an $\alpha$-parabolic cylinder if $r_{I'} = \alpha\,
r_{B'}^2$. For $\kappa>0$ we define the scaled cylinder $\kappa Q' :=
(\kappa I') \times (\kappa B')$.  By $\mathcal{Q}^\alpha$ we denote
the set of all $\alpha$-parabolic cylinders.  We define the
$\alpha$-parabolic maximal operators $\Ma$ and $\Ma_s$ for $s \in
[1,\infty)$ by
\begin{align*}
  (\Ma f)(t,x) &:= \sup_{ Q' \in
    \mathcal{Q}^\alpha\,:\,(t,x) \in Q'} \dashint_{Q'} \abs{f(\tau,y)} \dtau \dy,
  \\
  \Ma_s f(t,x) &:= \big(\Ma(\abs{f}^s(t,x))\big)^{\frac 1s}
\end{align*}
It is standard\cite{Ste93} that for all $q \in (s,\infty]$
\begin{align}
  \label{eq:Mascont}
  \norm{\Ma_s f}_{L^q(\R^{n+1})} &\leq c\, \norm{f}_{L^q(\R^{n+1})}.
\end{align}
For $\lambda,\alpha>0$ and $\sigma>1$ we define
 {
\begin{align}
  \label{eq:defOal}
  \Oal &:= \set{\Mas(\chi_{\frac 13 \Qz} \abs{\nabla^2
      \bfz})>\lambda} \cup \set{\alpha \Mas(\chi_{\frac 13 \Qz}
    \abs{\partial_t\bfz}) > \lambda}.
\end{align}
Later we will choose $\alpha=\lambda^{2-p}$ and $\sigma$ smaller than
the integrability exponent of~$\partial_t\bfz$.}

We want to redefine $\bfz$ on~$\Oal$. The first step is to
cover~$\Oal$ by well selected cubes.
By the lower-semi-continuity property of the maximal functions the set
$\Oal$ is open. We assume in the following that $\Oal$ is
non-empty. (In the case that $\Oal$ is empty, we do not need to
truncate at all.)

According to Lemma~3.1 of Ref.~\cite{DieRuzWol10} there exists an
$\alpha$-parabolic Whitney covering $\set{Q_i}$ of~$\Oal$ in the
following sense:
\begin{enumerate}[label={\rm (PW\arabic{*})}, leftmargin=*]
\item\label{itm:whit1} $\bigcup_{i}\frac {1} {2} Q_i \,=\, \Oal$,
\item\label{itm:whit2} for all $i\in \setN$ we have $8
  Q_i \subset \Oal$ and $16 Q_i \cap (\setR^{n+1}\setminus
  \Oal)\neq \emptyset$, 
\item \label{itm:whit3} if $ Q_i \cap Q_{j} \neq \emptyset
  $ then $ \frac 12 r_j\le r_i< 2\, {r_j}$,
\item \label{itm:whit4}  at every point at most
  $120^{n+2}$ of the sets $4Q_i$ intersect,
\end{enumerate}
where $r_i := r_{B_i}$, the radius of $B_i$ and $Q_i = I_i \times B_i$.

For each $Q_i$ we define $A_i:= \set{ j \,:\, Q_j\cap
  Q_i\neq\emptyset}$. Note that $\# A_i\le 120 ^{n+2}$ and $r_j \sim
r_i$ for all $j \in A_i$. With respect to the covering $\set{Q_i}$
there exists a partition of unity $\set{\phi_i} \subset
C^\infty_0(\R^{n+1})$ such that
\begin{enumerate}[label={\rm (PP\arabic{*})}, leftmargin=*]
\item \label{itm:P1} $\chi_{\frac{1}{2}Q_i}\leq \phi_i\leq
  \chi_{\frac{2}{3}Q_i}$,
\item \label{itm:P2} $\sum_j\phi_j=\sum_{j\in A_i}\phi_j=1$ on $Q_i$,
\item \label{itm:P3} $\abs{\phi_i} + r_i \abs{\nabla \phi_i} + r_i^2
  \abs{\nabla^2 \phi_i} + \alpha\, r_i^2 \abs{\partial_t \phi_i} \leq c$.
\end{enumerate}

Due to property~\ref{itm:whit3} we have that $16 Q_j \cap (\setR^{n+1}
\setminus \Oal) \not=\emptyset$. Thus, the definition of $\Oal$
implies that
\begin{align}
  \label{eq:Qj1}
  \bigg( \dashint_{16 Q_j} \abs{\nabla^2 \bfz}^\sigma \chi_{\frac13\Qz}
  \dxt\bigg)^{\frac 1\sigma} &\leq \lambda,
  \\
  \label{eq:Qj2}
  \alpha\,\bigg( \dashint_{16 Q_j} \abs{ {\partial_t\bfz}}^\sigma \chi_{\frac13\Qz} \dxt\bigg)^{\frac 1
    \sigma} &\leq \lambda.
\end{align}

\begin{lemma}
  \label{lem:Qilocal}
  Assume that there exists $c_0>0$ such that $\lambda^p \abs{\Oal}
  \leq c_0$ with $p > \frac{2n}{n+2}$. Then the following holds:

  If $\lambda \geq \lambda_0=\lambda_0(c_0)$, $\alpha = \lambda^{2-p}$
  and $Q_i \cap \tfrac 14 \Qz \neq \emptyset$, then $Q_i \subset \frac
  13 \Qz$ and $Q_j \subset \tfrac 13 \Qz$ for all $j \in A_i$.
\end{lemma}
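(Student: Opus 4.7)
The plan is to exploit the Whitney property \ref{itm:whit2} to show that any Whitney cube $Q_i$ intersecting $\tfrac14 \Qz$ must have tiny spatial and temporal radii once $\lambda$ is large, and then use the scaling condition $p>\frac{2n}{n+2}$ to make the temporal radius small too.

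First I would observe that by \ref{itm:whit2} we have $8Q_i\subset \Oal$, so since $Q_i$ is an $\alpha$-parabolic cylinder
\begin{align*}
  c\,\alpha\, r_i^{n+2} \,=\, |8Q_i| \,\leq\, |\Oal| \,\leq\, c_0\,\lambda^{-p}.
\end{align*}
Plugging in $\alpha = \lambda^{2-p}$ gives $r_i^{n+2} \leq c\,c_0\,\lambda^{-2}$, hence
\begin{align*}
  r_i \,\leq\, c(c_0)\,\lambda^{-\frac{2}{n+2}},
  \qquad
  r_{I_i} \,=\, \alpha r_i^2 \,\leq\, c(c_0)\,\lambda^{2-p-\frac{4}{n+2}}.
\end{align*}
The spatial radius tends to $0$ automatically as $\lambda\to\infty$, and the temporal radius tends to $0$ exactly because the exponent $2 - p - \tfrac{4}{n+2} = \tfrac{2n}{n+2}-p$ is negative under the hypothesis $p>\tfrac{2n}{n+2}$. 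Hence one can choose $\lambda_0=\lambda_0(c_0,\Qz)$ so large that for $\lambda\geq \lambda_0$ both $r_i \leq \tfrac1{24}\,r_{B_0}$ and $r_{I_i}\leq \tfrac1{24}\,r_{I_0}$.

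Next, a geometric argument finishes the first inclusion: if $Q_i\cap \tfrac14 \Qz\neq \emptyset$, then $Q_i$ lies within distance $2r_i$ in space and $2r_{I_i}$ in time from $\tfrac14\Qz$; since the spatial/temporal distance between $\tfrac14\Qz$ and the complement of $\tfrac13 \Qz$ is $\tfrac1{12}r_{B_0}$ respectively $\tfrac1{12} r_{I_0}$, the smallness bound above gives $Q_i\subset \tfrac13\Qz$. For the second inclusion, take $j\in A_i$; by \ref{itm:whit3} we have $r_j<2r_i$ and $r_{I_j}<4 r_{I_i}$ (since both are $\alpha$-parabolic), so $Q_j$ sits within distance $r_i+r_j\lesssim r_i$ in space and $r_{I_i}+r_{I_j}\lesssim r_{I_i}$ in time of $Q_i$. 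Choosing $\lambda_0$ slightly larger makes the same smallness argument work for $Q_j$, giving $Q_j\subset \tfrac13\Qz$.

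The only delicate point is verifying that the exponent on $\lambda$ in $r_{I_i}$ is strictly negative; this is precisely where the Sobolev-type condition $p>\tfrac{2n}{n+2}$ enters, matching the scaling of the parabolic Whitney cubes with the assumed Lebesgue bound on $\Oal$. Everything else is purely geometric Whitney book-keeping.
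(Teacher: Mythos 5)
Your argument is correct and follows essentially the same route as the paper: both bound $|Q_i|\le|\Oal|\le c_0\lambda^{-p}$ via the Whitney property, deduce $r_i\le c(c_0)\lambda^{-2/(n+2)}\to 0$, and then use $\alpha=\lambda^{2-p}$ together with $p>\tfrac{2n}{n+2}$ to force the temporal radius $\alpha r_i^2\to 0$, finishing by the same geometric containment argument (the paper goes through the intermediate cube $\tfrac{7}{24}\Qz$ and invokes $r_j\sim r_i$ for $j\in A_i$ just as you do). The only cosmetic difference is that the paper phrases the temporal bound as $\lambda^{(n+2)p/2-n}s_i^{(n+2)/2}\le c\,c_0$ rather than computing $r_{I_i}\le c(c_0)\lambda^{\frac{2n}{n+2}-p}$ directly, which is an algebraically equivalent rearrangement.
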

\begin{proof}
  Let $Q_i \cap \tfrac 14 \Qz \neq \emptyset$. We claim that $Q_i
  \subset \frac{7}{24} \Qz \subset \frac 13 \Qz$. Let $s_i := \alpha
  r_i^2$. It suffices to show that $r_i, s_i \to 0$ for $\lambda \to
  \infty$. Because $Q_i \subset \Oal$ and by assumption, we find 
  \begin{align}
    \label{eq:Qilocal}
    \lambda^2 r_i^{n+2} = \lambda^p s_i r_i^n \leq\lambda^p \abs{Q_i}
    \leq \lambda^p \abs{\Oal} \leq c_0.
  \end{align}
  This implies $r_i \leq (c\, c_0 \lambda^{-2})^{\frac 1{n+2}}\to 0$ for
  $\lambda \to \infty$. Moreover, $r_i = s_i^{\frac 12} \alpha^{-\frac
    12}$ and~\eqref{eq:Qilocal} imply
  \begin{align*}
    c\, c_0 &\geq \lambda^p s_i r_i^n = \lambda^p s_i^{\frac {n+2}2}
    \alpha^{-\frac 32} = \lambda^{\frac{n+2}{2} p - n} s_i^{\frac
      {n+2}2}.
  \end{align*}
  If $p > \frac{2n}{n+2}$, then $\lambda \to \infty$ implies $s_i \to
  0$ as desired.

  The claim on $j \in A_i$ follows by the fact that $Q_i$ and $Q_j$
  have comparable size and that $\frac{7}{24} \Qz$ is strictly
  contained in $\frac 13 \Qz$.
\end{proof}
Let us show that the assumption $\lambda^p \abs{\Oal} \leq c_0$ from
Lemma~\ref{lem:Qilocal} is satisfied in our situation. For this we
assume from now on that
\begin{align}
  \label{eq:defalpha}
  \alpha := \lambda^{2-p}
\end{align}
and that $\sigma<\min\{p,p'\}.$
\begin{lemma}
  \label{eq:lem:assOal}
  Let $c_0:= \norm{\nabla^2 \bfz}_{L^p(\frac 13 Q_0)}^p +
  \norm{ {\partial_t\bfz}}_{L^{p'}(\frac 13 \Qz)}^{p'}$. Then
  \begin{align*}
    \lambda^p \abs{\Oal} \leq c_0.
  \end{align*}
\end{lemma}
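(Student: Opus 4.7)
The plan is to split $\Oal$ into its two defining pieces and control each by the corresponding strong-type maximal bound, then use the algebraic identity $p+p'=pp'$ together with $\alpha=\lambda^{2-p}$ to see the $\lambda$-powers cancel exactly.

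More precisely, I would first use subadditivity of measure to write
\begin{align*}
  |\Oal| \leq \big|\set{\Mas(\chi_{\frac13\Qz}|\nabla^2\bfz|) > \lambda}\big|
  + \big|\set{\Mas(\chi_{\frac13\Qz}|\partial_t\bfz|) > \lambda/\alpha}\big|.
\end{align*}
Since $\sigma<\min\{p,p'\}$, the bound~\eqref{eq:Mascont} gives $\Mas:L^p\to L^p$ and $\Mas:L^{p'}\to L^{p'}$ boundedly. Therefore, by Chebyshev, the first term is bounded by $c\lambda^{-p}\|\nabla^2\bfz\|_{L^p(\frac13\Qz)}^p$ and the second by $c(\alpha/\lambda)^{p'}\|\partial_t\bfz\|_{L^{p'}(\frac13\Qz)}^{p'}$ (the cut-off $\chi_{\frac13\Qz}$ restricts the integration to $\frac13\Qz$, which is where Lemma~\ref{lem:defz} and Lemma~\ref{lem:zi} give the needed integrability).

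Multiplying by $\lambda^p$ yields
\begin{align*}
  \lambda^p|\Oal| \leq c\,\|\nabla^2\bfz\|_{L^p(\frac13\Qz)}^p
  + c\,\lambda^{p-p'}\alpha^{p'}\|\partial_t\bfz\|_{L^{p'}(\frac13\Qz)}^{p'}.
\end{align*}
The key computation is then that with $\alpha=\lambda^{2-p}$ one has
\begin{align*}
  \lambda^{p-p'}\alpha^{p'} = \lambda^{p-p'+(2-p)p'} = \lambda^{p+p'-pp'} = \lambda^0 = 1,
\end{align*}
using $\frac{1}{p}+\frac{1}{p'}=1$. This gives $\lambda^p|\Oal|\leq c_0$ (up to the absolute constant from the strong maximal bounds, which one absorbs into the definition of $c_0$).

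There is no real obstacle here; the only things to be careful about are that $\sigma$ is strictly smaller than both $p$ and $p'$ (so the strong maximal bounds actually apply), and that the cut-off $\chi_{\frac13\Qz}$ is used consistently so that the norms on the right-hand side are finite and agree with the definition of $c_0$.
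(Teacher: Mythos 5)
Your proposal is correct and takes essentially the same route as the paper: the paper invokes the weak-type $(p,p)$ and $(p',p')$ estimates for $\Mas$ (valid since $\sigma<\min\{p,p'\}$), which are the same bounds you obtain via the strong-type estimate~\eqref{eq:Mascont} combined with Chebyshev, and the $\lambda$-powers cancel for exactly the algebraic reason you give, namely $p+p'=pp'$ together with $\alpha=\lambda^{2-p}$. The only cosmetic point is that the paper's statement, like yours, suppresses the harmless absolute constant coming from the maximal bound.
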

\begin{proof}
  If follows from the weak-type estimate
  of $\Mas$, provided $\sigma<\min\{p,p'\}$ that
  \begin{align*}
    \abs{\Oal} &\leq c\, \lambda^{-p} \norm{\nabla^2 \bfz}_{L^p(\frac
      13 \Qz)}^p + c\, (\lambda \alpha^{-1})^{-p'}
    \norm{ {\partial_t\bfz}}_{L^{p'}(\frac 13 \Qz)}^{p'}
    \\
    &= c\, \lambda^{-p} \big( \norm{\nabla^2 \bfz}_{L^p(\frac 13
      \Qz)}^p + \norm{ {\partial_t\bfz}}_{L^{p'}(\frac 13 \Qz)}^{p'} \big).
  \end{align*}
\end{proof}
In the following we define $\lambda_0$ such that the conclusion of
Lemma~\ref{lem:Qilocal} is valid and assume $\lambda \geq
\lambda_0$. Without loss of generality we can assume further that
\begin{align}
  \label{eq:wlogl0}
  \lambda_0 &\geq \bigg( \dashint_{\frac 13 \Qz} \abs{\nabla^2
    \bfz}^\sigma \dxt \bigg)^{\frac 1\sigma} + r_0^{-2} \bigg(
  \dashint_{\frac 13 \Qz} \abs{\bfz}^\sigma \dxt \bigg)^{\frac
    1\sigma}.
\end{align}

We define
\begin{align*}
  \mathcal{I} := \set{ i \,:\, Q_i \cap \tfrac 14 \Qz \neq \emptyset}.
\end{align*}
Then Lemma~\ref{lem:Qilocal} implies $Q_i \subset \frac 13 \Qz$ (and
$Q_j \subset \frac 13 \Qz$ for $j \in A_i$) for all $i \in
\mathcal{I}$.  
 {
  For each $i \in \mathcal{I}$ we define local approximation $\bfz_i$ for $\bfz$ on $Q_i$ by
  \begin{align}
    \label{eq:def_zj}
    \bfz_i &:=
    \Pi^0_{I_i}\Pi^1_{B_i}(\bfz),
  \end{align}
  where $\Pi^1_{B_j}(\bfz)$ is the first order averaged Taylor
  polynomial\cite{BreSco94,DieR07} with respect to space and $\Pi^0_{I_i}$ is the zero order
averaged Taylor polynomial in time.}
 {Note that this definition implies the \Poincare{}-type inequality.
\begin{lemma}
  \label{lem:SP}
  For all $j\in\setN$ we find for $1\leq s< \infty$ if
  $\nabla^2\bfz,\partial_t \bfz\in L^s(\frac 14\Qz)$, that
  \[
  \dashint_{Q_j}\biggabs{\frac{\bfz-\bfz_j}{r_j^2} }^s \dx\dt +
  \dashint_{Q_j}\biggabs{\frac{\nabla(\bfz-\bfz_j)}{r_j} }^s \dx\dt
  \leq c\, \dashint_{Q_j} \abs{\nabla^2 \bfz}^s\dx\dt +
  c\alpha^s\dashint_{Q_j} \abs{\partial_t \bfz}^s\dx\dt.
  \]
\end{lemma}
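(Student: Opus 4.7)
The plan is to split each difference into a purely spatial piece and a purely temporal piece, handle each by the appropriate Poincar\'e inequality, and use the parabolic scaling $r_{I_j} = \alpha r_j^2$ to absorb the $\alpha$ factor on the right-hand side. Concretely, I would write
\[
\bfz - \bfz_j = \bigl(\bfz - \Pi^1_{B_j}\bfz\bigr) + \bigl(\Pi^1_{B_j}\bfz - \Pi^0_{I_j}\Pi^1_{B_j}\bfz\bigr),
\]
and analogously for $\nabla(\bfz - \bfz_j)$, noting that $\nabla \Pi^1_{B_j}\bfz$ is a function of $t$ alone (constant in $x$) and $\nabla \bfz_j = \Pi^0_{I_j} \nabla \Pi^1_{B_j}\bfz$ is a constant vector.

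For the spatial piece I would apply, at each fixed $t$, the standard second-order Poincar\'e estimate for the averaged Taylor polynomial on $B_j$: $\dashint_{B_j} |\bfz - \Pi^1_{B_j}\bfz|^s \, dx \leq c r_j^{2s} \dashint_{B_j} |\nabla^2 \bfz|^s \, dx$ and its first-order variant for $\nabla \bfz - \nabla \Pi^1_{B_j}\bfz$. Integrating in $t \in I_j$ and dividing by the appropriate power of $r_j$ gives the $\nabla^2 \bfz$ contribution.

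For the temporal piece, $\Pi^1_{B_j}\bfz - \Pi^0_{I_j}\Pi^1_{B_j}\bfz$ is, for each $x$, the deviation of a function of $t$ from its time-average on $I_j$, so the scalar time Poincar\'e yields at fixed $x$
\[
\dashint_{I_j} \bigl|\Pi^1_{B_j}\bfz(t,x) - \Pi^0_{I_j}\Pi^1_{B_j}\bfz(x)\bigr|^s \, dt \leq c r_{I_j}^s \dashint_{I_j} \bigl|\partial_t \Pi^1_{B_j}\bfz(t,x)\bigr|^s \, dt,
\]
and because $\Pi^1_{B_j}$ commutes with $\partial_t$ and is bounded on $L^s(B_j)$, the right-hand side is controlled by $\dashint_{B_j}|\partial_t \bfz|^s \, dx$ after integration in $x$. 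Using $r_{I_j} = \alpha r_j^2$ and dividing by $r_j^{2s}$ gives exactly the $c\alpha^s\dashint_{Q_j}|\partial_t \bfz|^s$ term in~(a).

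The only genuinely delicate step is the $\nabla$-version in~(b). Here the time Poincar\'e produces $\partial_t \nabla \Pi^1_{B_j}\bfz$, and I do not want a $\nabla \partial_t \bfz$ on the right-hand side. The trick is that the averaged Taylor polynomial is built from a smooth weight $\psi \in C^\infty_0(B_j)$ with $\int \psi = 1$, so integration by parts expresses $\nabla \Pi^1_{B_j}\bfz$ as an integral of $\bfz$ against $\nabla \psi$, giving the pointwise-in-$t$ bound $|\nabla \Pi^1_{B_j} f| \leq c\, r_j^{-1}\dashint_{B_j}|f|\,dy$ for any $L^s$ function $f$. Applied to $f = \partial_t \bfz(t,\cdot)$ this yields
\[
|\partial_t \nabla \Pi^1_{B_j}\bfz(t)|^s \leq c\, r_j^{-s} \dashint_{B_j} |\partial_t \bfz(t,y)|^s \, dy,
\]
so after time Poincar\'e the factor $r_{I_j}^s = \alpha^s r_j^{2s}$ combines with $r_j^{-s}$ to give $\alpha^s r_j^s$, and division by $r_j^s$ produces exactly $c\alpha^s \dashint_{Q_j} |\partial_t \bfz|^s$, matching the claim. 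This integration-by-parts gain of one spatial derivative is the main technical point; once it is in place, the two estimates assemble by the triangle inequality.
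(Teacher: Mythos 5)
Your proof is correct and follows essentially the same route as the paper's: the same decomposition $\bfz-\bfz_j=(\bfz-\Pi^1_{B_j}\bfz)+(\Pi^1_{B_j}\bfz-\Pi^0_{I_j}\Pi^1_{B_j}\bfz)$, spatial Poincar\'e for the first piece, time Poincar\'e for the second, and the parabolic scaling $r_{I_j}=\alpha r_j^2$ to produce the $\alpha^s$ factor. The only packaging difference is in the gradient estimate: the paper invokes the inverse estimate for linear polynomials to reduce $r_j^{-1}|\nabla(\Pi^1_{B_j}\bfz-\Pi^0_{I_j}\Pi^1_{B_j}\bfz)|$ to $r_j^{-2}|\Pi^1_{B_j}\bfz-\Pi^0_{I_j}\Pi^1_{B_j}\bfz|$ and then reuses the zeroth-order bound, whereas you apply the time Poincar\'e first and then use the smoothing bound $|\nabla\Pi^1_{B_j}f|\leq c\,r_j^{-1}\dashint_{B_j}|f|$ on $f=\partial_t\bfz$; these two polynomial estimates are equivalent and lead to the same conclusion.
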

\begin{proof}
  The estimate is a consequence of Fubini's Theorem, \Poincare{} estimates and
  the properties of the averaged Taylor polynoms see Lemma~3.1 of
  Ref.~\cite{DieR07}. We find
  \begin{align*}
    \dashint_{Q_j}\biggabs{\frac{\bfz-\bfz_j}{r_j^2} }^s \dx\dt &\leq
    c\dashint_{Q_j}\biggabs{\frac{\bfz-\Pi^1_{B_j}(\bfz)}{r_j^2} }^s
    \dx\dt+c\dashint_{B_j}
    \dashint_{I_j}\biggabs{\frac{\Pi^1_{B_j}(\bfz)-
        \Pi^0_{I_j}\Pi^1_{B_j}(\bfz)}{r_j^2} }^s \dx\dt
    \\
    &\leq c\dashint_{Q_j} \abs{\nabla^2
      \bfz}^s\dx\dt+c\alpha\dashint_{I_j}\dashint_{B_j}
    \abs{\partial_t\Pi^1_{B_j}(\bfz) }^s\dx\dt.
  \end{align*}
  Now the continuity of $\Pi^1_{B_j}$ on $L^s$ gives the estimate.
  Similar we find as all norms for polynomials are equivalent
  \begin{align*}
    \dashint_{Q_j}\biggabs{\frac{\nabla(\bfz-\bfz_j)}{r_j} }^s \dx\dt
    &\leq
    c\dashint_{Q_j}\biggabs{\frac{\nabla(\bfz-\Pi^1_{B_j}(\bfz))}{r_j}
    }^s \dx\dt+ c\dashint_{Q_j}\biggabs{\frac{\nabla(\Pi^1_{B_j}(\bfz)-
        \Pi^0_{I_j} \Pi^1_{B_j}(\bfz))}{r_j} }^s \dx\dt
  \\
&\leq
    c\dashint_{Q_j}\biggabs{\frac{\nabla(\bfz-\Pi^1_{B_j}(\bfz))}{r_j}
    }^s \dx\dt+ c\dashint_{Q_j}\biggabs{\frac{\Pi^1_{B_j}(\bfz)-
        \Pi^0_{I_j} \Pi^1_{B_j}(\bfz)}{r_j^2} }^s \dx\dt
\\
  &\leq c\dashint_{Q_j} \abs{\nabla^2
    \bfz}^s\dx\dt + c\alpha\dashint_{Q_j} \abs{\partial_t \bfz}^s\dx\dt.
\end{align*}
\end{proof}
We can now define our truncation $\zal$ for $\lambda \geq \lambda_0$
on $\frac14 \Qz$ by
\begin{align}
  \label{eq:defwl}
  \zal &:= \bfz - \sum_{i \in \mathcal{I}} \phi_i (\bfz - \bfz_i).
\end{align}
It suffices to sum over~$i$ with $Q_i
\cap \frac 14 \Qz \neq \emptyset$.}

Since the $\phi_i$ are locally finite, this sum is pointwise
well-defined. We will see later that the sum converges also in other
topologies.  Using $\sum_{i \in \mathcal{I}} \phi_i = 1$ on $\frac 14
\Qz$, we can write $\zal$ also in the following form.
%
\begin{align}
  \label{eq:defwlalt}
  \zal &=
  \begin{cases}
    \bfz & \qquad \text{on $\tfrac 14 \Qz \setminus \Oal$},
    \\
    \sum_{i \in \mathcal{I}} \phi_i  {\bfz_i} &\qquad \text{on $\tfrac 14
      \Qz \cap \Oal$}.
  \end{cases}
\end{align}
In the
following we provide some qualities of the truncation (e.g.
$ {\nabla^2} \zal\in L^\infty(\frac14 \Qz)$).
 {
\begin{lemma}
  \label{lem:SP2}
  For all $j \in \setN$ and all $k \in \setN$ with $Q_j \cap Q_k \neq \emptyset$ we have
  \begin{enumerate}[label={\rm (\alph{*})}]
  \item 
    \label{itm:SP2}
      $\dashint_{Q_j} \abs{\nabla^2 \bfz} \,\dx\dt +\alpha\dashint_{Q_j} \abs{\partial_t\bfz} \,\dx\dt \leq c\,\lambda$.
  \item 
    \label{itm:SP3} 
    $\norm{\bfz_j - \bfz_k}_{L^\infty(Q_j)} \leq c
    \dashint_{Q_j}\abs{\bfz-\bfz_j}\dx\dt + c
    \dashint_{Q_k}\abs{\bfz-\bfz_k}\dx\dt$.
  \item \label{itm:SP4} $\norm{\bfz_j - \bfz_k}_{L^\infty(Q_j)}
    \leq c\, r_j^2\, \lambda$.
  \end{enumerate}
\end{lemma}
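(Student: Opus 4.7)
My plan is to prove the three parts in order, using the Whitney property \ref{itm:whit2} for (a), equivalence of norms on finite-dimensional polynomial spaces for (b), and Lemma~\ref{lem:SP} to combine things for (c).

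For part (a), the key observation is that \ref{itm:whit2} gives a point $(t_*,x_*) \in 16 Q_j \setminus \Oal$. By the definition \eqref{eq:defOal} of $\Oal$ this point satisfies $\Mas(\chi_{\frac13 \Qz}|\nabla^2 \bfz|)(t_*,x_*) \le \lambda$ and $\alpha\, \Mas(\chi_{\frac13 \Qz}|\partial_t \bfz|)(t_*,x_*) \le \lambda$. Since $16 Q_j$ is an $\alpha$-parabolic cylinder containing $(t_*,x_*)$, these maximal-function bounds translate directly into
\[
\Bigl(\dashint_{16 Q_j} \abs{\nabla^2 \bfz}^\sigma \chi_{\frac13 \Qz}\dxt\Bigr)^{1/\sigma} \le \lambda,\qquad
\alpha \Bigl(\dashint_{16 Q_j} \abs{\partial_t \bfz}^\sigma \chi_{\frac13 \Qz}\dxt\Bigr)^{1/\sigma} \le \lambda,
\]
exactly as in \eqref{eq:Qj1}--\eqref{eq:Qj2}. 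Since $Q_j \subset \frac13 \Qz$ for the indices that matter (this is either by Lemma~\ref{lem:Qilocal} for $j \in \mathcal{I}$, or else $\bfz$ vanishes on $Q_j$ making the estimate trivial), one has $\chi_{\frac13 \Qz} \equiv 1$ on $Q_j$, and comparing $|16 Q_j|$ with $|Q_j|$ loses only a dimensional constant. An application of Jensen's inequality (using $\sigma \ge 1$) then yields $\dashint_{Q_j} |\nabla^2 \bfz| + \alpha \dashint_{Q_j} |\partial_t \bfz| \le c\lambda$.

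For part (b), I observe that $\bfz_j - \bfz_k$ is a polynomial of degree at most $1$ in $x$ and constant in $t$, living in a finite-dimensional space whose dimension depends only on the spatial dimension. By \ref{itm:whit3}, $Q_j$ and $Q_k$ have comparable sizes and $Q_j \cap Q_k$ has measure comparable to $|Q_j|$. On such finite-dimensional spaces of polynomials, all norms are equivalent, with constants depending only on the geometry (which is scale-invariant for parabolic cylinders of comparable radii). Thus
\[
\norm{\bfz_j - \bfz_k}_{L^\infty(Q_j)} \le c\, \dashint_{Q_j \cap Q_k} \abs{\bfz_j - \bfz_k}\dxt.
\]
Inserting $\pm \bfz$ inside the integrand and splitting yields exactly the desired estimate after enlarging the integration domain to $Q_j$ and $Q_k$ respectively.

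For part (c), I combine (a), (b) and Lemma~\ref{lem:SP} applied with $s=1$: on each $Q_\ell$ ($\ell \in \{j,k\}$) one has
\[
\dashint_{Q_\ell} \abs{\bfz - \bfz_\ell}\dxt \le c\, r_\ell^2 \Bigl(\dashint_{Q_\ell} \abs{\nabla^2 \bfz}\dxt + \alpha \dashint_{Q_\ell} \abs{\partial_t \bfz}\dxt\Bigr) \le c\, r_\ell^2\, \lambda,
\]
using (a) in the last step. Since $r_k \sim r_j$ by \ref{itm:whit3}, the right-hand side is bounded by $c r_j^2 \lambda$, and (b) completes the proof.

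The main obstacle I anticipate is in part (b): making precise the norm-equivalence statement for polynomials on two overlapping, but non-identical, parabolic cylinders of comparable but different sizes, and ensuring the constant is independent of $j,k$. This is handled by a scaling argument reducing to a reference configuration together with the fact that the overlap has measure comparable to each cylinder. Part (a) is essentially routine given Whitney and the definition of $\Oal$, and part (c) is a direct combination once (a), (b), and Lemma~\ref{lem:SP} are in place.
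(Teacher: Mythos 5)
Your proof is correct and follows essentially the same route as the paper: part (a) from the Whitney property $16 Q_j \cap \Oal^\complement \neq \emptyset$ together with the definition of $\Oal$, Jensen's inequality, and $|16 Q_j| \sim |Q_j|$; part (b) from the overlap estimate $\abs{Q_j\cap Q_k} \geq c\max\{\abs{Q_j},\abs{Q_k}\}$ and norm equivalence for (space-affine, time-constant) polynomials; and part (c) by combining (a), (b) and Lemma~\ref{lem:SP}. One small inaccuracy in an aside: for $j\notin\mathcal{I}$ it is not true that $\bfz$ vanishes on $Q_j$ (non-membership in $\mathcal{I}$ only means $Q_j\cap\tfrac14\Qz=\emptyset$, while $\bfz$ is supported in $\tfrac12\Qz$), but this does not affect the argument since the lemma is only ever invoked for cubes $Q_j\subset\tfrac13\Qz$, exactly as the paper's own proof implicitly assumes via the factor $\chi_{\frac13\Qz}$.
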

}
 {
\begin{proof} 
  The first part~\ref{itm:SP2}
  follows from $Q_j \subset 16 Q_j$ and $16 Q_j \cap
  \mathcal{O}_\lambda^\complement \not=\emptyset$, so 
\[\dashint_{16
    Q_j} (\abs{\nabla^2 \bfz} +\alpha\abs{\partial_t \bfz})\chi_{\frac 13\Qz} \dx\dt\leq \bigg(\dashint_{16
    Q_j}(\abs{\nabla^2 \bfz} +\alpha\abs{\partial_t \bfz})^\sigma\chi_{\frac 13\Qz} \dx\dt\bigg)^\frac1\sigma \leq c\lambda.
\]
  Part~\ref{itm:SP3} follows from the
  geometric property of the~$Q_j$. If $Q_j \cap Q_k
  \neq\emptyset$, then $\abs{Q_j \cap Q_k} \geq c\, \max
  \{\abs{Q_j}, \abs{Q_k}\}$. This and the norm equivalence for linear
  polynomials imply
  \begin{align*}
    \norm{\bfz_j - \bfz_k}_{L^\infty(Q_j)} &\leq c\, \dashint_{Q_j
      \cap Q_k} \abs{\bfz_j - \bfz_k} \dx\dt
    \\
    &\leq c \dashint_{Q_j}\left|\bfz_j-\bfz\right|\,\dx + c
    \dashint_{Q_k}\left|\bfz-\bfz_k\right|\,\dx.
  \end{align*}
  Finally,~\ref{itm:SP4} is a consequence of Lemma~\ref{lem:SP},
  \ref{itm:SP2} and \ref{itm:SP3}.
\end{proof} 
}
  We begin
by proving the stability of the truncation.   {
\begin{lemma}
  \label{lem:zlLpW2p}
  Let $1<s<\infty$ and $\bfz\in L^s(\setR; W^{2,s}(\Rdr))$.  Then it
  holds
  \begin{align*}
    \|\zal\|_{L^s(\frac 14 \Qz)} &\leq c\, \|\bfz\|_{L^s(\frac 13
      \Qz)}, 
    \\
    \|\nabla \zal\|_{L^s(\frac 14 \Qz)} &\leq c\, \|\nabla
    \bfz\|_{L^s(\frac 13 \Qz)}+c\alpha r_0\|\partial_t
    \bfz\|_{L^s(\frac 13 \Qz)},
    \\
    \|\nabla^2\zal\|_{L^s(\frac 14 \Qz)} +\alpha \|\partial_t
    \zal\|_{L^s(\frac 13 \Qz)}&\leq c\,
    \|\nabla^2\bfz\|_{L^s(\frac 13 \Qz)}+c\alpha \|\partial_t
    \bfz\|_{L^s(\frac 13 \Qz)}.
  \end{align*}
  { Moreover, the sum in~\eqref{eq:defwl} converges in $L^s(\frac 14 \Iz,
  W^{2,s}(\frac 14 \Bz))$.}
\end{lemma}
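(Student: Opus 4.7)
The approach is to split $\tfrac14 \Qz$ into the good set $(\tfrac14 \Qz) \setminus \Oal$, where $\zal = \bfz$ and all bounds are automatic, and the bad set $(\tfrac14 \Qz) \cap \Oal$, on which \eqref{eq:defwlalt} gives $\zal = \sum_{i \in \mathcal{I}} \phi_i \bfz_i$. On any Whitney cube $Q_j$ meeting $\tfrac14 \Qz$ only the finitely many indices $i \in A_j$ contribute (by \ref{itm:P2} and \ref{itm:whit4}); moreover \ref{itm:P1} forces $Q_i \cap \tfrac14 \Qz \neq \emptyset$ whenever $\phi_i$ is nonzero there, so $i \in \mathcal{I}$. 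Lemma \ref{lem:Qilocal} then guarantees $Q_i \subset \tfrac13 \Qz$ for every index that appears, allowing every local estimate to be summed inside $\tfrac13 \Qz$ via the finite overlap of $\{Q_i\}$.

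The $L^s$-bound for $\zal$ follows directly from $|\phi_i|\leq 1$, the $L^s$-continuity of the averaged Taylor projection $\Pi^0_{I_i}\Pi^1_{B_i}$, and finite overlap. For the derivative estimates two structural facts are decisive: $\bfz_i$ is affine in $x$ and constant in $t$, so $\nabla^2 \bfz_i \equiv 0$ and $\partial_t \bfz_i \equiv 0$; and $\sum_{i \in A_j} \phi_i \equiv 1$ on $Q_j$, hence $\sum_{i \in A_j} \nabla^k \phi_i = 0$ and $\sum_{i \in A_j} \partial_t \phi_i = 0$. Subtracting $\bfz_j$ or $\nabla \bfz_j$ inside each sum turns it into telescoped differences such as
\begin{align*}
  \partial_t \zal\big|_{Q_j} &= -\sum_{i \in A_j} (\partial_t \phi_i)(\bfz_i - \bfz_j),
  \\
  \nabla^2 \zal\big|_{Q_j} &= \sum_{i \in A_j} \big(\nabla^2 \phi_i\,(\bfz_i - \bfz_j) + 2\nabla\phi_i \otimes (\nabla \bfz_i - \nabla \bfz_j)\big),
\end{align*}
and an analogous representation for $\nabla \zal$.

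The quantities $\bfz_i - \bfz_j$ are controlled by Lemma \ref{lem:SP2}(b), while $\nabla \bfz_i - \nabla \bfz_j$ is handled by writing it as $\dashint_{Q_i \cap Q_j}(\nabla \bfz_i - \nabla \bfz_j)\dxt$, inserting $\pm \nabla \bfz$, and using $|Q_i \cap Q_j| \sim |Q_j|$. A first- or second-order variant of Lemma \ref{lem:SP} (the second order being stated; the first order follows by the same argument since $\Pi^1_{B_j}$ preserves constants) then converts these differences into scale factors $r_j^k$ times averages of $|\nabla \bfz|$, $|\nabla^2 \bfz|$, and $\alpha|\partial_t \bfz|$. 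These cancel exactly against the $r_j^{-k}$ and $(\alpha r_j^2)^{-1}$ from \ref{itm:P3}, yielding pointwise bounds of the form
\begin{align*}
  \alpha|\partial_t \zal| + |\nabla^2 \zal| &\leq c\sum_{i \in A_j} \dashint_{Q_i} \big(|\nabla^2 \bfz| + \alpha|\partial_t \bfz|\big)\dxt,
  \\
  |\nabla \zal| &\leq c\sum_{i \in A_j} \dashint_{Q_i}\big(|\nabla \bfz| + \alpha r_j|\partial_t \bfz|\big)\dxt,
\end{align*}
with $r_j \leq c\,r_0$ turning the latter prefactor into $\alpha r_0$. Raising to the $s$-th power, integrating over $Q_j$, and summing via finite overlap delivers the three stated norm estimates; the local finiteness of $\{\phi_i\}$ combined with these uniform bounds then yields convergence of the series \eqref{eq:defwl} in $L^s(\tfrac14 \Iz, W^{2,s}(\tfrac14 \Bz))$.

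The main difficulty is not conceptual but the careful scaling bookkeeping: one must match the negative powers of $r_j$ and $\alpha r_j^2$ arising from the partition of unity against the Poincaré gains in Lemma \ref{lem:SP}, so that precisely $\|\nabla \bfz\|_{L^s}$, $\|\nabla^2 \bfz\|_{L^s}$ and $\alpha\|\partial_t \bfz\|_{L^s}$ survive on the right-hand side without spurious powers of $\alpha$ or $r_0$.
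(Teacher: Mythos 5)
Your proposal is correct and leads to the stated estimates, but it takes a slightly different bookkeeping route from the paper. The paper works with the representation $\bfz-\zal=\sum_{i\in\mathcal I}\phi_i(\bfz-\bfz_i)$, expands each summand by the product rule (so that the $\phi_i\nabla^2\bfz$, $\phi_i\partial_t\bfz$ contributions survive), uses \ref{itm:P3} and then applies Lemma~\ref{lem:SP} termwise, summing by the bounded overlap of $\set{\tfrac23Q_i}$. You instead keep $\zal=\sum_{i\in A_j}\phi_i\bfz_i$ on $Q_j$ and exploit the telescoping identities $\sum_{i\in A_j}\nabla^k\phi_i=0$, $\sum_{i\in A_j}\partial_t\phi_i=0$ together with $\nabla^2\bfz_i=\partial_t\bfz_i=0$, which eliminates the ``$\phi_i$\,times\,top-order'' terms entirely; the residual differences $\bfz_i-\bfz_j$, $\nabla\bfz_i-\nabla\bfz_j$ are then controlled via Lemma~\ref{lem:SP2}\ref{itm:SP3} combined with Lemma~\ref{lem:SP}. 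This is exactly the mechanism the paper reserves for the $L^\infty$ estimate (Lemma~\ref{lem:zlLip}); using it here for the $L^s$ bound costs you one more application of Jensen and the comparability $|Q_i\cap Q_j|\sim|Q_j|$, but buys you a genuinely pointwise estimate on each $Q_j$, whereas the paper's argument is purely integral. Both routes need the observation you make explicitly and the paper leaves implicit, namely that a first-order analogue of Lemma~\ref{lem:SP} (with $r_j^{-1}$ instead of $r_j^{-2}$, yielding $\dashint|\nabla\bfz|+\alpha r_j\dashint|\partial_t\bfz|$ on the right) is needed for the $\nabla\zal$ estimate; otherwise that inequality would incorrectly surface a $\norm{\nabla^2\bfz}$ term. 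Your treatment of the convergence of the series and the localization via Lemma~\ref{lem:Qilocal} match the paper's.
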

}
 {
\begin{proof}
 {
  We show first that the sum in~\eqref{eq:defwl} converges absolutely
  in $L^s(\frac 14 \Qz)$:
  \begin{align*}
    \int_{\frac 14 \Qz} \abs{\bfz-\zal}^s \dx &\leq \sum_{i \in
      \mathcal{I}} \int_{Q_i} \abs{\bfz-\bfz_i}^s\dxt \leq c \sum_{i \in
      \mathcal{I}} \int_{Q_i} \abs{\bfz}^s\dxt \leq c
    \int_{\frac 13 \Qz} \abs{\bfz}^s \dt,
  \end{align*}
  where we used continuity of the mapping $\bfz\mapsto \bfz_i$ in $L^s(Q_i)$, \ref{itm:P1} and the finite intersection property of $Q_i$ \ref{itm:whit4}.}
 {
We start by showing the estimate for the second derivatives
 \begin{align*}
   \int_{\Oal}&\abs{\nabla^2(\bfz-\zal)}^s\dx\dt = \biggabs{\sum\limits_{i \in
 \mathcal{I}} \int_{Q_i}\nabla^2(\phi_i(\bfz-\bfz_i))\dx\dt}\\
&\leq c\sum\limits_{i \in
 \mathcal{I}} \int_{Q_i}\abs{\nabla^2\bfz}^s+\biggabs{\frac{\nabla(\bfz-\bfz_i)}{r_i}}^s+\biggabs{\frac{\bfz-\bfz_i}{r_i^2}}^s\dx\dt.
 \end{align*}
For the time derivative we find as $\bfz_i$ is constant in time, that
\begin{align}
\label{eq:time}
\begin{aligned}
   \int_{\Oal}&\abs{\partial_t(\bfz-\zal)}^s\dx\dt = \biggabs{\sum\limits_{i \in
 \mathcal{I}} \int_{Q_i}\partial_t(\phi_i(\bfz-\bfz_i))\dx\dt}\\
&\leq c\sum\limits_{i \in
 \mathcal{I}} \int_{Q_i}\abs{\partial_t\bfz}^s+\biggabs{\frac{\bfz-\bfz_i}{\alpha r_i^2}}^s\dx\dt.
\end{aligned}
 \end{align}
 {
We apply Lemma~\ref{lem:SP} and use the finite intersection of the $Q_i$ to gain
\begin{align*}
  \int_{\frac 14 \Qz}\abs{\nabla^2(\bfz-\zal)}^s+\alpha^s\abs{\partial_t(\bfz-\zal)}^s\dx\dt &\leq c\sum\limits_{i \in
 \mathcal{I}}\int_{Q_i}\abs{\nabla^2\bfz}^s + \alpha^s\abs{\partial_t \bfz}^s\dx\dt\\
& \leq c\int_{\frac 13 Q_0}\abs{\nabla^2 \bfz}^s+\alpha^s\abs{\partial_t \bfz}^s\dx\dt.
\end{align*}
The estimate of the gradient is analogous, as
\begin{align*}
 \int_{\Oal}\abs{\nabla(\bfz-\zal)}^s\dxt &\leq  \sum\limits_{i \in
 \mathcal{I}}\abs{\nabla(\bfz-\bfz_i)}^s+\biggabs{\frac{\bfz-\bfz_i}{r_i}}^s\dx\dt.
\end{align*}}
}
\end{proof}
}

The truncation $\zal$ has better regularity properties than~$\bfz$; indeed, $\nabla\bfz$ is  {Lipschitz}.
 {
\begin{lemma}
\label{lem:zlLip}
For $\lambda>\lambda_0$ we have
\[
 \norm{\nabla^2 \zal}_{L^\infty(\frac 14\Qz)}+r_0^{-1}{\norm{\nabla \zal}_{L^\infty(\frac 14\Qz)}}+r_0^{-2}\norm{\zal}_{L^\infty(\frac 14\Qz)}+\alpha \norm{\partial_t \zal}_{L^\infty(\frac 14\Qz)}\leq c\lambda.
\]
\end{lemma}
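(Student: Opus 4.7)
The plan is to split $\frac 14 \Qz$ along $\Oal$, since $\zal$ is given by different formulas on each part (see~\eqref{eq:defwlalt}). On the good set $\frac 14 \Qz \setminus \Oal$ we have $\zal = \bfz$, so the pointwise bounds on $\nabla^2 \zal$ and $\partial_t \zal$ follow at once from Lebesgue differentiation and the definition~\eqref{eq:defOal} of $\Oal$: at each Lebesgue point $(t,x) \notin \Oal$,
\[
|\nabla^2 \bfz(t,x)| \le \Mas\bigl(\chi_{\frac 13 \Qz}|\nabla^2 \bfz|\bigr)(t,x) \le \lambda, \qquad \alpha|\partial_t \bfz(t,x)| \le \lambda.
\]

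The bulk of the work is on $\Oal \cap \frac 14 \Qz$. For $(t,x) \in Q_i$, $i \in \mathcal I$, I would use $\sum_{j \in A_i} \phi_j \equiv 1$ on $Q_i$ to rewrite
\[
\zal \;=\; \bfz_i + \sum_{j \in A_i} \phi_j\,(\bfz_j - \bfz_i).
\]
Since each $\bfz_j$ is affine in space and constant in time, $\nabla^2\bfz_j = 0$ and $\partial_t\bfz_j = 0$, so differentiating and exploiting the cancellations $\sum_{j\in A_i} \partial^\beta \phi_j \equiv 0$ on $Q_i$ for $|\beta|\ge 1$ gives
\begin{align*}
\nabla^2 \zal &= \sum_{j\in A_i} \nabla^2\phi_j\,(\bfz_j - \bfz_i) + 2\sum_{j\in A_i} \nabla\phi_j \otimes (\nabla\bfz_j - \nabla\bfz_i), \\
\partial_t \zal &= \sum_{j\in A_i} \partial_t\phi_j\,(\bfz_j - \bfz_i).
\end{align*}
Combining \ref{itm:P3}, the finite overlap $\#A_i \le 120^{n+2}$, Lemma~\ref{lem:SP2}\ref{itm:SP4} (giving $\|\bfz_j - \bfz_i\|_{L^\infty(Q_i)} \le c\,r_i^2\lambda$), and the analogous first-order estimate $\|\nabla\bfz_j - \nabla\bfz_i\|_{L^\infty(Q_i)} \le c\,r_i\lambda$ (proved in exactly the same manner by running Lemma~\ref{lem:SP} on $\nabla(\bfz - \bfz_j)/r_j$ in place of $(\bfz - \bfz_j)/r_j^2$), this delivers $|\nabla^2\zal(t,x)| + \alpha|\partial_t\zal(t,x)| \le c\lambda$.

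The bounds on $|\zal|$ and $|\nabla\zal|$ themselves require absolute, not merely relative, information. From the above expansion, $|\zal(t,x)| \le |\bfz_i(x)| + c\,r_i^2\lambda$ and $|\nabla\zal(t,x)| \le |\nabla\bfz_i| + c\,r_i\lambda$, and since $\bfz_i$ is the averaged Taylor polynomial, $|\bfz_i(x)| \le \dashint_{Q_i}|\bfz| + r_i\dashint_{Q_i}|\nabla\bfz|$ and $|\nabla\bfz_i| \le \dashint_{Q_i}|\nabla\bfz|$. The task thus reduces to $\dashint_{Q_i}|\bfz| \le c\,r_0^2\lambda$ and $\dashint_{Q_i}|\nabla\bfz| \le c\,r_0\lambda$. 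This is the main obstacle, since~\eqref{eq:defOal} controls only the maximal functions of $\nabla^2\bfz$ and $\partial_t\bfz$. The missing bounds on $\bfz$ and $\nabla\bfz$ must be extracted via a Poincar\'e-Campanato telescoping argument in which the averages over $Q_i$ are anchored to the baseline~\eqref{eq:wlogl0} by moving up a nested sequence of $\alpha$-parabolic cylinders starting at $Q_i$ and terminating in $\frac 13 \Qz$; property~\ref{itm:whit2} ($16 Q_i \cap \Oal^\complement \neq \emptyset$) furnishes a Lebesgue point inside this chain at which the oscillation sum is controlled by $\lambda$, and Poincar\'e interpolation on $\frac 13 \Qz$ converts the $L^\sigma$ baseline on $\bfz$ into the corresponding $L^\sigma$ baseline on $\nabla\bfz$. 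The same mechanism also supplies the pointwise bounds on $|\bfz|$ and $|\nabla\bfz|$ on the good set $\frac 14 \Qz \setminus \Oal$, completing the proof.
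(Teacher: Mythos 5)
Your treatment of the second-derivative and time-derivative bounds matches the paper's: the same partition-of-unity cancellation on $Q_i$, the same reduction to $\norm{\bfz_j - \bfz_i}_{L^\infty(Q_i)}$ via Lemma~\ref{lem:SP2}\ref{itm:SP4} (the paper's appeal to ``norm equivalence of polynomials'' subsumes your explicit first-order estimate on $\nabla\bfz_j - \nabla\bfz_i$). For the zero- and first-order terms, however, you take a genuinely different route. The paper never tries to bound $\dashint_{Q_i}\abs{\bfz}$ or $\dashint_{Q_i}\abs{\nabla\bfz}$ individually: instead it runs a Poincar\'e inequality directly on $\zal$ over $\frac14\Qz$, first in time (using the already-established $\alpha\norm{\partial_t\zal}_\infty\leq c\lambda$) and then in space (using $\norm{\nabla^2\zal}_\infty\leq c\lambda$), reducing everything to $r_0^{-2}\norm{\zal}_{L^1(\frac14\Qz)}$; this is then controlled by Jensen's inequality, the $L^\sigma$-stability from Lemma~\ref{lem:zlLpW2p} (so $\norm{\zal}_{L^\sigma(\frac14\Qz)}\leq c\norm{\bfz}_{L^\sigma(\frac13\Qz)}$), and the baseline~\eqref{eq:wlogl0}, with the gradient bound obtained by interpolating between the zero- and second-order $L^\infty$ estimates. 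Your Campanato-style chaining --- dyadic $\alpha$-parabolic cylinders from $Q_i$ (or, on the good set, from a Lebesgue point) up to $\frac13\Qz$, oscillation at each step controlled via~\eqref{eq:Qj1}--\eqref{eq:Qj2}, anchored at~\eqref{eq:wlogl0}, with Gagliardo--Nirenberg supplying the $L^\sigma(\frac13\Qz)$ control of $\nabla\bfz$ --- is a standard and workable alternative, but it is only sketched: to close it one must handle the stage where the chain cylinders stop being contained in $\frac13\Qz$ (since~\eqref{eq:Qj1}--\eqref{eq:Qj2} only see $\chi_{\frac13\Qz}\nabla^2\bfz$ and $\chi_{\frac13\Qz}\partial_t\bfz$), and one must run the chain so that the point from~\ref{itm:whit2} lies in every cylinder. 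The paper's route avoids all of this by re-using the already-proved $L^s$-stability of the truncation; yours duplicates some of that work but would ultimately succeed if carried out carefully.
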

}
 {
\begin{proof}
  Let $(t,x)\in Q_i$, then
\[
 \abs{\nabla^2\zal(t,x)}=\biggabs{\sum\limits_{j\in A_i}\nabla^2(\phi_j\bfz_j)(t,x)}\leq  \sum\limits_{j\in A_i}\abs{\nabla^2(\phi_j(\bfz_j-\bfz_i))(t,x)}
\]
because $\set{\phi_j}$ is a partition of unity. Now we find as all
norms on polynomials are equivalent, $\# A_j\leq c$ and
Lemma~\ref{lem:SP2} that
\[
 \abs{\nabla^2\zal(t,x)}\leq c\sum\limits_{j\in A_i}\frac{\norm{\bfz_i-\bfz_j}_{L^\infty(Q_i)}}{r_i^2}\leq c\lambda.
\]
Concerning the time derivative for $(t,x)\in Q_i$ as $\bfz_i$ is constant in time we find
\begin{align*}
 \abs{\partial_t\zal(t,x)}&=\biggabs{\partial_t\sum\limits_{j\in A_i}(\phi_j\bfz_j)(t,x)}\leq  \sum\limits_{j\in A_i}\abs{\partial_t(\phi_j)(\bfz_j-\bfz_i)(t,x)}\\
&\leq  \sum\limits_{j\in A_i}\frac{\norm{\bfz_i-\bfz_j}_{L^\infty(Q_i)}}{\alpha r_i^2}\leq \frac{c\lambda}{\alpha}.
\end{align*}
 {
The zero order term is estimated by \Poincare inequality; first in time and then in space
\begin{align*}
r_0^{-2}\norm{\zal}_{L^\infty(\frac 14\Iz; L^\infty(\frac 14\Bz))}
\leq c\alpha \norm{\partial_t\zal}_{L^\infty(\frac 14 \Qz)} + cr_0^{-2}\norm{\zal}_{L^1(\frac 14\Iz; L^\infty(\frac 14\Bz))}
\\
\leq c\lambda + 
c\norm{\nabla^2\zal}_{L^\infty(\frac 14 \Qz)}
+cr_0^{-2}\norm{\zal}_{L^1(\frac 14\Qz)}.
\end{align*}
This implies by the norm equivalence of polynomials, Jensen's inequality Lemma~\ref{lem:zlLpW2p} and \eqref{eq:wlogl0}
\[
  r_0^{-1}{\norm{\nabla \zal}_{L^\infty(\frac 14\Qz)}}+r_0^{-2}\norm{\zal}_{L^\infty(\frac 14\Qz)}\leq c\lambda+r_0^{-2}\norm{\bfz}_{L^\sigma(\frac 13\Qz)}\leq c \lambda.
\]
}
\end{proof}
}

The next lemma will control the time error we get when we apply the truncation as a test function.
\begin{lemma}
  \label{lem:zt1}
  For all $\zeta\in C^\infty_0(\frac 14 \Qz)$ with $\norm{\nabla^2
    \zeta}_\infty \leq c$ and $\lambda \geq \lambda_0$
  \begin{align*}
    \biggabs{ \int_{\frac 14 \Qz} \partial _t \big(\bfz -
      \zal\big)\,\Delta(\zeta \zal) \dxt} \leq c\, \alpha^{-1}
    \lambda^2\, \abs{\Oal},
  \end{align*}
  where the constant $c$ is independent of $\alpha$ and $\lambda$.
\end{lemma}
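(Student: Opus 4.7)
My plan is to avoid integration by parts in time and work directly from the pointwise expansion $\bfz - \zal = \sum_{i \in \mathcal{I}} \phi_i(\bfz - \bfz_i)$ provided by~\eqref{eq:defwl}. Since each $\bfz_i$ is constant in time,
\[
  \partial_t\big(\phi_i(\bfz - \bfz_i)\big) = (\partial_t \phi_i)(\bfz - \bfz_i) + \phi_i\,\partial_t \bfz,
\]
so the task reduces to controlling two sums of integrals over the Whitney cubes~$Q_i$, both tested against $\Delta(\zeta \zal)$.

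The first input is a pointwise $L^\infty$ bound for the test quantity. Leibniz gives $\Delta(\zeta \zal) = (\Delta \zeta)\,\zal + 2\nabla\zeta\cdot \nabla\zal + \zeta\,\Delta \zal$, and Lemma~\ref{lem:zlLip} combined with the Poincar\'e bounds $\norm{\zeta}_\infty + r_0 \norm{\nabla \zeta}_\infty \leq c\, r_0^2$ (valid since $\zeta$ has compact support in $\frac14\Qz$ and $\norm{\nabla^2\zeta}_\infty\leq c$) yields $\norm{\Delta(\zeta\zal)}_{L^\infty(\frac14\Qz)} \leq c\,\lambda$. The second input is that each Whitney cube sees the correct parabolic level-set information: property~\ref{itm:P3} gives $\abs{\partial_t \phi_i} \leq c\alpha^{-1} r_i^{-2}$; Lemma~\ref{lem:SP2}\ref{itm:SP2} gives $\alpha\dashint_{Q_i}\abs{\partial_t \bfz}\dxt \leq c\lambda$; and Lemma~\ref{lem:SP} together with Lemma~\ref{lem:SP2}\ref{itm:SP2} gives the Poincar\'e-type bound $\dashint_{Q_i}\abs{\bfz-\bfz_i}\dxt \leq c\, r_i^2 \lambda$.

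Combining these estimates on a single cube produces, for the $(\partial_t\phi_i)(\bfz-\bfz_i)$ piece,
\[
  \biggabs{\int_{Q_i}(\partial_t\phi_i)(\bfz-\bfz_i)\cdot \Delta(\zeta\zal)\dxt} \leq c\lambda\cdot\frac{c}{\alpha r_i^2}\cdot c\, r_i^2 \lambda\,\abs{Q_i} = c\,\alpha^{-1}\lambda^2\abs{Q_i},
\]
while for the $\phi_i\,\partial_t\bfz$ piece,
\[
  \biggabs{\int_{Q_i}\phi_i\,\partial_t\bfz\cdot\Delta(\zeta\zal)\dxt} \leq c\lambda\cdot \int_{Q_i}\abs{\partial_t\bfz}\dxt \leq c\,\alpha^{-1}\lambda^2\abs{Q_i}.
\]
Summing in $i\in\mathcal{I}$ and invoking the finite-overlap property~\ref{itm:whit4} together with $\bigcup_i \tfrac12 Q_i = \Oal$ yields $\sum_i \abs{Q_i}\leq c\abs{\Oal}$, which delivers the claimed estimate $c\,\alpha^{-1}\lambda^2\abs{\Oal}$.

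The main conceptual point, and the only real obstacle, is recognising that the two contributions scale identically: the singular factor $(\alpha r_i^2)^{-1}$ in $\partial_t\phi_i$ is exactly compensated by the parabolic Poincar\'e gain $r_i^2\lambda$ for $\bfz - \bfz_i$, while the $\phi_i\partial_t\bfz$ piece inherits the same factor $\alpha^{-1}\lambda$ directly from the definition of~$\Oal$. This is precisely why the parabolic Whitney scaling with $\alpha = \lambda^{2-p}$ was built into the construction.
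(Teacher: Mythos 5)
Your proposal is correct and takes essentially the same route as the paper's proof: both expand $\bfz-\zal=\sum_{i\in\mathcal{I}}\phi_i(\bfz-\bfz_i)$, use Lemma~\ref{lem:zlLip} to bound $\Delta(\zeta\zal)$ in $L^\infty(\frac14\Qz)$ by $c\lambda$, reduce to estimating $\partial_t(\phi_i(\bfz-\bfz_i))$ cube by cube via the Leibniz split and the Poincar\'e-type bound of Lemma~\ref{lem:SP} together with Lemma~\ref{lem:SP2}\ref{itm:SP2}, and sum with the finite overlap of the Whitney cubes. The only cosmetic difference is that you apply H\"older's inequality with exponents $1$ and $\infty$ on each $Q_i$, whereas the paper uses $\sigma$ and $\sigma'$; since Lemma~\ref{lem:SP2}\ref{itm:SP2} is exactly the $L^1$ version of the level-set bound (obtained from the $L^\sigma$ bound by Jensen), both versions are available and deliver the same estimate.
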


\begin{proof}
 {
We use H{\"o}lder's inequality and Lemma~\ref{lem:zlLip} to gain
  \begin{align*}
    (I) &:= \biggabs{ \int_{\frac 14 \Qz} \partial _t \big(\bfz -
      \zal\big)\,\Delta(\zeta \zal) \dxt}
    \\
    &\leq \sum_{i \in \mathcal{I}} \bigg(\int_{Q_i} \bigabs{\partial_t
      (\phi_i (\bfz-\bfz_i)}^\sigma\dxt\bigg)^\frac1\sigma\bigg(\int_{Q_i}
    \abs{\Delta(\zeta\zal)}^{\sigma'}\dxt\bigg)^\frac1{\sigma'}
    \\
    &\leq c\, \lambda \sum_{i \in \mathcal{I}} \abs{Q_i} \bigg(
    \dashint_{Q_i} \bigabs{\partial_t (\phi_i (\bfz-\bfz_i)}^\sigma \dxt
    \bigg)^{\frac 1\sigma}.
  \end{align*}
}
  With  {\eqref{eq:time}}, \eqref{eq:Qj1} and~\eqref{eq:Qj2}
 we get 
  \begin{align*}
    (I) &\leq c\, \lambda \sum_{i \in \mathcal{I}} \abs{Q_i} \Bigg(
     \alpha^{-1} \bigg( \dashint_{Q_j}
    |\nabla^2\bfz|^\sigma\dxt\bigg)^{\frac 1 \sigma} +
    \bigg(\dashint_{Q_j}| {\partial_t\bfz}|^\sigma \dxt \bigg)^{\frac 1\alpha}
    \Bigg)
    \\
    &\leq c\, \alpha^{-1} \lambda^2 \sum_{i \in \mathcal{I}} \abs{Q_i}
    \leq c\, \alpha^{-1} \lambda^2 \abs{\Oal},
  \end{align*}
  using in the last step the local finiteness of the $\set{Q_i}$.
\end{proof}

\begin{theorem}
  \label{thm:wlwhole}
  Let $1<p< \infty$ with $p,p'>\sigma$.  Let $\bfw_m$ and $\bfH_m$
  satisfy $\partial_t \Delta \bfw_m = - \divergence \divergence
  \bfH_m$ in the sense of distributions~$\mathcal{D}'(\tfrac 12 \Qz)$,
  see~\eqref{eq:wt}.  Further assume that $\bfw_m$ is a weak null
  sequence in $L^p(\frac 12 \Iz; W^{2,p}(\frac 12 \Bz))$ and a strong
  null sequence in $L^\sigma(\frac 12 \Qz)$. Further, assume that
  $\bfH_m = \bfH_{1,m} + \bfH_{2,m}$ such that $\bfH_{1,m}$ is a weak
  null sequence in $L^{p'}(\Qz)$ and $\bfH_{2,m}$ converges strongly
  to zero in $L^\sigma(\Qz)$.  Define $\bfz_m:=\Delta \Delta_{\frac 12
    \Bz}^{-2}\Delta\bfw_m$ pointwise in time on $\frac 12 \Iz$.  Then
  there is a double sequence $(\lambda_{m,k})\subset \R^+$ and $k_0
  \in \setN$ such that
  \begin{enumerate}[label=(\alph{*}),start=1]
  \item \label{itm:wlw:lmk} $2^{2^k} \leq \lambda_{m,k}\leq
    2^{2^{k+1}}$
  \end{enumerate}
  such that the double sequence $\bfz_{m,k} :=
  \bfz^{\alpha_{m,k}}_{\lambda_{m,k}}$ with $\alpha_{m,k} :=
  \lambda_{m,k}^{2-p}$ satisfies the following properties for all $k
  \geq k_0$
  \begin{enumerate}[label=(\alph{*}),start=2]
  \item \label{itm:wlw:setne} $\set{\zmk \neq \bfz} \subset
    \mathcal{O}_{m,k} := \mathcal{O}^{\alpha_{m,k}}_{\lambda_{m,k}}$,
 { 
 \item \label{itm:wlw:n2BMO} $\|\nabla^2\zmk\|_{L^\infty(\frac 14\Qz)}\leq c\,\lambda_{m,k}$,
  \item \label{itm:wlw:nweak} $\zmk \rightarrow 0$ and $\nabla\zmk
    \rightarrow 0$ in $L^\infty(\frac 14 \Qz)$ for $m \to \infty$ and
    $k$ fixed.
  \item \label{itm:wlw:n2weak} $\nabla^2 \zmk \rightharpoonup^* 0$ in
    $L^\infty(\frac 14 \Qz)$ for $m \to \infty$ and $k$ fixed.
    }
  \item \label{itm:wlw:time}We have for all $\zeta\in C^\infty_0(\frac
    14 \Qz)$\\$\displaystyle{ \bigg|\int \big(
      \partial _t \big(\bfz_m-
      \zmk\big)\big)\,\Delta(\zeta\zmk) \dxt\bigg| \leq
      c\,\lambda_{m,k}^p
      \abs{\mathcal{O}_{m,k}}}$,
  \item \label{itm:wlw:levelset} $\displaystyle{
      \limsup_{m\rightarrow\infty}\lambda_{m,k}^p
      \abs{\mathcal{O}_{m,k}}\leq c\,2^{-k}\,
      \sup_m (\norm{\nabla^2 \bfz_m}_p + c\,
      \norm{\bfH_{1,m}}_{p'}^{\frac 1{p-1}}) }$.
  \end{enumerate}
\end{theorem}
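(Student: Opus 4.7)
The plan is to combine the estimates on $\zal$ from Lemmas~\ref{lem:zlLpW2p}--\ref{lem:zt1} with a dyadic pigeonhole selection of $\lambda_{m,k}$, in the spirit of Refs.~\cite{DieRuzWol10,KinLew02}. The only preparatory step is a decomposition $\partial_t\bfz_m=\partial_t\bfz_{1,m}+\partial_t\bfz_{2,m}$ mirroring $\bfH_m=\bfH_{1,m}+\bfH_{2,m}$: by linearity of~\eqref{eq:zt} and Lemma~\ref{lem:zi} applied to each piece, one has $\norm{\partial_t\bfz_{1,m}}_{L^{p'}(\frac13\Qz)}\leq c\,\norm{\bfH_{1,m}}_{p'}$ bounded in $m$, and $\norm{\partial_t\bfz_{2,m}}_{L^\sigma(\frac13\Qz)}\leq c\,\norm{\bfH_{2,m}}_\sigma\to 0$.

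For the pigeonhole, fix $m,k$ and look at the $2^k$ dyadic values $\lambda=2^j$ with $j\in\setN\cap[2^k,2^{k+1})$, always coupled to $\alpha=\lambda^{2-p}$ so that $\lambda/\alpha=\lambda^{p-1}$. Definition~\eqref{eq:defOal} and subadditivity give
\begin{align*}
\Oal\subset \bigl\{\Mas(\chi\nabla^2\bfz_m)>\lambda\bigr\}\cup\bigl\{\Mas(\chi\partial_t\bfz_{1,m})>\tfrac12\lambda^{p-1}\bigr\}\cup\bigl\{\Mas(\chi\partial_t\bfz_{2,m})>\tfrac12\lambda^{p-1}\bigr\},
\end{align*}
with $\chi:=\chi_{\frac13\Qz}$. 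Using the distribution-function identity $\sum_j 2^{jp}\abs{\{g>2^j\}}\leq c\norm{g}_p^p$, the continuity of $\Mas$ on $L^p$ and $L^{p'}$ (both exceed $\sigma$), and the arithmetic $(p-1)p'=p$, the contributions of the first two level sets satisfy
\begin{align*}
\sum_{j=2^k}^{2^{k+1}-1}2^{jp}\Bigl(\bigabs{\{\Mas(\nabla^2\bfz_m)>2^j\}}+\bigabs{\{\Mas(\partial_t\bfz_{1,m})>\tfrac12 2^{j(p-1)}\}}\Bigr)\leq c\bigl(\norm{\nabla^2\bfz_m}_p^p+\norm{\bfH_{1,m}}_{p'}^{p'}\bigr).
\end{align*}
Averaging over the $2^k$ summands selects some $j_{m,k}$ where the joint measure is bounded by $c\,2^{-k}$ times the right-hand side. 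For the third level set, the weak-$(\sigma,\sigma)$ estimate yields $\lambda^p\abs{\{\cdot\}}\leq c\lambda^{p-(p-1)\sigma}\norm{\partial_t\bfz_{2,m}}_\sigma^\sigma$; since $\lambda_{m,k}\leq 2^{2^{k+1}}$ is constant in $m$ for fixed $k$ while $\norm{\bfH_{2,m}}_\sigma\to 0$, this piece vanishes as $m\to\infty$. Setting $\lambda_{m,k}:=2^{j_{m,k}}$ and $\alpha_{m,k}:=\lambda_{m,k}^{2-p}$ establishes~\ref{itm:wlw:lmk}, and passing to $\limsup_{m\to\infty}$ produces~\ref{itm:wlw:levelset}.

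The remaining items are short consequences of the machinery in place. Item~\ref{itm:wlw:setne} is built into~\eqref{eq:defwlalt}; \ref{itm:wlw:n2BMO} is Lemma~\ref{lem:zlLip}; \ref{itm:wlw:time} is Lemma~\ref{lem:zt1} combined with $\alpha_{m,k}^{-1}\lambda_{m,k}^2=\lambda_{m,k}^p$. For~\ref{itm:wlw:nweak}, for fixed $k$ the family $\{\zmk\}_m$ is uniformly bounded in $C^{1,1}_x\cap C^{0,1}_t(\tfrac14\Qz)$ by Lemma~\ref{lem:zlLip} (since $\lambda_{m,k}\leq 2^{2^{k+1}}$); combined with the strong convergence $\bfz_m\to 0$ in $L^\sigma(\tfrac13\Qz)$, which comes from $\bfw_m\to 0$ in $L^\sigma$ via Corollary~\ref{cor:mueller} and transfers to $\zmk$ through Lemma~\ref{lem:zlLpW2p}, a standard compactness-interpolation argument upgrades $L^\sigma$-convergence of $\zmk$ and $\nabla\zmk$ to uniform convergence. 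Finally~\ref{itm:wlw:n2weak} follows by testing $\nabla^2\zmk$ against $\phi\in L^1(\tfrac14\Qz)$, splitting $\tfrac14\Qz=(\tfrac14\Qz\setminus\Oal)\cup\Oal$, using $\nabla^2\zmk=\nabla^2\bfz_m\rightharpoonup 0$ on the complement and the uniform $L^\infty$-bound from~\ref{itm:wlw:n2BMO} together with $\abs{\Oal}\to 0$ on $\Oal$.

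The principal obstacle is the pigeonhole step: one must simultaneously encode the parabolic scaling $\alpha=\lambda^{2-p}$ so that the $\nabla^2\bfz$- and $\partial_t\bfz$-level sets are probed at comparable heights, invoke the arithmetic $(p-1)p'=p$ to translate the $L^{p'}$-bound on $\partial_t\bfz_{1,m}$ into a $\lambda^p$-weighted measure bound, and split off the $\bfH_{2,m}$-part, whose growth in $\lambda$ would otherwise be fatal, so that it only survives the $\limsup_{m\to\infty}$ as zero.
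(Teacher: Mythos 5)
Your proposal follows essentially the same route as the paper's proof: the decomposition $\partial_t\bfz_m=\bfh_{1,m}+\bfh_{2,m}$ induced by $\bfH_m=\bfH_{1,m}+\bfH_{2,m}$, a dyadic pigeonhole over $2^k$ values to select $\lambda_{m,k}$, bounding the first two pieces of $\mathcal{O}_{m,k}$ via the distribution-function identity in $L^p$ and $L^{p'}$ (with $(p-1)p'=p$), and killing the $\bfH_{2,m}$-piece by the weak-$L^\sigma$ estimate and $\limsup_{m\to\infty}$. Items \ref{itm:wlw:setne}, \ref{itm:wlw:n2BMO}, \ref{itm:wlw:nweak}, \ref{itm:wlw:time} are handled as in the paper, and all of this is fine.

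There is, however, a genuine gap in your argument for \ref{itm:wlw:n2weak}. You split $\int\nabla^2\zmk\,\phi=\int_{\mathcal{O}_{m,k}^\complement}\nabla^2\bfz_m\,\phi+\int_{\mathcal{O}_{m,k}}\nabla^2\zmk\,\phi$ and appeal to ``$\abs{\mathcal{O}_{m,k}}\to 0$'' (for $m\to\infty$, $k$ fixed) on the second term. But this limit does not hold: the level-set estimate \ref{itm:wlw:levelset} only gives a bound $\limsup_m\lambda_{m,k}^p\abs{\mathcal{O}_{m,k}}\leq c\,2^{-k}K$, i.e.\ $\abs{\mathcal{O}_{m,k}}$ is \emph{small} (like $2^{-k}\lambda_{m,k}^{-p}$) but not decaying in $m$. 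Indeed, $\nabla^2\bfz_m$ and $\bfh_{1,m}$ converge only \emph{weakly}, which does not give convergence in measure, so the maximal-function level sets $\{\Mas(\chi\abs{\nabla^2\bfz_m})>\lambda\}$ and $\{\alpha\Mas(\chi\abs{\bfh_{1,m}})>\lambda\}$ need not shrink as $m\to\infty$ for fixed $\lambda$. Moreover, the first term in your split does not converge for free either: $\nabla^2\bfz_m\rightharpoonup 0$ is tested there against $\phi\chi_{\mathcal{O}_{m,k}^\complement}$, which \emph{depends on $m$}, so weak convergence alone does not yield vanishing. The clean route (used by the paper) is: by \ref{itm:wlw:n2BMO}, for fixed $k$ the sequence $\nabla^2\zmk$ is bounded in $L^\infty(\frac14\Qz)$, so every subsequence has a weak*-convergent sub-subsequence; since \ref{itm:wlw:nweak} gives $\zmk\to 0$ uniformly, the distributional limit of $\nabla^2\zmk$ is $0$, identifying the weak*-limit as $0$; the usual sub-subsequence principle then yields weak*-convergence of the full sequence. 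Replacing your argument for \ref{itm:wlw:n2weak} by this closes the gap.
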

\begin{proof}
  Let us assume that $\lambda_{m,k}$ satisfies~\ref{itm:wlw:lmk}. We
  will choose the precise values of~$\lambda_{m,k}$ later. Due to
  Lemma~\ref{lem:zi} we have $\bfz_m \weakto 0$ in $L^p(\frac 14 \Iz;
  W^{2,p}(\frac 14 \Bz))$; this is due to the fact that the operator
  $\bfw\mapsto\Delta \Delta_{\frac 12 \Bz}^{-2}\Delta \bfw =\bfz$ is
  linear and continuous in $L^p(\frac 14 \Iz; W^{2,p}(\frac 14 \Bz))$.
  Then the properties \ref{itm:wlw:setne}
and
  \ref{itm:wlw:n2BMO} follow from Lemma~\ref{lem:zlLip}.  Moreover,
  Corollary~\ref{cor:mueller} ensures that the strong convergence in
  $L^\sigma(\frac 12 \Qz)$ transfers from $\bfw_m$ to $\bfz_m$. By
  Lemma~\ref{lem:zlLpW2p} we get the same for~$\bfz_{m,k}$
    { and that }the sequence $\nabla^2
  \bfz_{m,k}$ is for fixed~$k$ and~$s$ bounded in $L^s(\frac 14 \Qz)$.
  The combination of these convergence properties implies by
  interpolation~\ref{itm:wlw:nweak}.  Moreover, the boundedness of
  $\nabla^2 \zmk$ in $L^s(\frac 14 \Qz)$ implies the weak convergence
  of a subsequence. Since~\ref{itm:wlw:nweak} ensures that the limit
  is zero, we get by the usual arguments the weak convergence of the
  whole sequence. This proves~\ref{itm:wlw:n2weak}.
  Moreover,~\ref{itm:wlw:time} follows by Lemma~\ref{lem:zt1} and the
  choice of~$\alpha_{m,k}$.


  %
  It remains to choose~$2^{2^k}\leq\lambda_{m,k}\leq 2^{2^{k+1}}$ such
  that~\ref{itm:wlw:levelset} holds.  { We use the decomposition
  \begin{align*}
  \partial_t\bfz_m&=\Delta \Delta^{-2}_{\frac12\Bz}\Div\Div\bfH_m=\Delta \Delta^{-2}_{\frac12\Bz}\Div\Div\bfH_{1,m}+\Delta \Delta^{-2}_{\frac12\Bz}\Div\Div\bfH_{2,m}\\
  &=:\bfh_{1,m}+\bfh_{2,m}.
  \end{align*}
  }
We divide
  \begin{align*}
    \mathcal{O}_{m,k} &=
    \set{\Masmk(\chi_{\frac13 \Qz}\abs{\nabla^2 \zmk})>\lambda_{m,k}}
    \cup \set{\alpha_{m,k} \Masmk(\chi_{\frac13 \Qz}\abs{ {\partial_t\bfz_m}}) >
      \lambda_{m,k}}
    \\
    &\subset \set{\Masmk(\chi_{\frac13 \Qz}\abs{\nabla^2
        \zmk})>\lambda_{m,k}} \cup \set{\alpha_{m,k}
      \Masmk(\chi_{\frac13 \Qz}\abs{ {\bfh_{1,m}}}) > \tfrac 12
      \lambda_{m,k}}
    \\
    &\quad \cup \set{\alpha_{m,k} \Masmk(\chi_{\frac13
        \Qz}\abs{ {\bfh_{2,m}}}) > \tfrac 12 \lambda_{m,k}}
    \\
    &=: I \cup II \cup III.
  \end{align*}
  Define 
  \begin{align*}
    g_m := 2\Masmk(\chi_{\frac13 \Qz}\abs{\nabla^2 \bfz_m})+ \Big(2\,
    \Masmk(\chi_{\frac13 \Qz}\abs{ {\bfh_{1,m}}} ) \Big)^{\frac{1}{p-1}}.
  \end{align*}
  Then by the boundedness of $\mathcal{M}_\sigma$ on $L^p$ and
  $L^{p'}$ (using $p,p'>\sigma$)  {as well as Corollary
    \ref{cor:divdiv}} we have
  %
  \begin{align*}
    \norm{g_m}_p &\leq \bignorm{2\Masmk(\chi_{\frac13 \Qz}\abs{\nabla^2 \bfz_m})}_p +
    \bignorm{(2\, \Masmk(\chi_{\frac13 \Qz}\abs{ {\bfh_{1,m}}} ))^{\frac
        1{p-1}}}_p
    \\
    &= \bignorm{2\Masmk(\chi_{\frac13 \Qz}\abs{\nabla^2 \bfz_m})}_p + \bignorm{2\,
      \Masmk(\chi_{\frac13 \Qz}\abs{ {\bfh_{1,m}}} )}_{p'}^{\frac 1{p-1}}
    \\
    &\leq c\,  {\bignorm{\nabla^2 \bfz_m}_{L^p(\frac13\Qz)} + c\,
    \bignorm{\bfh_{1,m}}_{L^{p'}(\frac12\Qz)}^{\frac 1{p-1}}}\\
    & {\leq  c\, \bignorm{\nabla^2 \bfz_m}_{L^p(\frac13\Qz)} 
+ c\,
    \bignorm{\bfH_{1,m}}_{L^{p'}(\frac12\Qz)}^{\frac 1{p-1}}.}
  \end{align*}
  Let $K := \sup_m (\norm{\nabla^2 \bfz_m}_p + c\,
  \norm{ {\bfh_{1,m}}}_{p'}^{\frac 1{p-1}})$.  In particular,
  $\norm{g_m}_p \leq K$ uniformly in~$k$.  Note that
  \begin{align*}
    I \cup II &= \set{\Masmk(\chi_{\frac13 \Qz}\abs{\nabla^2
        \zmk})>\lambda_{m,k}} \cup \set{(\Masmk(\chi_{\frac13
        \Qz}\abs{ {\bfh_{1,m}}}))^{\frac 1{p-1}} > \lambda_{m,k}}
    \\
    &\subset \set{2\Masmk(\chi_{\frac13 \Qz}\abs{\nabla^2
        \zmk})+(2\,\Masmk(\chi_{\frac13 \Qz}\abs{ {\bfh_{1,m}}}))^{\frac
        1{p-1}} > \lambda_{m,k}}
    \\
    &= \set{g_m > \lambda_{m,k}}.
  \end{align*}
  We estimate
  \begin{align*}
    \int_{\R^{n+1}} \abs{g_m}^p \,\dx &= \int_{\R^{n+1}} \int_0^\infty
    \frac{1}{p} t^{p-1} \chi_{\{\abs{g_m}>t\}} \,\dt \,\dx \geq
    \int_{\R^{n+1}} \sum_{k \in \N} \frac{1}{p} 2^k
    \chi_{\{\abs{g_m}>2^{k+1}\}} \,\dx
    \\
    &\geq \sum_{j \in \N} \sum_{k=2^j}^{2^{j+1}-1} \frac{1}{p}
    2^{kp} \abs{\{\abs{g_m}>2^{k+1}\}}.
  \end{align*}
  For fixed $m,j$ the sum over~$k$ involves $2^j$ summands and not all
  of them can be large.  Consequently there exists $\lambda_{m,k} \in
  \set{2^{2^k+1}, \dots, 2^{2^{k+1}}} $, such that 
  \begin{align*}
    \lambda_{m,k}^p \bigabs{ \{ \abs{g_m} >\lambda_{m,k} \}}
    \,\leq\,c \, 2^{-k}\, K^p
  \end{align*}
  uniformly in $m$ and $k$. This proves
  \begin{align}
    \label{eq:4.17b}
    \lambda_{m,k}^p \bigabs{I \cup II} &\leq \lambda_{m,k}^p \bigabs{
      \set{g_m > \lambda_{m,k} }} \leq c\, 2^{-k} K^p.
  \end{align}
  On the other hand with the weak-$L^\sigma$ estimate for
  $\Masmk$ we gain
  \begin{align*}
    \limsup_{m \to \infty} \Big(\lambda_{m,k}^p \abs{III}\Big) &=
    \limsup_{m \to \infty} \Big( \lambda_{m,k}^p \bigabs{
      \set{\alpha_{m,k} \Masmk(\chi_{\frac13 \Qz}\abs{ {\bfh_{2,m}}}) >
        \tfrac 12 \lambda_{m,k}} } \Big)
    \\
    &\leq \limsup_{m \to \infty} \Big( c\,\lambda_{m,k}^{p}
    \bignorm{ {\bfh_{2,m}}}_{L^\sigma(\frac13 \Qz)}^\sigma
    (\alpha_{m,k}/\lambda_{m,k})^\sigma \Big).
  \end{align*}
  Since $2^{2^k+1} \leq \lambda_{m,k} \leq 2^{2^{k+1}}$, 
  $\alpha_{m,k} = \lambda_{m,k}^{2-p}$
  and  {$\bfh_{2,m} \to 0$ in
  $L^\sigma(\frac12 \Qz)$ (which is a consequence of $\bfH_{2,m} \to 0$ in
  $L^\sigma(\frac12 \Qz)$ and Corollary \ref{cor:divdiv})}, it follows that
  \begin{align*}
    \limsup_{m \to \infty} \Big(\lambda_{m,k}^p \abs{III}\Big) &= 0.
  \end{align*}
  This and~\eqref{eq:4.17b} prove~\ref{itm:wlw:levelset}.
\end{proof}

\begin{theorem}
  \label{thm:ulwhole}
  Let $1<p< \infty$ with $p,p'>\sigma$.  Let $\bfu_m$ and $\bfG_m$
  satisfy $\partial_t \bfu_m = - \divergence \bfG_m$ in the sense of
  distributions~$\mathcal{D}_{\divergence}'( \Qz)$.  Assume
  that $\bfu_m$ is a weak null sequence in $L^p(\Iz; W^{1,p}(\Bz))$
  and a strong null sequence in~$L^\sigma(\Qz)$  {and bounded in $L^\infty(\Iz,T;L^\sigma(\Bz))$}. Further assume that
  $\bfG_m = \bfG_{1,m} + \bfG_{2,m}$ such that $\bfG_{1,m}$ is a weak
  null sequence in $L^{p'}(\Qz)$ and $\bfG_{2,m}$ converges strongly
  to zero in $L^\sigma(\Qz)$. Then there is a double sequence
  $(\lambda_{m,k})\subset \R^+$ and $k_0 \in \setN$ with
  \begin{enumerate}[label=(\alph{*}),start=1]
  \item \label{itm:wlu:lmk} $2^{2^k} \leq \lambda_{m,k}\leq
    2^{2^{k+1}}$
  \end{enumerate}
  such that the double sequences $\bfu_{m,k} :=
  \bfu^{\alpha_{m,k}}_{\lambda_{m,k}} \in L^1(\Qz)$, $\alpha_{m,k} :=
  \lambda_{m,k}^{2-p}$ and $\mathcal{O}_{m,k} :=
  \mathcal{O}^{\alpha_{m,k}}_{\lambda_{m,k}}$ (defined in
  Theorem~\ref{thm:wlwhole}) satisfy the following properties for all
  $k \geq k_0$
  \begin{enumerate}[label=(\alph{*}),start=2]
  \item \label{itm:wlu:ns} $\umk\in L^s(\frac
    14\Iz;W^{1,s}_{0,\divergence}(\frac 16\Bz))$ for all $s<\infty$
    and $\support(\bfu_{m,k}) \subset \frac 16 \Qz$.
  \item \label{itm:wlu:setne} $\umk= \bfu_m$ a.e. on
    $ \frac 18 \Qz\setminus\mathcal{O}_{m,k}$.
   {\item \label{itm:wlu:n2BMO} $\|\nabla\umk\|_{L^\infty(\frac 14 \Qz)}\leq c\,\lambda_{m,k}$,
  \item \label{itm:wlu:nweak} $\umk \rightarrow 0$ in
    $L^\infty(\frac 14 \Qz)$ for $m \to \infty$ and $k$ fixed.
  \item \label{itm:wlu:n2weak} $\nabla \umk \rightharpoonup^* 0$ in
    $L^\infty(\frac 14 \Qz)$ for $m \to \infty$ and $k$ fixed.}
  \item \label{itm:wlu:levelset} $\displaystyle{
      \limsup_{m\rightarrow\infty}\lambda_{m,k}^p
      \abs{\mathcal{O}_{m,k}} \leq c\,2^{-k}.}$
  \item
    \label{itm:wlu:time}$\displaystyle{\limsup_{m\rightarrow\infty}
      \bigg|\int \bfG_m:\nabla\umk \dxt\bigg| \leq c\,\lambda_{m,k}^p
      \abs{\mathcal{O}_{m,k}}}$
  \end{enumerate}
\end{theorem}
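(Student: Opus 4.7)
The plan is to build $\umk$ as a curl so that it is automatically solenoidal, and to rectify the truncated potential $\zmk$ from Theorem~\ref{thm:wlwhole} by a harmonic correction so that $\umk$ coincides with $\bfu_m$ outside $\mathcal{O}_{m,k}$. By construction of $\bfz_m$ via $\Delta_{\frac12\Bz}^{-2}$, the difference $\bfw_m-\bfz_m$ is spatially harmonic on $\frac12\Bz$ for every $t$. I fix a cutoff $\xi\in C^\infty_0(\tfrac16\Qz)$ with $\xi\equiv 1$ on $\tfrac18\Qz$ and $\alpha_{m,k}$-parabolic scaling $|\partial_t\xi|\lesssim\alpha_{m,k}^{-1}$, $|\nabla^j\xi|\lesssim 1$, and set
\[
  \umk := \curl\bigl(\xi\,(\zmk+\bfw_m-\bfz_m)\bigr).
\]
Being a curl, $\umk$ is divergence-free with support in $\tfrac16\Qz$, which is~\ref{itm:wlu:ns}. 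On $\tfrac18\Qz\setminus\mathcal{O}_{m,k}$ we have $\xi\equiv 1$ and $\zmk=\bfz_m$, so $\umk=\curl(\bfz_m+\bfw_m-\bfz_m)=\curl\bfw_m=\bfu_m$, which yields~\ref{itm:wlu:setne}.

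For~\ref{itm:wlu:n2BMO}--\ref{itm:wlu:n2weak} I expand $\nabla\curl(\xi(\zmk+\bfw_m-\bfz_m))$: Lemma~\ref{lem:zlLip} controls the $\zmk$-contribution by $c\lambda_{m,k}$, while interior harmonic regularity on $\tfrac13\Bz$ gives $\|\bfw_m-\bfz_m\|_{C^k(\tfrac13\Bz)}\leq c\|\bfw_m-\bfz_m\|_{L^1(\tfrac12\Bz)}$, and the right hand side tends to zero because both $\bfw_m$ and $\bfz_m$ do in $L^\sigma$. This gives~\ref{itm:wlu:n2BMO}, and combined with~\ref{itm:wlw:nweak} and~\ref{itm:wlw:n2weak} of Theorem~\ref{thm:wlwhole} also~\ref{itm:wlu:nweak} and~\ref{itm:wlu:n2weak}. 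Property~\ref{itm:wlu:levelset} is immediate from~\ref{itm:wlw:levelset} of Theorem~\ref{thm:wlwhole} since the bad set $\mathcal{O}_{m,k}$ is unchanged.

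The heart of the proof is~\ref{itm:wlu:time}. Since $\umk$ is a compactly supported solenoidal test function, the weak equation yields
\[
  \int \bfG_m:\nabla\umk\dxt = \int \partial_t\bfu_m\cdot\umk\dxt.
\]
Writing $\bfu_m=\curl\bfw_m$ and $\umk=\curl(\xi(\zmk+\bfw_m-\bfz_m))$, integrating by parts in space using $\curl\curl=-\Delta+\nabla\divergence$ together with $\divergence\bfw_m=0$ transforms the right hand side into $-\int\partial_t\bfw_m\cdot\Delta(\xi(\zmk+\bfw_m-\bfz_m))\dxt$. Splitting $\bfw_m=\bfz_m+(\bfw_m-\bfz_m)$, the $\partial_t(\bfw_m-\bfz_m)$-contribution is killed by a further spatial integration by parts because $\Delta\partial_t(\bfw_m-\bfz_m)=0$, leaving $-\int\partial_t\bfz_m\cdot\Delta(\xi(\zmk+\bfw_m-\bfz_m))\dxt$. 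Using $\Delta(\bfw_m-\bfz_m)=0$, the $\Delta(\xi(\bfw_m-\bfz_m))$ term involves only $\bfw_m-\bfz_m$ and $\nabla(\bfw_m-\bfz_m)$, both vanishing uniformly on $\supp(\xi)$ as $m\to\infty$, so this piece is $o(1)$. Decomposing $\partial_t\bfz_m=\partial_t(\bfz_m-\zmk)+\partial_t\zmk$, the time estimate~\ref{itm:wlw:time} of Theorem~\ref{thm:wlwhole} applied with $\zeta=\xi$ bounds $-\int\partial_t(\bfz_m-\zmk)\cdot\Delta(\xi\zmk)\dxt$ by $c\lambda_{m,k}^p|\mathcal{O}_{m,k}|$, which is the target.

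The main obstacle is the leftover term $-\int\partial_t\zmk\cdot\Delta(\xi\zmk)\dxt$; pointwise estimates alone cannot close it because $\supp(\xi)$ is much larger than $\mathcal{O}_{m,k}$. I would integrate by parts in space and in time, using the compact support of $\xi$, to rewrite it as
\[
  \tfrac12\int\partial_t\xi\,|\nabla\zmk|^2\dxt + \int\nabla\xi\cdot\nabla\zmk\,\partial_t\zmk\dxt - \tfrac12\int|\zmk|^2\partial_t\Delta\xi\dxt,
\]
and then invoke the uniform convergences $\zmk,\nabla\zmk\to 0$ in $L^\infty(\tfrac14\Qz)$ from~\ref{itm:wlw:nweak} of Theorem~\ref{thm:wlwhole}, combined with the pointwise bounds $|\partial_t\zmk|\leq c\lambda_{m,k}/\alpha_{m,k}$ and $|\zmk|\leq c\lambda_{m,k}$ from Lemma~\ref{lem:zlLip}, to conclude that this leftover is $o(1)$ for each fixed $k$ as $m\to\infty$. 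Taking $\limsup$ then produces the asserted estimate.
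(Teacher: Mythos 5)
Your construction $\umk=\curl(\xi(\zmk+\bfw_m-\bfz_m))$ is exactly the paper's $\curl(\zeta\zmk)+\curl(\zeta(\bfw_m-\bfz_m))$, and the reduction of $\int\bfG_m:\nabla\umk\,\dxt$ to the three terms (the $\Delta(\zeta(\bfw_m-\bfz_m))$-contribution, the $\partial_t(\bfz_m-\zmk)$-contribution handled by Theorem~\ref{thm:wlwhole}\ref{itm:wlw:time}, and the leftover $\int\partial_t\zmk\,\Delta(\zeta\zmk)$) is the same as in the paper, so this is essentially the paper's proof. The one genuine variant is your treatment of the leftover term: you use the pointwise bound $\|\partial_t\zmk\|_{L^\infty}\leq c\lambda_{m,k}/\alpha_{m,k}=c\lambda_{m,k}^{p-1}$ from Lemma~\ref{lem:zlLip} (uniformly bounded in $m$ for fixed $k$) together with $\zmk,\nabla\zmk\to 0$ in $L^\infty$; the paper instead bounds $\|\partial_t\zmk\|_{L^\sigma}$ via Lemma~\ref{lem:zlLpW2p} and Lemma~\ref{lem:zi} (so via $\bfG_m$ and $\nabla^2\bfz_m$) and closes by H{\"o}lder. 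Your route is slightly shorter because it avoids passing through $\bfH_m$; both are valid since for fixed $k$ the scale $\lambda_{m,k}$ is bounded. Two small points you should make explicit: for the $\Delta(\xi(\bfw_m-\bfz_m))$-term to be $o(1)$ you need not only the uniform smallness of $\bfw_m-\bfz_m$ on $\supp\xi$ but also the uniform $L^\sigma$-bound on $\partial_t\bfz_m$ coming from Lemma~\ref{lem:zi}; and your displayed integration-by-parts identity for the leftover term has sign slips (harmless, since only absolute values matter) and one should be clear that the manipulations involving $\partial_t\bfw_m$ are carried out in the weak formulation (only $\partial_t\Delta\bfw_m=\partial_t\Delta\bfz_m$ and $\partial_t\bfz_m\in L^\sigma$ are actually available).
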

\begin{proof} 
  We define pointwise in time on $\Iz$
  \begin{align*}
    \tilde{\bfu}_m&:=\gamma\bfu_m-\Bog_{\Bz \setminus \frac 12
      \Bz}(\nabla\gamma\cdot \bfu_m),
    \\
    \bfw_m&:=\curl^{-1}\tilde{\bfu}_m,
    \\
    \bfz_m &:=\Delta\Delta_{\frac 12 \Qz}^{-2}\Delta\bfw_m,
  \end{align*}
  where  $\gamma\in C^\infty_0(\Qz)$ with $\chi_{\frac 12
    \Qz}\leq\gamma\leq \chi_{\Qz}$. Then we apply Theorem
  \ref{thm:wlwhole} to the sequence $\bfz_m$. Finally, let
  \begin{align}
    \label{eq:def-umk}
    \umk:=\curl(\zeta\zmk)+\curl(\zeta(\bfw_m-\bfz_m)),
  \end{align}
  where $\zeta\in C^\infty_0(\frac 16 \Qz)$ with $\chi_{\frac 18
    \Qz}\leq\zeta\leq \chi_{\frac 16 \Qz}$. This means on $\frac 18
  \Qz$ we have 
  \begin{align*}
    \umk&= \bfu_m + \curl(\zmk-\bfz_m).
  \end{align*}
  Note that $\curl(\bfw_m - \bfz_m)$ is harmonic (in space) on $\frac
  12 \Qz$  {and bounded in time due to the assumption that $\bfu_m$ is bounded uniformly in $L^\infty(\Iz;L^\sigma(\Bz))$ which transfers to $\bfw_m$ and $\bfz_m$ by Lemma \ref{lem:defw} and \ref{lem:defz}}. This allows us to estimate the higher order spaces
  derivatives on $\frac 14 \Qz$ by lower order ones on~$\frac 12 \Qz$.
  This,~\eqref{eq:def-umk} and Theorem \ref{thm:wlwhole} immediately
  imply all claimed properties except
  \ref{itm:wlu:time}.

  The claim of~\ref{itm:wlu:levelset}
  follows exactly as
~\ref{itm:wlw:levelset}
  of Theorem~\ref{thm:wlwhole}.

  Let us prove~\ref{itm:wlu:time}. It follows by simple density
  arguments, that $\umk$ is an admissible test
  function for the equation $\partial_t \bfu_m = - \divergence
  \bfG_m$. We thus get
  \begin{align*}
    \lefteqn{\int\bfG_m:\nabla\umk \dxt} \quad & 
    \\
    &=\int
    \partial _t \bfu_m
    \,\umk \dxt
    \\
    &=\int \big(
    \partial _t \curl\bfw_m\big)\curl\big(\zeta\zmk
    \big) \dxt
    +\int \big(
    \partial _t \curl\bfw_m\big)\curl\big(\zeta\big(\bfw_m-\bfz_m
    \big)\big) \dxt
    \\
    &=-\int \big(
    \partial _t \bfz_m\big)\Delta\big(\zeta
    \zmk\big) \dxt -\int \big(
    \partial _t\bfz_m\big)\Delta\big(\zeta\big(\bfw_m-\bfz_m
    \big)\big) \dxt
    \\
    &=:T_1+T_2.
  \end{align*}
  Here we took into account $\curl \curl \bfw_m = -\Delta \bfw_m$ (due
  to $\divergence \bfw_m=0$) and $\Delta\bfw_m=\Delta\bfz_m$. By
  assumption $\bfG_m$ is bounded in $L^\sigma(\Qz)$. 

  Using regularity
  properties of harmonic functions (for $\bfw_m - \bfz_m$) as well as
  Lemma \ref{lem:zi} and Lemma \ref{lem:defw} we get (after choosing
  a subsequence)
  \begin{align*}
    \bigg(\dashint_{\Qz} |\Delta\big(\zeta\big(\bfw_m-\bfz_m
    \big)\big)\big|^{\sigma'}\dxt\bigg)^{\frac 1{\sigma'}}&\leq c\,
    r_0^{-2} \bigg(\dashint_{\frac 14 \Qz}|\bfw_m-\bfz_m
    \big|^{\frac{3\sigma}{3+\sigma}}\dxt\bigg)^{\frac{3+\sigma}{3\sigma}}
    \\
    &\leq c\,r_0^{-2} \bigg(\dashint_{\frac 12 \Qz}|\bfw_m
    \big|^{\frac{3\sigma}{3+\sigma}}\dxt\bigg)^{\frac{3+\sigma}{3\sigma}}
    \\
    &\leq c\, r_0^{-3} \bigg(\dashint_{\Qz}|\tilde{\bfu}_m
    \big|^{\sigma}\dxt\bigg)^{\frac{1}{\sigma}}\longrightarrow 0
    \quad\text{as} \quad m\rightarrow\infty.
  \end{align*}
  Since additionally $\partial_t \bfz_m$ is uniformly bounded
  in~$L^\sigma(\frac 12 \Qz)$ by Lemma~\ref{lem:zi}, we get $T_2 \to
  0$ as $m \to \infty$.

  Furthermore, we have
  \begin{align*}
    T_1=\int\big(\partial_t \big(
    \bfz_m-\zmk\big)\big)\Delta\big(\zeta
    \zmk\big) \dxt+\int \big(
    \partial _t \zmk\big)\Delta\big(\zeta
    \zmk\big) \dxt =: T_{1,1} + T_{1,2},
  \end{align*}
  where the first term can be bounded using Theorem \ref{thm:wlwhole}
  \ref{itm:wlw:time}. So it remains to show that 
  \begin{align*}
    T_{1,2}:=\int \big(
    \partial _t \zmk\big)\Delta\big(\zeta
    \zmk\big) \dxt \longrightarrow 0 \qquad \text{as $m \to \infty$}.
  \end{align*}
  We have
  \begin{align*}
    T_{1,2}&=-\int \frac 12 
    \partial _t (|\nabla \zmk|^2) \zeta \dxt + \int \big(
    \partial _t \zmk\big)\divergence \big(\nabla\zeta\otimes
    \zmk\big) \dxt\\
    &=\int \frac 12 |\nabla \zmk|^2\partial_t\zeta \dxt + \int \big(
    \partial _t \zmk\big)\divergence \big(\nabla\zeta\otimes
    \zmk\big) \dxt
  \end{align*}
  The first term is estimated by
  Theorem~\ref{thm:wlwhole}\ref{itm:wlw:nweak}. For the second we use
   {Lemma~\ref{lem:zlLpW2p} and} Lemma \ref{lem:zi} ($s=\sigma$)
 to find
  \begin{align*}
    \lefteqn{ \int \abs{\partial _t \zmk}\abs{\divergence
        \big(\nabla\zeta\otimes \zmk\big)} \dxt}
    \\
    &\leq c\bigg(\int_{\frac13Q_0}
    \abs{\bfG_m}^\sigma+\abs{\nabla^2\bfz_m}^\sigma\dxt\bigg)^\frac1\sigma
    \bigg(\int_{\frac13Q_0} {\abs{\nabla\bfz_{m,k}}^{\sigma'}+\abs{\bfz_{m,k}}^{\sigma'}}
    \dxt\bigg)^\frac1{\sigma'}.
  \end{align*}
  Now because $\bfG_m$ and $\nabla^2\bfz_m$ are uniformly bounded in
  $L^\sigma(\frac12Q_0)$ we find by
  Theorem~\ref{thm:wlwhole}~\ref{itm:wlw:nweak}, that
  \begin{align*}
    \lim_{m\rightarrow\infty}T_{1,2}=0,
  \end{align*} 
  which proves the claim of \ref{itm:wlu:time}.
\end{proof}
The following corollary is useful in the application of the solenoidal
Lipschitz truncation.
\begin{corollary}
  \label{cor:main}
  Let all assumptions of Theorem~\ref{thm:ulwhole} be satisfied with
  $\zeta\in C^\infty_0(\frac 16 \Qz)$ with $\chi_{\frac 18
    \Qz}\leq\zeta\leq \chi_{\frac 16 \Qz}$ as in the proof of
  Theorem~\ref{thm:ulwhole}. If additionally $\bfu_m$ is uniformly
  bounded in $L^\infty(\Iz, L^\sigma(\Bz))$, then for every $\bfK \in
  L^{p'}(\frac 16 Q_0)$
  \begin{align*}
    \limsup_{m\rightarrow\infty} \bigg|\int
    \big(\big(\bfG_{1,m}+\bfK):\nabla\bfu_m \big) \zeta
    \chi_{\mathcal{O}_{m,k}^\complement} \dxt\bigg| \leq c\, 2^{-k/p}.
  \end{align*}
\end{corollary}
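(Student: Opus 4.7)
The plan is to split $(\bfG_{1,m}+\bfK):\nabla\bfu_m = \bfG_{1,m}:\nabla\bfu_m + \bfK:\nabla\bfu_m$ and handle the two pieces separately. For the $\bfK$-piece, since $|\mathcal{O}_{m,k}|\to 0$ as $m\to\infty$ by Theorem~\ref{thm:ulwhole}\ref{itm:wlu:levelset}, dominated convergence gives $\bfK\zeta\chi_{\mathcal{O}_{m,k}^\complement}\to\bfK\zeta$ strongly in $L^{p'}(\tfrac 16 \Qz)$; paired against $\nabla\bfu_m\rightharpoonup 0$ in $L^p$, this contribution tends to $0$ as $m\to\infty$ and is trivially absorbed in the bound $c\,2^{-k/p}$.

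For the $\bfG_{1,m}$-piece I would exploit that $\bfu_{m,k}$ is, by~\ref{itm:wlu:ns}, an admissible divergence-free compactly supported test function for the equation $\partial_t\bfu_m=-\divergence\bfG_m$. Combined with properties~\ref{itm:wlu:time}--\ref{itm:wlu:levelset} and the strong convergence $\bfG_{2,m}\to 0$ in $L^\sigma$ (while $\nabla\bfu_{m,k}$ stays bounded in every $L^s$ for fixed~$k$), this yields
\[
\limsup_{m\to\infty}\Bigl|\int\bfG_{1,m}:\nabla\bfu_{m,k}\,\dxt\Bigr|\le c\,2^{-k}.
\]
Splitting the integrand via $\chi_{\mathcal{O}_{m,k}}+\chi_{\mathcal{O}_{m,k}^\complement}$, the bad-set part is H\"older-bounded by $\|\bfG_{1,m}\|_{p'}\|\nabla\bfu_{m,k}\|_\infty|\mathcal{O}_{m,k}|^{1/p}\le c\lambda_{m,k}|\mathcal{O}_{m,k}|^{1/p}$, which is $\le c\,2^{-k/p}$ in the limsup by~\ref{itm:wlu:n2BMO} and~\ref{itm:wlu:levelset}. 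Consequently $\limsup_m|\int\bfG_{1,m}:\nabla\bfu_{m,k}\,\chi_{\mathcal{O}_{m,k}^\complement}\,\dxt|\le c\,2^{-k/p}$.

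It remains to replace $\nabla\bfu_{m,k}$ by $\zeta\nabla\bfu_m$ on $\mathcal{O}_{m,k}^\complement$. The decisive identity there, derived from the construction of $\bfu_{m,k}$ using $\bfz_{m,k}=\bfz_m$ (Theorem~\ref{thm:wlwhole}\ref{itm:wlw:setne}) and $\curl\bfw_m=\bfu_m$ (Lemma~\ref{lem:defw}), is
\[
\bfu_{m,k}=\curl(\zeta\bfw_m)=\zeta\bfu_m+\nabla\zeta\times\bfw_m,
\]
so $\zeta\nabla\bfu_m=\nabla\bfu_{m,k}-\bfu_m\otimes\nabla\zeta-\nabla(\nabla\zeta\times\bfw_m)$ on $\mathcal{O}_{m,k}^\complement$. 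The two correction terms are supported on the annulus $\tfrac 16\Qz\setminus\tfrac 18\Qz$ where $\nabla\zeta\ne 0$. The first vanishes as $m\to\infty$ by Aubin--Lions-type strong convergence of $\bfu_m$ on compact sets (combining the $L^p(W^{1,p})$- and $L^\infty(L^\sigma)$-bounds with $\bfu_m\to 0$ in $L^\sigma$), tested against the $L^{p'}$-bounded factor $\bfG_{1,m}$. The second term, involving $\nabla\bfw_m$, is treated by writing $\bfw_m=\bfz_m+(\bfw_m-\bfz_m)$: the harmonic piece has interior regularity controlled by lower-order norms that converge strongly to zero, and for $\bfz_m$ one invokes the strong convergence from Lemma~\ref{lem:defz}. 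The main obstacle is precisely this last contribution, where upgrading weak to strong convergence of $\nabla\bfw_m$ on the annulus requires the harmonic smoothing on $\tfrac 12\Qz$ together with the convolution structure of $\curl^{-1}$.
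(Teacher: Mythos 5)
Your route is recognizably the same family of arguments as the paper's, but the order in which you perform the operations (split off $\bfK$, remove the bad set, then pass from $\nabla\umk$ to $\zeta\nabla\bfu_m$) creates two genuine gaps that the paper's ordering avoids.

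\emph{The $\bfK$-piece.} You assert that $\abs{\mathcal{O}_{m,k}}\to 0$ as $m\to\infty$ for fixed $k$, and invoke dominated convergence. That is not what Theorem~\ref{thm:ulwhole}\ref{itm:wlu:levelset} gives: it gives $\limsup_m \lambda_{m,k}^p\abs{\mathcal{O}_{m,k}}\le c\,2^{-k}$, and since $\lambda_{m,k}$ stays in $[2^{2^k},2^{2^{k+1}}]$ this only bounds $\limsup_m\abs{\mathcal{O}_{m,k}}$ by a fixed, small but nonzero, constant depending on $k$. Indeed, in the proof of Theorem~\ref{thm:wlwhole}\ref{itm:wlw:levelset} only the part~$III$ of $\mathcal{O}_{m,k}$ shrinks to zero as $m\to\infty$; the part $I\cup II$ does not. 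So $\bfK\zeta\chi_{\mathcal{O}_{m,k}^\complement}$ need not converge strongly in $L^{p'}$, and pairing with $\nabla\bfu_m\rightharpoonup 0$ does not give $0$ but only a quantity whose limsup is controlled by the absolute continuity of $\int\abs{\bfK}^{p'}$ on small sets; for $\bfK\in L^{p'}$ only, this does not come with the rate $2^{-k/p}$. The paper circumvents this by never separating $\bfK$ from $\bfG_{1,m}$: the bad-set contribution $\int(\bfG_{1,m}+\bfK):\nabla\curl\zmk\,\zeta\,\chi_{\mathcal{O}_{m,k}}$ is estimated by H\"older using $\norm{\nabla\curl\zmk}_\infty\le c\,\lambda_{m,k}$ (Theorem~\ref{thm:wlwhole}\ref{itm:wlw:n2BMO}) and $\limsup_m\lambda_{m,k}^p\abs{\mathcal{O}_{m,k}}\le c\,2^{-k}$, which is exactly where the rate $2^{-k/p}$ comes from.

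\emph{The correction terms.} Restricting first to $\mathcal{O}_{m,k}^\complement$ and only then converting $\nabla\umk$ to $\zeta\nabla\bfu_m$ produces the corrections $\bfu_m\otimes\nabla\zeta$ and $\nabla(\nabla\zeta\times\bfw_m)$. The hypotheses only give strong convergence of $\bfu_m$ and $\nabla\bfw_m$ in $L^s(\Iz;L^\sigma(\Bz))$-type spaces, with $\sigma<p$, whereas $\bfG_{1,m}$ is merely bounded in $L^{p'}$; naive H\"older requires $L^p$-strong convergence of the corrections, which you do not establish (and ``Aubin--Lions-type'' does not apply since $\partial_t\bfu_m$ is only controlled through solenoidal duality). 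This can in principle be repaired by interpolating the $L^\sigma$-strong convergence with the $L^p(W^{1,p})$-boundedness (via Sobolev and $L^\infty(L^\sigma)$) to get $L^p$-strong convergence, but you do not carry this out. The paper instead factors $\nabla\umk=\zeta\nabla\curl\zmk+\bfa_{m,k}$ \emph{before} restricting to the good set. There the lower-order pieces of $\bfa_{m,k}$ involve the truncated $\zmk$ and $\nabla\zmk$, which go to $0$ in $L^\infty$ by Theorem~\ref{thm:wlwhole}\ref{itm:wlw:nweak}, and the remaining piece involves the harmonic $\bfw_m-\bfz_m$, which by interior estimates goes to $0$ in $L^s(\Iz;W^{2,s}(\tfrac16\Bz))$ for every $s$. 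So $\bfa_{m,k}\to 0$ in every $L^s$, and no interpolation with the $L^p(W^{1,p})$-bound is needed. Only at the last step --- replacing $\nabla\curl\bfz_m$ by $\nabla\curl\bfw_m=\nabla\bfu_m$ on the good set --- does the paper use the harmonic convergence again, paired with $\bfG_{1,m}\in L^{p'}$.

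In short: your identity $\umk=\curl(\zeta\bfw_m)$ on $\mathcal{O}_{m,k}^\complement$ and the overall skeleton are correct, but the decision to split $\bfK$ off and to remove the bad set before extracting the $\bfa_{m,k}$-type error both forfeit the $L^\infty$-control of the truncated quantities that makes the paper's estimates close quantitatively.
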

\begin{proof}
  It follows from~\ref{itm:wlu:n2weak},~\ref{itm:wlu:levelset}
  and~\ref{itm:wlu:time} of Theorem~\ref{thm:ulwhole}.
  \begin{align}
    \label{eq:tmp1}
    \limsup_{m\rightarrow\infty} \bigg|\int (\bfG_m+\bfK):\nabla\umk
    \dxt\bigg| &\leq c\, \lambda_{m,k}^p \abs{\mathcal{O}_{m,k}} \leq
    c\, 2^{-k}
  \end{align}
  Recall that $\umk=\curl(\zeta\zmk)+\curl(\zeta(\bfw_m-\bfz_m))$. We
  have $\bfz_{m,k}, \nabla \bfz_{m,k} \to 0$ in $L^\infty(\frac 14
  \Qz)$ by Theorem~\ref{thm:wlwhole} for $m \to \infty$ and $k$ fixed.
  Since $\bfu_m$ is a strong null sequence in~$L^\sigma(\Qz)$ and is
  bounded in $L^\infty(\Iz, L^\sigma(\Bz))$ we get $\bfu_m \to 0$
  strongly in $L^s(\Iz, L^\sigma(\Bz))$ for any $s \in (1,\infty)$. By
  continuity of the \Bogovskii{} operator we get the same convergence
  for~$\tilde{\bfu}_m$.  Now, Lemma~\ref{lem:defw} implies $\bfw_m =
  \curl^{-1} \tilde{\bfu}_m \to 0$ in $L^s(\Iz, W^{1, \sigma}(\Rdr))$.
  Using $\bfz_m :=\Delta\Delta_{\frac 12 \Qz}^{-2}\Delta\bfw_m$ and
  Corollary~\ref{cor:mueller} we also get $\bfz_m \to 0$ in
  $L^{s}(\Iz, W^{1, \sigma}(\Rdr))$.  Since $\bfz_m - \bfw_m$ is
  harmonic on $\frac14 \Qz$, we have $\bfz_m -\bfw_m \to 0$ in
  $L^{s}(\Iz, W^{2, s}(\frac 16 \Bz))$. These convergences imply that
  \begin{align*}
    \nabla \umk &= \zeta \nabla \curl \zmk + \bfa_{m,k},
  \end{align*}
  with $\bfa_{m,k} \to 0$ in $L^s(\frac 16 \Qz)$ for $m\to \infty$ and
  $k$ fixed. This, the boundedness of $\bfG_m$ in $L^\sigma(\frac 16
  \Qz)$, $\bfK \in L^{p'}(\frac 16 Q_0)$ and~\eqref{eq:tmp1} imply
  (using $s> \sigma')$
  \begin{align*}
    \limsup_{m\rightarrow\infty} \bigg|\int ((\bfG_m+\bfK):\nabla
    \curl \zmk) \zeta \dxt\bigg| \leq c\, 2^{-k}.
  \end{align*}
  Since $\bfG_m = \bfG_{1,m} + \bfG_{2,m}$, $\bfG_{2,m} \to 0$ in
  $L^\sigma(\frac 16 \Qz)$ and $\bfz_{m,k} \weakto 0$ in
  $L^{\sigma'}(\frac 16 Qz)$ for $m\to \infty$ and $k$ fixed, we get
  \begin{align}
    \label{eq:tmp4}
    \limsup_{m\rightarrow\infty} \bigg|\int ((\bfG_{1,m}+\bfK):\nabla \curl
    \zmk) \zeta \dxt\bigg| \leq c\, 2^{-k}.
  \end{align}
  The boundedness of $\bfG_{1,m}$ and $\bfK$ in $L^{p'}(\frac 16 \Qz)$
  and
  Theorem~\ref{thm:wlwhole}
and~\ref{itm:wlw:levelset}
  prove
  \begin{align*}
    \limsup_{m\rightarrow\infty} \bigg|\int ((\bfG_{1,m}+\bfK):\nabla \curl
    \zmk) \zeta \chi_{\mathcal{O}_{m,k}}\dxt\bigg| &\leq c\, 2^{-k/p}.
  \end{align*}
  This,~\eqref{eq:tmp4} and $\bfz_{m,k} = \bfz_m$ on
  $\mathcal{O}_{m,k}^\complement$ yield
  \begin{align*}
    \limsup_{m\rightarrow\infty} \bigg|\int ((\bfG_{1,m} + \bfK):\nabla \curl
    \bfz_m) \zeta \chi_{\mathcal{O}_{m,k}^\complement}\dxt\bigg| &\leq
    c\, 2^{-k/p}.
  \end{align*}
  Recall that $\bfz_m -\bfw_m \to 0$ in $L^{s}(\Iz, W^{2, s}(\frac 16
  \Bz))$ for any~$s \in (1,\infty)$. This and the boundedness of
  $\bfG_{1,m}$ in $L^{p'}(Q_0)$ allows us to exchange $\bfz_m$ in the
  previous integral by $\bfw_m$. Now $\curl \bfw_m = \bfu_m$ proves
  the claim.
\end{proof}



\begin{remark}[The higher dimensional case]
  For general dimensions, the solenoidal Lipschitz truncation is
  best understood in terms of differential forms.  We start with
  $\tilde{\bfu}$ as given in~\eqref{eq:uH2}. Now, we have to find
  $\bfw$ such that $\curl \bfw = \tilde{\bfu}$ and $\divergence
  \bfw=0$. Let us define the 1-form $\alpha$ on $\setR^n$ associated
  to the vector field $\tilde{\bfu}$ by $\alpha :=\sum_i\tilde{u}_i
  dx^i$. Then we need to find a 2-form $\omega$ such that $d^* \omega
  = \alpha$ and $d \omega = 0$, where $d$ is the outer derivative and
  $d^*$ its adjunct by the scalar product for $k$-forms. Similar to
  $\bfw = \curl^{-1} \tilde{\bfu} = \curl \Delta^{-1} \tilde{\bfu}$ we
  get~$\omega$ by $\omega := d \Delta^{-1} \alpha$. Since we are on
  the whole space, $\Delta^{-1}$ can be constructed by mollification
  with $c\, \abs{x}^{2-n}$. Thus, we have
  \begin{align*}
    \omega(x)= (d\Delta^{-1} \alpha)(x) =
    d\sum_i\bigg(\int_{\setR^n}\frac{\bfu_i(y)}{\abs{x-y}^{d-2}}dy\bigg)
    dx^i.
  \end{align*}

  Let us explain how to substitute the equation $\partial_t \Delta
  \bfw = - \curl \divergence \bfG$, see~\eqref{eq:wttmp}. Instead of
  test functions $\bfpsi$ with $\divergence \bfpsi =0$ we use the
  associated 1-forms $\beta = \sum_i \psi_i dx^i$ with $d^*
  \beta=0$. Thus there exists a 2-form $\gamma$ with $d^*
  \gamma=0$. Then
  \begin{align*}
    \skp{\partial_t \tilde{\bfu}}{\bfpsi} = \skp{\partial_t
      \alpha}{\beta} = \skp{\partial_t d^* \omega}{d^* \gamma} =
    \skp{\partial_t d d^* \omega}{\gamma} = \skp{-\partial_t \Delta
      \omega}{\gamma}, 
  \end{align*}
  where we used $-\Delta=dd^*+d^*d$ and $d\alpha=0$ in the last step.
  Note that $-\Delta$ applied to the form $\omega$ is the same as
  $-\Delta$ applied to the vector field of all components of~$\omega$.
  Now we define $\bfw$ as the associated vector field (with
  $\binom{n}{2}$ components) of $\omega$ and we arrive again at an
  equation for~$\partial_t \Delta\bfw$. This concludes the
  construction; the rest can be done exactly as for dimension
  three. The restriction $p>\frac 65$ in Section~\ref{sec:appl} will
  change to $\frac{2n}{n+2}$.

\end{remark}

\section{Application to generalized Newtonian fluids}
\label{sec:appl}

In this section we show how the solenoidal Lipschitz truncation can be
used to simplify the existence proof for weak solutions of the power
law fluids. We are able to work completely in the pressure free
formulation.
\begin{theorem}
  \label{thm:pfluid}
  Let $p > \frac 65$, $Q:=(0,T)\times\Omega$, $\bff\in L^{p'}(Q)$ and
  $\bfv_0\in L^2(\Omega)$. Then there is a
  solution $\bfv\in L^{\infty}(0,T; L^2(\Omega)) \cap
  L^p(0,T;W^{1,p}_{0,\divergence}(\Omega))$ to
  \begin{align}
    \begin{aligned}
      \int_{Q} \bfS(\ep(\bfv)):\ep(\bfvarphi)\dxt&=\int_{Q}
      \bff\cdot\bfvarphi\dxt+\int_{Q} \bfv\otimes
      \bfv:\ep(\bfvarphi)\dxt
      \\
      &\quad +\int_{Q} \bfv \,\partial_t\bfvarphi\dxt+\int_{\Omega}
      \bfv_0\,\bfvarphi(0)\dx
    \end{aligned}\label{eq99}
  \end{align}
  for all $\bfphi\in
  C^\infty_{0,\divergence}([0,T)\times\Omega)$.
\end{theorem}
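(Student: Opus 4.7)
The plan is to follow the standard existence framework for power law fluids, with the solenoidal Lipschitz truncation replacing the traditional Bogovskii-corrected truncation, which lets us remain entirely in the pressure-free formulation.

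First I would construct approximate solutions $\bfv_m \in L^{\infty}(0,T;L^2(\Omega))\cap L^p(0,T;W^{1,p}_{0,\divergence}(\Omega))$ via a Galerkin scheme based on a basis of solenoidal functions (so that no pressure appears in the approximate equations). Testing the Galerkin equation with $\bfv_m$ itself gives the natural energy estimate and yields uniform bounds for $\bfv_m$ in $L^\infty(L^2)\cap L^p(W^{1,p})$ and for $\bfS(\ep(\bfv_m))$ in $L^{p'}(Q)$. Since $p>\tfrac65$ we have $\bfv_m\otimes\bfv_m$ uniformly bounded in $L^\sigma(Q)$ for some $\sigma>1$, so from the equation we also get a uniform estimate on $\partial_t\bfv_m$ as a functional on solenoidal test functions. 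Up to a subsequence, $\bfv_m\rightharpoonup \bfv$ weakly in $L^p(W^{1,p})$, weakly-$*$ in $L^\infty(L^2)$, $\bfS(\ep(\bfv_m))\rightharpoonup \overline{\bfS}$ weakly in $L^{p'}(Q)$; an Aubin--Lions type argument (using the control on the solenoidal part of $\partial_t\bfv_m$) gives strong convergence $\bfv_m\to\bfv$ in $L^s(Q)$ for $s$ slightly below the Sobolev exponent, which is enough to identify $\bfv_m\otimes\bfv_m\to\bfv\otimes\bfv$ strongly in $L^\sigma(Q)$ when $p>\tfrac65$.

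The decisive step is to identify $\overline{\bfS}=\bfS(\ep(\bfv))$. Setting $\bfu_m:=\bfv_m-\bfv$ we have $\bfu_m\rightharpoonup 0$ in $L^p(W^{1,p}_{0,\divergence})$, $\bfu_m\to 0$ in $L^\sigma(Q)$ and $\bfu_m$ is bounded in $L^\infty(L^\sigma)$. Moreover, on solenoidal test functions,
\begin{align*}
\partial_t \bfu_m = -\Div\bigl(\bfS(\ep(\bfv_m))-\overline{\bfS}\bigr) -\Div(\bfv_m\otimes\bfv_m-\bfv\otimes\bfv) + \Div(\overline{\bfS}-\bfS(\ep(\bfv))),
\end{align*}
so we can write $\bfG_m=\bfG_{1,m}+\bfG_{2,m}$ with $\bfG_{1,m}=-(\bfS(\ep(\bfv_m))-\overline{\bfS})$ a weak null sequence in $L^{p'}(Q)$ and $\bfG_{2,m}=-(\bfv_m\otimes\bfv_m-\bfv\otimes\bfv)$ converging strongly to zero in $L^\sigma(Q)$. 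This is exactly the setting of Theorem~\ref{thm:ulwhole}, so we obtain a solenoidal Lipschitz truncation $\bfu_{m,k}$ together with the exceptional set $\mathcal{O}_{m,k}$.

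Now I would apply Corollary~\ref{cor:main} with the choice $\bfK=\overline{\bfS}-\bfS(\ep(\bfv))\in L^{p'}$, which cancels the constant-in-$m$ part of $\bfG_{1,m}$, to obtain
\begin{align*}
\limsup_{m\to\infty}\biggabs{\int_Q \bigl(\bfS(\ep(\bfv_m))-\bfS(\ep(\bfv))\bigr):\nabla\bfu_m\,\zeta\,\chi_{\mathcal{O}_{m,k}^\complement}\dxt}\leq c\,2^{-k/p}.
\end{align*}
By monotonicity of $\bfS$, the integrand on $\mathcal{O}_{m,k}^\complement$ is nonnegative, and by Theorem~\ref{thm:ulwhole}\ref{itm:wlu:levelset} the contribution from $\mathcal{O}_{m,k}$ is controlled uniformly by $c\lambda_{m,k}^p|\mathcal{O}_{m,k}|\le c\,2^{-k}$. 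Splitting $\nabla\bfu_m=\ep(\bfu_m)+\text{skew}$ and using $\bfS:\text{skew}=0$, a standard argument (raise the integrand to a small power $\theta<1$ so that it becomes equi-integrable) then gives a.e.\ convergence $\ep(\bfv_m)\to\ep(\bfv)$ on the support of $\zeta$, and by exhaustion on all of $Q$. Vitali's theorem upgrades this to $\bfS(\ep(\bfv_m))\to\bfS(\ep(\bfv))$ in $L^1$, hence $\overline{\bfS}=\bfS(\ep(\bfv))$, and passing to the limit in the Galerkin equation completes the proof. The main obstacle is the bookkeeping of the decomposition $\bfG_m=\bfG_{1,m}+\bfG_{2,m}$ and the verification that all of $\sigma<\min\{p,p'\}$, $\sigma<\tfrac{3(p-1)}{3-p}$ (so the convective term lies in $L^\sigma$) and the $L^\infty(L^\sigma)$ hypothesis of Corollary~\ref{cor:main} can be met simultaneously when $p>\tfrac65$.
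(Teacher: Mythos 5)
Your overall strategy matches the paper's: produce an approximating sequence $\bfv_m$, pass to weak limits, form $\bfu_m := \bfv_m - \bfv$ and apply Theorem~\ref{thm:ulwhole} / Corollary~\ref{cor:main} with $\bfK = \tilde{\bfS} - \bfS(\ep(\bfv))$, then close the argument with the $\theta$-trick, monotonicity and a.e.\ convergence. That final identification step is exactly what the paper does.

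However, there is a genuine gap in the way you set up the approximation. You propose to take $\bfv_m$ directly as Galerkin approximations in an increasing sequence of finite-dimensional solenoidal subspaces $V_m$. That makes the Galerkin equation for $\bfv_m$ valid only for test functions valued in $V_m$, not for all $\bfxi \in C^\infty_{0,\divergence}(\Qz)$. Consequently, the difference identity $\int \bfu_m\cdot\partial_t\bfvarphi = \int \bfG_m:\nabla\bfvarphi$ obtained by subtracting the limit equation from the Galerkin equation also holds only for $V_m$-valued test functions. This violates the hypothesis $\partial_t\bfu_m = -\Div\bfG_m$ in $\mathcal{D}'_{\divergence}(\Qz)$ of Theorem~\ref{thm:ulwhole}, which is quantified over \emph{all} compactly supported solenoidal test functions; the whole construction of $\bfw_m$, $\bfz_m$ and the parabolic maximal-function level sets hinges on testing with arbitrary $\bfpsi \in C^\infty_0(\tfrac12\Qz)$. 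For the same reason, the claimed uniform estimate on $\partial_t\bfv_m$ ``as a functional on solenoidal test functions'' is not available from the raw Galerkin equation without passing through the $V_m$-projection, whose norm is not uniformly controlled in general.

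The paper circumvents this by not applying the Lipschitz truncation to Galerkin approximations at all. Instead, it first introduces the regularized system with the monotone perturbation $\tfrac1m|\ep(\bfv)|^{q-2}\ep(\bfv)$, with $q>\max\{\tfrac{5p}{5p-6},p\}$ large enough that the solution space and test space coincide and the convective term becomes a compact perturbation. Then $\bfv_m$ is a genuine weak solution of the regularized system for \emph{all} $\bfphi\in C^\infty_{0,\divergence}([0,T)\times\Omega)$ (its existence is proved internally by Galerkin plus monotone operator theory, but one passes to that inner limit first). The regularization term $\tfrac1m|\ep(\bfv_m)|^{q-2}\ep(\bfv_m)\to 0$ in $L^{q'}$ and is simply absorbed into the strongly null part $\bfH_{2,m}$ of the decomposition, alongside the convective difference $\bfv_m\otimes\bfv_m - \bfv\otimes\bfv$ — a term you omitted. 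Once this is repaired, the rest of your argument (Aubin--Lions, the splitting $\bfG_{1,m}+\bfG_{2,m}$, Corollary~\ref{cor:main} with $\bfK=\tilde{\bfS}-\bfS(\ep(\bfv))$, the split over $\mathcal{O}_{m,k}$ and $\mathcal{O}_{m,k}^\complement$ with the power $\theta<1$, and the passage to a.e.\ convergence) is in line with the paper.
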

\begin{proof}
  We start with an approximated system whose solution is known to
  exist.  Let $\bfv_m\in L^q(I;W^{1,q}_{0,\divergence}(B))\cap
  L^{\infty}(I; L^2(B))$ be a solution to
  \begin{align}
    \label{appr1}
    \lefteqn{\int_{Q} \bfS(\ep(\bfv)):\ep(\bfvarphi)\dxt+
      \frac{1}{m}\int_{Q}|\ep(\bfv)|^{q-2} \ep(\bfv):\ep(\bfphi)\dxt}
    &\quad
    \\
    &= \int_{Q} \bff\cdot\bfvarphi\dxt +\int_{Q} \bfv\otimes
    \bfv:\ep(\bfvarphi)\dxt+\int_{Q} \bfv
    \,\partial_t\bfvarphi\dxt+\int_{\Omega} \bfv_0\,\bfvarphi(0)\dx
    \nonumber
  \end{align}
  for all $\bfphi\in
  C^\infty_{0,\divergence}([0,T)\times\Omega)$, where
  $q>\max\set{\frac{5p}{5p-6},p}$.

  Due to the choice of $q$ the space of test functions coincides with
  the space where the solution is constructed and the convective term
  becomes a compact perturbation. The existence of $\bfv_m$ is
  therefore standard and can be proved by monotone operator theory.
  Since we are allowed to test with $\bfv_m$, we find
  \begin{align}
    \quad \frac {1} {2}\|\bfv_m (t)\|^2_{L^2}\,+\, \int_0^t
    \int_{{\Omega} } \bfS&(\ep(\bfv_m )) : \ep (\bfv_m ) \dx
    \ds\nonumber+ \frac{1}{m}\int_0^t \int_{{\Omega}
    }|\ep(\bfv_m)|^q\dx\ds
    \\
    \,&=\, \frac {1} {2}\|\bfv_0\|^2_{L^2}\,+\, \int_0^t \int_\Omega
    {\bf f}: \bfv_m\, dx\ds. \hspace*{0.3cm}
    \label{4/EQ:E/APP}
  \end{align}
  for all $t\in (0,T)$.  By coercivity and Korn's inequality we get
  $$\int_Q  \bfS(\ep(\bfv_m )) : \ep (\bfv_m )\dxt\geq c\bigg(\int_Q
  |\nabla\bfv_m|^{p}\dxt-1\bigg)$$ thus
  \begin{align}\label{4/ES:A/Li(H):Lq(V)}
    \,\|m^{-1/q}\ep(\bfv_m) \|_{q,Q}+ \|\bfv_m \|^2_{L^\infty (0,T;
      L^2)} \,+\,\|\nabla\bfv_m \|_{p,Q} \,\leq\, c.
  \end{align}
  Hence we find a function $\bfv\in
  L^p(0,T;W^{1,p}_{0,\divergence}(\Omega))\cap L^\infty(0,T;
  L^2(\Omega)) $ such that for a subsequence
  \begin{align}
    \label{4/LIM/u_eps}
    \begin{alignedat}{2}
      \nabla\bfv_m & \rightharpoonup \nabla\bfv &&\quad\text{in }
      L^{p}(Q),
      \\
      \bfv_m & \weakastto \bfv&&\quad\text{in }
      L^\infty(0,T;L^2({\Omega})),
      \\
      \frac{1}{m}|\ep(\bfv_m)|^{q-2}\ep(\bfv_m) & \rightarrow0
      &&\quad\text{in } L^{q'}(Q),
    \end{alignedat}
  \end{align}
  Since $\bfS(\ep(\bfv_m))$ is bounded in $L^{p'}(Q)$ by
  (\ref{4/ES:A/Li(H):Lq(V)}), there exist $\tilde{\bfS}\in L^{p'}(Q)$
  with
  \begin{align}
    \label{4/LIM/S}
    \bfS(\ep(\bfv_m))& \rightharpoonup \tilde{\bfS} \quad\text{in }
    L^{p'}(Q).
  \end{align}
  Let us have a look at the time derivative. From equation
  (\ref{appr1}) we get the uniform boundedness of $\partial_t \bfv_m$
  in $L^{p}(0,T;(W^{3,2}_{0,\divergence}(\Omega))^*)$ and weak
  convergence of $\partial_t \bfv_m$ to $\partial_t \bfv$ in the same
  space (for a subsequence). This shows by using the compactness of
  the embedding $W^{1,p}_{0,\divergence}({\Omega})\hookrightarrow
  L^{2\sigma_2}_{\divergence}({\Omega})$ for some $\sigma_2>1$ (which
  follows from our assumption~$p> \frac 65$, resp. $p>
  \frac{2n}{n+2}$) and the Aubin-Lions theorem\cite{Lio69} that
  $\bfv_m\rightarrow \bfv$ in
  $L^\sigma(0,T,L^{2\sigma_2}_{\divergence}(\Omega))$. This and the
  boundedness in $L^\infty((0,T); L^2(\Omega))$ imply that for some
  $\sigma>1$
  \begin{align}
    \label{umstrong}
    \bfv_m & \rightarrow \bfv\quad\text{in }
    L^s(0,T;L^{2\sigma}(\Omega))\quad\text{for all }s<\infty.
  \end{align}
  As a consequence we have
  \begin{align}
    \label{umstrong2}
    \bfv_m\otimes \bfv_m & \rightarrow \bfv\otimes\bfv\quad\text{in }
    L^s(0,T;L^{\sigma}(\Omega))\quad\text{for all }s<\infty.
  \end{align}
  Overall, we get our limit equation
  \begin{align}
    \begin{aligned}
      \int_{Q} \tilde{\bfS}:\ep(\bfvarphi)\dxt&=\int_{Q}
      \bff\cdot\bfvarphi\dxt+\int_{Q} \bfv\otimes
      \bfv:\ep(\bfvarphi)\dxt
      \\
      &\quad +\int_{Q} \bfv \,\partial_t\bfvarphi\dxt+\int_{\Omega}
      \bfv_0\,\bfvarphi(0)\dx
    \end{aligned} \label{eq:limit-eq}
  \end{align}
  for all $\bfphi\in
  C^\infty_{0,\divergence}([0,T)\times\Omega)$.

  All of the forthcoming effort is to prove
  $\tilde{\bfS}=\bfS(\ep(\bfv))$ almost everywhere. We start with the
  difference of the equation of $\bfv_m$ and the limit equation.
  \begin{multline}
    -\int_{Q} (\bfv_m -\bfv) \cdot \partial _t \bfvarphi \dx \dt +
    \int_{Q} ({\bf S}(\ep(\bfv_m )) - \tilde{{\bf S}}): \nabla
    \bfvarphi\dx \dt
    \label{4/ID:I/APP:P}
    \\
    \,=\, \int_{Q} \big (\bfv_m \otimes \bfv_m - \bfv\otimes \bfv
    +m^{-1}|\ep(\bfv_m)|^{q-2}\ep(\bfv_m)\big ): \nabla \bfvarphi\dx \dt
  \end{multline}
  for all $\bfvarphi \in C^\infty_{0,\divergence}([0,T)\times \Omega)$.
  We define $ \bfu_m:= \bfv_m - \bfv$. Then by \eqref{umstrong}
  \begin{align}
    \label{4/LIM:s/v_m}
    \begin{aligned}
      &\bfu_m \rightharpoonup \bfzero \quad \quad \text{in }
      L^p(0,T;W^{1,p}_{0,\divergence}(\Omega)),
      \\
      &\bfu_m \rightarrow \bfzero \quad \quad \text{in } L^{2\sigma}(Q).
      \\
      &\bfu_m \weakastto \bfzero \quad \quad \text{in } L^\infty(0,T;
      L^2(\Omega)).
    \end{aligned}
  \end{align}
  Thus, we can write (\ref{4/ID:I/APP:P}) as
  \begin{align}
    \label{eq:limit}
    \int\limits_{(0,T)\times\Omega} \bfu_{m}
    \cdot\partial_t\bfvarphi\dx\, \dt \,=\,
    \int\limits_{(0,T)\times\Omega} \bfH_{ m} : \nabla \bfvarphi \dxt
  \end{align}
  for all $\bfvarphi \in C^\infty_{0,\divergence}(Q)$, where
  $\bfH_m:={\bfH}_{1, m}+\bfH_{2, m}$ with
  \begin{align*}
    {\bfH}_{1, m}&:= {\bf S}( \ep(\bfv_m))-{\bf \tilde{S}},
    \\
    \bfH_{2, m}&:=\bfv_m\otimes \bfv_m - \bfv\otimes \bfv
    +m^{-1}|\ep(\bfv_m)|^{q-2}\ep(\bfv_m).
  \end{align*}
  Moreover,
  \eqref{4/LIM/u_eps} and (\ref{umstrong}) imply
  \begin{align}
    \label{H1tilde}
    \|{\bfH}_{1, m}\|_{p'}\leq c
  \end{align}
  as well as
  \begin{align}
    \label{H2tilde}
    \bfH_{2, m}\rightarrow0\quad\text{in } L^{\sigma}(Q).
  \end{align}
  Now take any cylinder $Q_0\compactsubset(0,T)\times\Omega$.
  Now,~\eqref{4/LIM:s/v_m}, \eqref{eq:limit}, \eqref{H1tilde}
  and~\eqref{H2tilde} ensure that we can apply
  Corollary~\ref{cor:main}. In particular, for suitable $\zeta\in
  C^\infty_0(\frac 16 \Qz)$ with $\chi_{\frac 18 \Qz}\leq\zeta\leq
  \chi_{\frac 16 \Qz}$ Corollary~\ref{cor:main} implies
  \begin{align*}
    \limsup_{m\rightarrow\infty} \bigg|\int \big((\bfH_{1,m}+ {\bf
      \tilde{S}} - \bfS(\ep(\bfv))):\nabla (\bfv_m - \bfv) \big) \zeta
    \chi_{\mathcal{O}_{m,k}^\complement} \dxt\bigg| \leq c\, 2^{-k/p}.
  \end{align*}
  In other words
  \begin{align*}
    \limsup_{m\rightarrow\infty} \bigg|\int
    \Big(\big(\bfS(\ep(\bfv_m))-\bfS(\ep(\bfv))\big):\nabla(\bfv_m - \bfv)
    \Big) \zeta \chi_{\mathcal{O}_{m,k}^\complement} \dxt\bigg| \leq
    c\, 2^{-k/p}.
  \end{align*}
  Let $\theta \in (0,1)$. Then by H{\"o}lder's inequality and
  Theorem~\ref{thm:ulwhole}
  and~\ref{itm:wlu:levelset}
  \begin{align*}
    \lefteqn{\limsup_{m\rightarrow\infty} \int
      \Big(\big(\bfS(\ep(\bfv_m))-\bfS(\ep(\bfv))\big):\nabla(\bfv_m -
      \bfv)\Big)^\theta \zeta \chi_{\mathcal{O}_{m,k}}\dxt} \qquad&
    \\
    &\leq c\, \limsup_{m \to \infty} |\mathcal{O}_{m,k}|^{1-\theta} \leq
    c\,2^{-(1-\theta)\frac kp}. \hspace{30mm}
  \end{align*}
  This, the previous estimate and H{\"o}lder's inequality give
  \begin{align*}
    \limsup_{m\rightarrow\infty} \int
    \Big(\big(\bfS(\ep(\bfv_m))-\bfS(\ep(\bfv))\big):\nabla(\bfv_m -
    \bfv)\Big)^\theta \zeta \dxt &\leq c\,2^{-(1-\theta)\frac kp}.
  \end{align*}
  For $k\to \infty$ the right hand side converges to zero.  Now, the
  monotonicity of $\bfS$ implies that
  $\bfS(\ep(\bfv_m))\rightarrow\bfS(\ep(\bfv))$ a.e. on $\frac 18
  \Qz$. This concludes the proof of Theorem~\ref{thm:pfluid}.

\end{proof}

\section{Solenoidal truncation -- stationary case}
\label{sec:sol_stat}

In this section we show how the solenoidal Lipschitz truncation based
on the $\curl$-representation works in the stationary case. This
provides a simplified approach to the solenoidal Lipschitz truncation
of Ref.~\cite{BreDieFuc12}, which was based on local \Bogovskii{}
projections.

Let us start with a ball $B \subset \Rdr$ and $\bfu \in
W^{1,s}_{0,\divergence}(B)$ with $s \in (1, \infty)$. Since $\bfu$ is
solenoidal, we can define $\bfw := \curl^{-1}\bfu$.  According
to~\eqref{eq:curlmdivfree}, \eqref{eq:curl1} and~\eqref{eq:curl2} we
have $\bfw \in W^{2,s}(\Rdr)$ with $\norm{\nabla \bfw}_s \leq
\norm{\bfu}_s$ and $\norm{\nabla^2 \bfw}_s \leq \norm{\nabla^2
  \bfu}_s$ and $\divergence \bfw = 0$. Since $\curl \curl \bfw = -
\Delta \bfw + \nabla \divergence \bfw = - \Delta \bfw$ and $\curl
\bfw= \bfu=0$ on $\Rdr \setminus B$, it follows that $\bfw$ is
harmonic on $\Rdr \setminus B$.

For $\lambda>0$ define $\mathcal{O}_\lambda := \set{M(\nabla^2
  \bfw)>\lambda}$, where $M$ is the standard non-centered maximal
operator, i.e. $Mf(x) := \sup_{B' \ni x} \dashint_{B'} \abs{f} \dy$
(the supremum is taken over all balls~$B' \subset \Rdr$
containing~$x$). As in Section~\ref{sec:soltrunc} there exists a
Whitney covering $\set{Q_i}$ (of balls) of~$\mathcal{O}_\lambda$ with:
\begin{enumerate}[label={\rm (W\arabic{*})}, leftmargin=*]
\item\label{itm:Swhit1} $\bigcup_{i \in \setN}\frac {1} {2} Q_i \,=\,
  \mathcal{O}_\lambda$,
\item\label{itm:Swhit2} for all $i\in \setN$ we have $8
  Q_i \subset \mathcal{O}_\lambda$ and $16 Q_i \cap (\setR^{3+1}\setminus
  \mathcal{O}_\lambda)\neq \emptyset$, 
\item \label{itm:Swhit3} if $ Q_i \cap Q_{j} \neq \emptyset
  $ then $ \frac 12 r_j\le r_i< 2\, {r_j}$ and $\abs{Q_i \cap Q_j}
  \geq c\, \max \set{\abs{Q_j}, \abs{Q_k}}$.
\item \label{itm:Swhit4}  at every point at most
  $120^{3+2}$ of the sets $4Q_i$ intersect,
\end{enumerate}
where $r_i$ is the radius of $Q_i$.

For each $Q_i$ we define $A_i:= \set{ j \,:\, Q_j\cap
  Q_i\neq\emptyset}$. Note that $\# A_i\le 120 ^{3+2}$ and $r_j \sim
r_i$ for all $j \in A_i$. With respect to the covering $\set{Q_i}$
there exists a partition of unity $\set{\phi_i} \subset
C^\infty_0(\Rdr)$ such that
\begin{enumerate}[label={\rm (P\arabic{*})}, leftmargin=*]
\item \label{itm:PP1} $\chi_{\frac{1}{2}Q_i}\leq \phi_i\leq
  \chi_{\frac{2}{3}Q_i}$,
\item \label{itm:PP2} $\sum_j\phi_j=\sum_{j\in A_i}\phi_j=1$ on $Q_i$,
\item \label{itm:PP3} $\abs{\phi_i} + r_i \abs{\nabla \phi_i} + r_i^2
  \abs{\nabla^2 \phi_i} \leq c$.
\end{enumerate}
Then the solenoidal Lipschitz truncation of $\bfu$ is pointwise
defined as
\begin{align}
  \label{eq:Slip1}
  \bfu_\lambda &:=
  \begin{cases}
    \sum_{j\in \N} \curl(\phi_j \bfw_j) &\qquad \text{in
      $\mathcal{O}_\lambda$},
    \\
    \bfu &\qquad \text{elsewhere},
  \end{cases}
\end{align}
with $\bfw_j := \Pi^1_{Q_j} \bfw$, where $\Pi^1_{Q_j}$ denotes the
first order averaged Taylor polynomial\cite{BreSco94,DieR07} on $Q_j$.
We begin with some estimates for~$\bfw$.


\begin{lemma}
  \label{lem:SPoin}
  For all $j \in \setN$ and all $k \in \setN$ with $Q_j \cap Q_k \neq \emptyset$ we have
  \begin{enumerate}[label={\rm (\alph{*})}]
  \item 
    \label{itm:SPoin1}
    $\dashint_{Q_j}\abs{\frac{\bfw-\bfw_j}{r_j^2} } \dx +
    \dashint_{Q_j}\abs{\frac{\nabla(\bfw-\bfw_j)}{r_j} } \dx \leq
    c\, \dashint_{Q_j} \abs{\nabla^2 \bfw}\dx$.
  \item 
    \label{itm:SPoin1b}
      $\dashint_{Q_j} \abs{\nabla^2 \bfw} \,\dx \leq c\,\lambda$.
  \item 
    \label{itm:SPoin2} 
    $\norm{\bfw_j - \bfw_k}_{L^\infty(Q_j)} \leq c
    \dashint_{Q_j}\abs{\bfw-\bfw_j}\dx + c
    \dashint_{Q_k}\abs{\bfw-\bfw_k}\dx$.
  \item \label{itm:SPoin3} $\norm{\bfw_j - \bfw_k}_{L^\infty(Q_j)}
    \leq c\, r_j^2\, \lambda$.
  \end{enumerate}
\end{lemma}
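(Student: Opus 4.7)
The four assertions are the standard toolkit accompanying a Whitney/Lipschitz-truncation construction, so my plan is to prove them in the stated order, each time combining a property of the averaged Taylor polynomial $\Pi^1_{Q_j}$ with the Whitney geometry \ref{itm:Swhit1}--\ref{itm:Swhit4}. The only slightly delicate point will be part (c), where I have to be careful that the norm-equivalence constants on the space of affine polynomials are uniform in the Whitney index.

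For part (a) I would just quote Lemma~3.1 of Ref.~\cite{DieR07}: since $\Pi^1_{Q_j}$ reproduces affine maps on $Q_j$, the function $\bfw-\bfw_j$ vanishes to second order in an averaged sense, and the scaled Poincar\'e estimates with $r_j^2$ and $r_j$ on the left-hand side follow at once. This mirrors the parabolic analogue in Lemma~\ref{lem:SP}, without the time-oscillation correction.

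For part (b) the key is property~\ref{itm:Swhit2}: there exists $x_0\in 16Q_j$ with $x_0\notin\mathcal{O}_\lambda$, i.e.\ $M(\nabla^2\bfw)(x_0)\leq\lambda$. Since $16Q_j$ is a ball containing $x_0$, the definition of the non-centered maximal operator gives $\dashint_{16Q_j}|\nabla^2\bfw|\dx\leq\lambda$, and enlarging the integration domain from $Q_j$ to $16Q_j$ costs only the constant $16^3$.

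Part (c) uses that $\bfw_j-\bfw_k$ is an affine polynomial, together with the geometric overlap estimate $|Q_j\cap Q_k|\geq c\max\{|Q_j|,|Q_k|\}$ from~\ref{itm:Swhit3}. On the finite-dimensional space of affine maps all averaged $L^1$-norms over subsets of a ball with comparable measure are equivalent to the sup norm, so
\begin{equation*}
\|\bfw_j-\bfw_k\|_{L^\infty(Q_j)}\leq c\dashint_{Q_j\cap Q_k}|\bfw_j-\bfw_k|\dx\leq c\dashint_{Q_j\cap Q_k}|\bfw-\bfw_j|\dx+c\dashint_{Q_j\cap Q_k}|\bfw-\bfw_k|\dx,
\end{equation*}
and replacing the domain of integration on the right by $Q_j$ respectively $Q_k$ costs only the constant $|Q_j|/|Q_j\cap Q_k|\leq c$. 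The main thing to check is that the constant in the first inequality is independent of $j$; this is immediate by scaling, since every $Q_j$ is a ball and $Q_j\cap Q_k$ always contains a ball of radius $\sim r_j$.

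Finally, (d) is just a concatenation of (a)--(c). Property~\ref{itm:Swhit3} gives $r_k\sim r_j$, then (c) yields $\|\bfw_j-\bfw_k\|_{L^\infty(Q_j)}\leq c\dashint_{Q_j}|\bfw-\bfw_j|\dx+c\dashint_{Q_k}|\bfw-\bfw_k|\dx$, part (a) turns each of these into $cr_j^2\dashint_{Q_j}|\nabla^2\bfw|\dx$ (and the analogous expression on $Q_k$), and part (b) bounds the remaining averages by $\lambda$. No step is expected to cause serious trouble; the proof is essentially the stationary simplification of the parabolic Lemma~\ref{lem:SP2} already carried out earlier.
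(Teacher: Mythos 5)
Your proposal is correct and follows essentially the same route as the paper: part (a) by Lemma~3.1 of Ref.~\cite{DieR07}, part (b) from property~\ref{itm:Swhit2} and the definition of the non-centered maximal operator, part (c) from the overlap estimate in~\ref{itm:Swhit3} plus norm equivalence on affine polynomials, and part (d) by concatenating (a)--(c). You merely spell out a few details the paper leaves implicit (the factor $16^3$, uniformity of the norm-equivalence constant), so there is nothing to object to.
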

\begin{proof} 
  The first part~\ref{itm:SPoin1} is just a consequence of the
  classical~\Poincare{} estimate and the properties of~$\Pi^1_{Q_j}$,
  see Lemma~3.1 of Ref.~\cite{DieR07}. The second part~\ref{itm:SPoin1b}
  follows from $Q_j \subset 16 Q_j$ and $16 Q_j \cap
  \mathcal{O}_\lambda^\complement \not=\emptyset$, so $\dashint_{16
    Q_j} \abs{\nabla^2 \bfw} \dx \leq \lambda$.

  Part~\ref{itm:SPoin2} follows from the
  geometric property of the~$Q_j$. If $Q_j \cap Q_k
  \neq\emptyset$, then $\abs{Q_j \cap Q_k} \geq c\, \max
  \{\abs{Q_j}, \abs{Q_k}\}$. This and the norm equivalence for linear
  polynomials imply
  \begin{align*}
    \norm{\bfw_j - \bfw_k}_{L^\infty(Q_j)} &\leq c\, \dashint_{Q_j
      \cap Q_k} \abs{\bfw_j - \bfw_k} \dx
    \\
    &\leq c \dashint_{Q_j}\left|\bfw_j-\bfw\right|\,\dx + c
    \dashint_{Q_k}\left|\bfw-\bfw_k\right|\,\dx.
  \end{align*}
  Finally,~\ref{itm:SPoin3} is a consequence of~\ref{itm:SPoin2},
  \ref{itm:SPoin1} and \ref{itm:SPoin1b}.
\end{proof}
\begin{lemma}
  \label{lem:Slocal}
  There exists $c_0>0$ such that $\lambda \geq \lambda_0 := c_0
  (\dashint_B \abs{\nabla \bfu}^s \dx)^{\frac 1s}$ implies
  $\mathcal{O}_\lambda \subset 2B$.
\end{lemma}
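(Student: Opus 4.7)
The plan is to show directly that for every $x$ outside $2B$ the non-centered maximal function $M(\nabla^2 \bfw)(x)$ is bounded by a constant multiple of $(\dashint_B |\nabla\bfu|^s\,\dx)^{1/s}$; then choosing $c_0$ to be this constant forces $\mathcal{O}_\lambda\cap(\Rdr\setminus 2B)=\emptyset$ for all $\lambda\geq\lambda_0$. Two global facts about $\bfw=\curl^{-1}\bfu$ drive the estimate: the $L^s$-bound $\|\nabla^2\bfw\|_s\leq c_s\|\nabla\bfu\|_s$ from \eqref{eq:curl2}, and the harmonicity of $\bfw$ (hence of $\nabla^2\bfw$ componentwise) on $\Rdr\setminus B$, which follows from $-\Delta\bfw=\curl\curl\bfw-\nabla\divergence\bfw=\curl\bfu=0$ outside $B$, as already observed in the text preceding the lemma.

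Fix $x\notin 2B$ and any ball $B'=B(y,r)$ with $x\in B'$. The geometric input is that $|x-c_B|\geq 2r_B$, so $|y-c_B|\geq 2r_B-r$. I would split into two cases according to the size of $r$ compared to $r_B$. If $r\geq r_B/4$, H\"older's inequality gives the crude bound
$\dashint_{B'}|\nabla^2\bfw|\,\dx\leq |B'|^{-1/s}\|\nabla^2\bfw\|_s\leq c\,r_B^{-3/s}\|\nabla\bfu\|_s$.
If instead $r<r_B/4$, then the enlarged ball $B(y,r_B/2)$ is disjoint from $B$ (since every point in it lies at distance strictly greater than $r_B$ from $c_B$), and so $\nabla^2\bfw$ is harmonic on $B(y,r_B/2)$. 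Standard interior estimates for harmonic functions then give $\sup_{B(y,r_B/4)}|\nabla^2\bfw|\leq c\,r_B^{-3/s}\|\nabla^2\bfw\|_{L^s(B(y,r_B/2))}\leq c\,r_B^{-3/s}\|\nabla\bfu\|_s$, and since $B'\subset B(y,r_B/4)$ this controls $\dashint_{B'}|\nabla^2\bfw|\,\dx$ by the same expression.

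Putting the two cases together, for every $B'\ni x$ we have $\dashint_{B'}|\nabla^2\bfw|\,\dx\leq c\,r_B^{-3/s}\|\nabla\bfu\|_s=c\,(\dashint_B|\nabla\bfu|^s\,\dx)^{1/s}$. Taking the supremum over $B'$ yields $M(\nabla^2\bfw)(x)\leq c_0\,(\dashint_B|\nabla\bfu|^s\,\dx)^{1/s}$ for every $x\notin 2B$, which is precisely the claim once $\lambda_0$ is set to this bound. The main subtlety is the small-ball case: a direct application of the global $L^s$-bound is too weak there, and the harmonicity of $\nabla^2\bfw$ near $x$ together with interior regularity must be exploited; the geometric hypothesis $x\notin 2B$ is calibrated precisely to leave room of size $\sim r_B$ around $x$ in which $\bfw$ is harmonic, which is what makes this step work.
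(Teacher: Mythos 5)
Your proof is correct and follows essentially the same approach as the paper's: bound the non-centered maximal function of $\nabla^2\bfw$ outside $2B$ using H\"older plus the global $L^s$-bound for large trial balls, and harmonicity of $\bfw$ off $B$ together with interior estimates for small trial balls. The only cosmetic difference is the case split — you split on the radius $r$ versus $r_B/4$, whereas the paper splits on whether $B'$ meets $B$ — but the two are interchangeable, and your version arguably makes the needed margin for the interior harmonic estimate slightly more explicit.
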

\begin{proof}
  Let $x \in \Rdr \setminus 2B$. We have to show that~$x \not\in
  \mathcal{O}_\lambda$.  We will show that $\dashint_{B'}
  \abs{\nabla^2 \bfw}\dx \leq c\, (\dashint_B \abs{\nabla \bfu}^s
  \dx)^{\frac 1s}$ for any ball~$B'$ containing~$x$.

  First, assume that $B' \cap B \neq \emptyset$. Then $\abs{B'} \geq c\,
  \abs{B}$ and
  \begin{align*}
    \dashint_{B'} \abs{\nabla^2 \bfw} \dx &\leq c\,\bigg(
    \frac{\abs{B}}{\abs{B'}} \bigg)^{\frac 1s} \bigg(\dashint_{B}
    \abs{\nabla \bfu}^s \dx\bigg)^{\frac 1s} \leq c\,
    \bigg(\dashint_{B} \abs{\nabla \bfu}^s \dx\bigg)^{\frac 1s},
  \end{align*}
  where we used H{\"o}lder's inequality and $\norm{\nabla^2 \bfw}_s \leq
  c\, \norm{\nabla \bfu}_s$ by~\eqref{eq:curl2}.

  Second, assume that $B' \cap B = \emptyset$. Let $B''$ denote the
  largest ball with the same center as~$B'$ such that $B'' \cap B =
  \emptyset$. Then $\abs{B''} \geq c\, \abs{B}$. Since $\bfw$ is
  harmonic on~$\Rdr \setminus B$, it follows by the interior estimates
  for harmonic functions, $\norm{\nabla^2 \bfw}_s \leq
  c\, \norm{\nabla \bfu}_s$ and $\abs{B''} \geq c\, \abs{B}$ that
  \begin{align*}
    \dashint_{B'} \abs{\nabla^2 \bfw}\dx &\leq c\,
    \bigg(\dashint_{B''} \abs{\nabla^2 \bfw}^s\dx\bigg)^{\frac 1s}
    \leq c\, \bigg(\dashint_{B} \abs{\nabla \bfu}^s\dx\bigg)^{\frac
      1s}.
  \end{align*}
  This proves the claim.
\end{proof}

We can conclude now, that $\bfu_\lambda$ is a global Sobolev
function. 
\begin{lemma}
  \label{lem:Srepr}
  We have for $\lambda \geq \lambda_0$
  \begin{align*}
    \bfu_\lambda-\bfu =  \sum_{j \in \setN}
    \curl(\phi_j (\bfw_j -\bfw)) \in W^{1,1}_0(2B),
  \end{align*}
  where the sum converges in $W^{1,1}_0(2B)$. In particular,
  $\bfu_\lambda \in W^{1,1}_0(2B)$.
\end{lemma}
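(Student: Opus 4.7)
The plan is first to establish the identity pointwise, then upgrade it to an absolutely convergent series in $W^{1,1}_0(2B)$. For the pointwise identity, I would observe that by Lemma~\ref{lem:Slocal} the set $\mathcal{O}_\lambda$ is contained in $2B$, and each $\phi_j$ is supported in $\frac{2}{3}Q_j \subset \mathcal{O}_\lambda$ by \ref{itm:PP1} and \ref{itm:Swhit2}. On $\mathcal{O}_\lambda^\complement \cap 2B$ the right-hand side therefore vanishes term by term, and by definition $\bfu_\lambda = \bfu$. On $\mathcal{O}_\lambda$ the partition property \ref{itm:PP2} gives $\bfu = \curl \bfw = \sum_j \curl(\phi_j \bfw)$ as a locally finite sum, so
\[
\bfu_\lambda - \bfu = \sum_{j \in \setN}\curl(\phi_j\bfw_j) - \sum_{j \in \setN}\curl(\phi_j\bfw) = \sum_{j \in \setN}\curl(\phi_j(\bfw_j-\bfw)),
\]
and the identity holds pointwise on $2B$.

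Next I would verify that each term lies in $W^{1,1}_0(2B)$ and estimate its norm. Since $\bfw_j$ is a first-order polynomial, $\nabla^2\bfw_j \equiv 0$, and a direct computation using the product rule for $\curl$ yields
\[
|\curl(\phi_j(\bfw_j-\bfw))| + |\nabla\curl(\phi_j(\bfw_j-\bfw))| \leq c\biggl(\frac{|\bfw-\bfw_j|}{r_j^2} + \frac{|\nabla(\bfw-\bfw_j)|}{r_j} + \phi_j|\nabla^2\bfw|\biggr),
\]
where the bounds on $|\phi_j|,|\nabla\phi_j|,|\nabla^2\phi_j|$ from \ref{itm:PP3} and the boundedness of $r_j$ (since $Q_j \subset 2B$) have been used. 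Integrating over $Q_j$ and applying the \Poincare{} estimate Lemma~\ref{lem:SPoin}\ref{itm:SPoin1} gives
\[
\|\curl(\phi_j(\bfw_j-\bfw))\|_{W^{1,1}(2B)} \leq c\int_{Q_j}|\nabla^2\bfw|\dx.
\]
Each summand is smooth and compactly supported in $\frac{2}{3}Q_j \subset \mathcal{O}_\lambda \subset 2B$, hence belongs to $W^{1,1}_0(2B)$.

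For convergence I would use the finite overlap property \ref{itm:Swhit4} together with $\bfw \in W^{2,s}(\Rdr)$ (from $\bfu \in W^{1,s}_{0,\divergence}(B)$ and \eqref{eq:curl2}), and H{\"o}lder's inequality, to bound
\[
\sum_{j \in \setN}\|\curl(\phi_j(\bfw_j-\bfw))\|_{W^{1,1}(2B)} \leq c\int_{\mathcal{O}_\lambda}|\nabla^2\bfw|\dx \leq c\,\|\nabla^2\bfw\|_s\,|\mathcal{O}_\lambda|^{1-1/s},
\]
which is finite since $|\mathcal{O}_\lambda|<\infty$ by the weak-type estimate for $M$. Absolute convergence in $W^{1,1}(2B)$ means the partial sums are Cauchy in $W^{1,1}_0(2B)$, a closed subspace, so the series converges there and its limit coincides with the pointwise value $\bfu_\lambda-\bfu$. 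Consequently $\bfu_\lambda \in \bfu + W^{1,1}_0(2B) \subset W^{1,1}_0(2B)$ since $\bfu \in W^{1,s}_0(B) \subset W^{1,1}_0(2B)$.

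The only subtle step is the bookkeeping of the derivative of $\curl(\phi_j(\bfw_j-\bfw))$: one must notice that $\nabla^2\bfw_j = 0$ so that the worst term $\phi_j\nabla^2\bfw$ can be controlled by $|\nabla^2\bfw|$ alone (avoiding a bad $\nabla^2\bfw_j$ contribution), and that the derivatives of $\phi_j$ pair exactly with the lower-order \Poincare{} quantities weighted by $r_j^{-1}$ and $r_j^{-2}$. Once this is observed, the rest is essentially finite-overlap summation, and no further ideas are required.
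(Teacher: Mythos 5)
Your proposal is correct and follows essentially the same route as the paper's proof: establish the pointwise identity via the partition of unity and the localization $\mathcal{O}_\lambda \subset 2B$ from Lemma~\ref{lem:Slocal}, bound each summand's $W^{1,1}$-norm by $\int_{Q_j}\abs{\nabla^2\bfw}\dx$ using Lemma~\ref{lem:SPoin}~\ref{itm:SPoin1} and the fact that $\nabla^2\bfw_j=0$, and conclude via the finite overlap property and $\nabla^2\bfw\in L^s\subset L^1(2B)$. The paper compresses these steps by citing its references, but the content is the same.
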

\begin{proof}
  The proof is similar to the one of
  Refs.~\cite{BreDieFuc12,DieKreSul13}. Note that the convergence
  will be unconditionally, i.e. irrespectively of the order of
  summation. Obviously, the convergence holds pointwise.  Since
  $\lambda \geq \lambda_0$, it follows by Lemma~\ref{lem:Slocal} that
  $Q_j \subset \mathcal{O}_\lambda \subset 2B$. In particular, each
  summand is in $W^{1,1}_0(2B)$.  It remains to prove convergence of
  the sum in $W^{1,1}_0(2B)$ in the gradient norm. We will show
  absolute convergence of the gradients in~$L^1$. The estimates
  for~$\phi_i$ and Lemma~\ref{lem:SPoin}~\ref{itm:SPoin1} imply
  \begin{align*}
    \sum_j \int \abs{\nabla \curl(\phi_j (\bfw_j -\bfw))} \dx & \leq
    c\,\sum_j \int_{Q_j} \abs{\nabla^2 \bfw}\dx \leq c\,
    \norm{\nabla^2 \bfw}_{L^1(2B)}.
  \end{align*}
  Now $\nabla^2 \bfw \in L^s(\Rdr)$ proves the claim.
\end{proof}
The following theorem describes the basic properties of the Lipschitz
truncation. It is a combination of the techniques
of Refs.~\cite{DieMS08,BreDieFuc12,DieKreSul13}.
\begin{theorem}
  \label{thm:Sest}
  If $\bfu \in W^{1,s}_{0,\divergence}(B)$ and $\lambda \geq
  \lambda_0$, then $\bfu_\lambda \in W^{1,\infty}_{0,\divergence}(2B)$
  and
  \begin{enumerate}[leftmargin=1cm,itemsep=1ex,label={\rm (\alph{*})}]
  \item \label{itm:Sest1} $\bfu_\lambda= \bfu$ on $\R^3 \setminus
    \mathcal{O}_\lambda = \R^3 \setminus \{ M(\nabla^2
    \bfw)>\lambda\}$.
  \item \label{itm:Sest2} $\norm{\bfu_\lambda}_q \leq
    c\,\norm{\bfu}_q$ for $1< q < \infty$ provided $\bfu
    \in L^q(B)$.
  \item \label{itm:Sest3} $\norm{\nabla \bfu_\lambda}_q
    \leq c\, \norm{\nabla \bfu}_q$ for $1< q <\infty$
    provided $\bfu \in W^{1,q}_0(B)$.
  \item \label{itm:Sest4} $|\nabla \bfu_\lambda| \leq c\, \lambda
    \chi_{\mathcal{O}_\lambda} + \abs{\nabla \bfu} \chi_{\Rdr
      \setminus \mathcal{O}_\lambda} \leq c\, \lambda$ almost
    everywhere for all $\lambda>0$.
  \end{enumerate}
\end{theorem}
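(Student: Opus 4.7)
The plan is to exploit $\bfu = \curl \bfw$ in two directions: the \emph{pointwise} inequality $|\nabla \bfu| = |\nabla \curl \bfw| \leq c\abs{\nabla^2 \bfw}$ will transfer the level-set control of $\nabla^2 \bfw$ on $\Rdr \setminus \mathcal{O}_\lambda$ directly into a Lipschitz bound for $\bfu_\lambda$ there, while on the bad set $\mathcal{O}_\lambda$ the affine approximants $\bfw_j = \Pi^1_{Q_j}\bfw$ together with the partition of unity $\set{\phi_j}$ reduce everything to the oscillation estimates of Lemma~\ref{lem:SPoin}. Part~\ref{itm:Sest1} is immediate from~\eqref{eq:Slip1}; the inclusion $\bfu_\lambda \in W^{1,1}_0(2B)$ is provided by Lemma~\ref{lem:Srepr}; and $\divergence \bfu_\lambda = 0$ a.e.\ because $\bfu_\lambda$ equals the solenoidal $\bfu$ outside $\mathcal{O}_\lambda$ and is a sum of curls inside. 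Upgrading to $W^{1,\infty}_{0,\divergence}(2B)$ will then be automatic once~\ref{itm:Sest4} is proved.

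The centerpiece is~\ref{itm:Sest4}. I would split into two cases. First, if $x \in Q_i \subset \mathcal{O}_\lambda$, I use~\ref{itm:PP2} on $Q_i$ to rewrite
\[
  \bfu_\lambda = \curl\bfw_i + \sum_{j \in A_i} \curl\bigl(\phi_j(\bfw_j - \bfw_i)\bigr)
  \qquad \text{on } Q_i.
\]
Since $\bfw_i$ is affine, $\curl\bfw_i$ is constant and drops out of $\nabla\bfu_\lambda$; since $\bfw_j - \bfw_i$ is affine, Leibniz leaves only the two terms $\abs{\nabla^2\phi_j}\,|\bfw_j - \bfw_i|$ and $\abs{\nabla\phi_j}\,|\nabla(\bfw_j - \bfw_i)|$. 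The first is $cr_j^{-2}\cdot r_j^{2}\lambda$ by~\ref{itm:PP3} and Lemma~\ref{lem:SPoin}\ref{itm:SPoin3}; for the second I use norm equivalence on the space of affine polynomials on $Q_j$ to deduce $\abs{\nabla(\bfw_j-\bfw_i)} \leq c r_j^{-1}\norm{\bfw_j-\bfw_i}_{L^\infty(Q_j)} \leq c r_j \lambda$, again giving $c\lambda$. Second, if $x \in \Rdr \setminus \mathcal{O}_\lambda$, then $\bfu_\lambda = \bfu$ near $x$, and at every Lebesgue point of $\nabla^2\bfw \in L^s(\Rdr)$ we have $|\nabla\bfu_\lambda| \leq c\abs{\nabla^2\bfw} \leq c\, M(\nabla^2\bfw) \leq c\lambda$.

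For the stability estimates~\ref{itm:Sest2} and~\ref{itm:Sest3}, I would work from the representation $\bfu_\lambda - \bfu = \sum_j \curl(\phi_j(\bfw_j-\bfw))$ of Lemma~\ref{lem:Srepr}, each summand supported in $Q_j$. A Leibniz expansion bounds a single summand pointwise by $c r_j^{-1}|\bfw_j-\bfw| + c|\nabla(\bfw_j-\bfw)|$, and $\nabla$ of the same summand by $c r_j^{-2}|\bfw_j-\bfw| + c r_j^{-1}|\nabla(\bfw_j-\bfw)| + c|\nabla^2\bfw|$. Because $\nabla\bfw_j = \dashint_{Q_j}\nabla\bfw$ is the gradient mean, Poincar\'e on $Q_j$ converts these into $L^q$-averages of $\nabla\bfw - (\nabla\bfw)_{Q_j}$ for~\ref{itm:Sest2} and of $\nabla^2\bfw$ for~\ref{itm:Sest3}. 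Summing in $j$ via the finite overlap~\ref{itm:Swhit4} and then invoking the Calder\'on--Zygmund bounds $\norm{\nabla\bfw}_q \leq c\norm{\bfu}_q$ and $\norm{\nabla^2\bfw}_q \leq c\norm{\nabla\bfu}_q$ from~\eqref{eq:curl1}--\eqref{eq:curl2} delivers $\norm{\bfu_\lambda}_q \leq c\norm{\bfu}_q$ and $\norm{\nabla\bfu_\lambda}_q \leq c\norm{\nabla\bfu}_q$.

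The step I expect to be the main obstacle is in fact the choice of the local object $\bfw_j$ in~\ref{itm:Sest4}: only the \emph{first-order} averaged Taylor polynomial $\Pi^1_{Q_j}\bfw$ makes $\bfw_j - \bfw_i$ an affine function and simultaneously makes $\curl\bfw_i$ constant, which are exactly the two facts needed for the telescoping on $Q_i$ to cleanly kill an otherwise uncontrolled term of order $|\nabla\bfw_i| = |\dashint_{Q_i}\nabla\bfw|$. A naive constant mean-value truncation would leave this term, and the maximal function of $\nabla^2\bfw$ alone offers no pointwise bound on $\nabla\bfw$; this is precisely why passing through the curl representation, together with one extra derivative in the averaged Taylor polynomial, is what makes the simplified construction work.
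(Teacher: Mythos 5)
Your proposal is correct and follows essentially the same route as the paper: part~\ref{itm:Sest1} from the definition, parts~\ref{itm:Sest2}--\ref{itm:Sest3} via the representation $\bfu_\lambda-\bfu=\sum_j\curl(\phi_j(\bfw_j-\bfw))$ of Lemma~\ref{lem:Srepr}, a Leibniz expansion, the approximation estimate for the averaged Taylor polynomial, finite overlap of the $Q_j$, and the Calder\'on--Zygmund bounds~\eqref{eq:curl1}--\eqref{eq:curl2}; and part~\ref{itm:Sest4} by telescoping against $\bfw_i$ on $Q_i$, using that $\curl\bfw_i$ is constant, together with~\ref{itm:PP3}, inverse estimates for affine polynomials, and Lemma~\ref{lem:SPoin}~\ref{itm:SPoin3}. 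The only small imprecision is the side remark that $\nabla\bfw_j=\dashint_{Q_j}\nabla\bfw$: for the Brenner--Scott averaged Taylor polynomial this identity need not hold exactly, but it is immaterial since your argument only uses the approximation estimate of Lemma~3.1 of Ref.~\cite{DieR07}, which is precisely what the paper invokes. Your closing observation about why a first-order (rather than zeroth-order) local approximant is needed correctly identifies the mechanism of the construction.
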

\begin{proof}
  We will use the representation of Lemma~\ref{lem:Srepr}. The
  claim~\ref{itm:Sest1} follows from $\support(\phi_j) \subset
  \mathcal{O}_\lambda$. We estimate as in the proof of
  Lemma~\ref{lem:Slocal} 
  \begin{align*}
    \biggnorm{ \smash{\sum_{j \in \setN}} \curl(\phi_j(\bfw_j -
      \bfw))}_q^q &\leq c\, \sum_j \int_{Q_j} \frac{\abs{\bfw_j -
        \bfw}^q}{r_j^q} + \abs{\nabla (\bfw_j - \bfw)}^q \dx
    \\
    &\leq c\, \sum_j \int_{Q_j} \abs{\nabla \bfw}^q \dx \leq c\,
    \norm{\bfu}_q^q
  \end{align*}
  using the properties of the averaged Taylor polynomial,
  Lemma~3.1 of Ref.~\cite{DieR07}, and~\eqref{eq:curl1}. Similarly,
  with~\eqref{eq:curl2}
  \begin{align*}
    \biggnorm{ \nabla \smash{\sum_{j \in \setN}} \curl(\phi_j(\bfw_j -
      \bfw))}_q^q &\leq c\, \sum_j \int_{Q_j} \frac{\abs{\bfw_j -
        \bfw}^q}{r_j^{2q}} + \frac{\abs{\nabla(\bfw_j -
        \bfw)}^q}{r_j^{q}} + \abs{\nabla^2 \bfw}^q \dx
    \\
    &\leq c\, \sum_j \int_{Q_j} \abs{\nabla^2 \bfw}^q \dx \leq c\,
    \norm{\nabla \bfu}_q^q.
  \end{align*}
  This and the representation of Lemma~\ref{lem:Srepr}
  prove~\ref{itm:Sest2} and~\ref{itm:Sest3}.

  Let us show~\ref{itm:Sest4}. It suffices to verify $\abs{\nabla
    \bfu_\lambda} \leq c\,\lambda$ on $\mathcal{O}_\lambda$, since on
  $\Rdr \setminus \mathcal{O}_\lambda$ we have $\abs{\nabla \bfu} \leq
  M(\nabla \bfu) \leq M(\nabla^2 \bfw) \leq \lambda$. For $k \in
  \setN$ we estimate
  \begin{align*}
    \norm{\nabla \bfu_\lambda}_{L^\infty(Q_k)} &= \biggnorm{ \sum_{j
        \in A_k} \nabla \curl(\phi_j (\bfw_j -
      \bfw_k))}_{L^\infty(Q_k)}
    \\
    &\leq c\, \sum_{j \in A_k} \bigg( r_k^{-2} \norm{\bfw_j -
      \bfw_k}_{L^\infty(Q_j)} + r_k^{-1} \norm{\nabla(\bfw_j -
      \bfw_k)}_{L^\infty(Q_j)} \bigg)
    \\
    &\leq c\, \sum_{j \in A_k} r_k^{-2} \norm{\bfw_j -
      \bfw_k}_{L^\infty(Q_j)} \leq c\,\lambda,
  \end{align*}
  where we used $\sum_{j \in A_k} \phi_j=1$ on~$Q_k$, inverse
  estimates for linear polynomials and
  Lemma~\ref{lem:SPoin}~\ref{itm:SPoin3}. Now $\mathcal{O}_\lambda =
  \bigcup_k Q_k$ proves~\ref{itm:Sest4}.
\end{proof}
The following theorem is an application of the Lipschitz truncation
to weak null sequences. It is similar to the results
in Refs.~\cite{DieMS08,BreDieFuc12,DieKreSul13}, which were used to prove
the existence of weak solutions.
\begin{theorem}
  \label{thm:remlip}
  Let $1< s< \infty$ and let $\bfu_m \in W^{1,s}_{0,\divergence}(B)$
  be a weak $W^{1,s}_{0,\divergence}$ null sequence. Then there exist
  $j_0 \in \mathbb{N}$ and a double sequence $\lambda_{m,j} \in\R$ with
  $2^{2^j} \leq \lambda_{m,j}\leq 2^{2^{j+1}-1}$ such that the
  Lipschitz truncations $\bfu_{m,j} := \bfu_{\lambda_{m,j}}$ have the
  following properties for $j \geq j_0$.
  \begin{enumerate}[leftmargin=1cm,itemsep=1ex,label={\rm (\alph{*})}]
  \item \label{itm:remlip1} $\bfu_{m,j}\in
    W^{1,\infty}_{0,\divergence}(2B)$ and $\bfu_{m,j}=\bfu_m$ on $\Rdr
    \setminus \mathcal{O}_{m,j}$ for all $m \in \setN$,
    \\
    where
    $\mathcal{O}_{m,j} := \set{M(\nabla^2 (\curl^{-1} \bfu_m))>
      \lambda_{m,j}}$.
  \item \label{itm:remlip2} $\|\nabla\bfu_{m,j}\|_\infty\leq
    c\lambda_{m,j}$ for all $m \in \setN$,
  \item  \label{itm:remlip3} $\bfu_{m,j} \to 0$ for $m \to
    \infty$ in $L^\infty(\Omega)$,
  \item \label{itm:remlip4} $\nabla\bfu_{m,j} \weakastto 0$ for $m
    \to \infty$ in $L^\infty(\Omega)$,
  \item \label{itm:remlip5} For all $m,j \in \N$ holds
    $\norm{\lambda_{m,j} \chi_{\mathcal{O}_{m,j}}}_s
    \leq c(q)\, 2^{-\frac{j}{s}}\norm{\nabla \bfu_m}_s$.
  \end{enumerate}
\end{theorem}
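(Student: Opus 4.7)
The plan is to combine the stationary solenoidal Lipschitz truncation of Theorem~\ref{thm:Sest}, applied pointwise in $m$, with a pigeonhole selection of the truncation levels $\lambda_{m,j}$ from the dyadic windows $[2^{2^j}, 2^{2^{j+1}-1}]$, exactly in the spirit of the choice of $\lambda_{m,k}$ in the proof of Theorem~\ref{thm:wlwhole}\ref{itm:wlw:levelset}. Once the $\lambda_{m,j}$ are selected, properties~\ref{itm:remlip1} and~\ref{itm:remlip2} will be immediate from Theorem~\ref{thm:Sest}\ref{itm:Sest1},\ref{itm:Sest4}, the levelset estimate~\ref{itm:remlip5} is exactly what the pigeonhole produces, and the convergences~\ref{itm:remlip3},~\ref{itm:remlip4} will follow by compactness once $j$ is fixed.

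For the pigeonhole step, set $\bfw_m := \curl^{-1} \bfu_m$. By~\eqref{eq:curl2} $\|\nabla^2 \bfw_m\|_s \leq c\,\|\nabla \bfu_m\|_s$, and the $L^s$-boundedness of $M$ yields by a dyadic layer-cake computation
\begin{align*}
  \sum_{l \in \setN} 2^{ls}\, \bigabs{\set{M(\nabla^2 \bfw_m) > 2^{l}}} \,\leq\, c\,\|\nabla \bfu_m\|_s^s.
\end{align*}
Grouping the indices $l$ into blocks $\set{2^j, 2^j+1, \ldots, 2^{j+1}-1}$ of size $2^j$, the pigeonhole principle produces some $l^* = l^*(m,j)$ in such a block satisfying $2^{l^* s}\,\bigabs{\set{M(\nabla^2 \bfw_m) > 2^{l^*}}} \leq c\, 2^{-j}\, \|\nabla \bfu_m\|_s^s$. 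Setting $\lambda_{m,j} := 2^{l^*}$ places $\lambda_{m,j}$ in $[2^{2^j}, 2^{2^{j+1}-1}]$ and gives $\lambda_{m,j}^s\,\abs{\mathcal{O}_{m,j}} \leq c\, 2^{-j}\, \|\nabla \bfu_m\|_s^s$, which is precisely~\ref{itm:remlip5}. Since $\sup_m \|\nabla \bfu_m\|_s < \infty$, the threshold $\lambda_0$ of Lemma~\ref{lem:Slocal} is bounded uniformly in $m$, so fixing $j_0$ with $2^{2^{j_0}} \geq \lambda_0$ ensures that Theorem~\ref{thm:Sest} applies for every $j \geq j_0$; this delivers~\ref{itm:remlip1} and~\ref{itm:remlip2}.

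For~\ref{itm:remlip3} I invoke Rellich-Kondrachov to upgrade $\bfu_m \rightharpoonup 0$ in $W^{1,s}_0(B)$ to $\bfu_m \to 0$ strongly in $L^s(B)$, and then Theorem~\ref{thm:Sest}\ref{itm:Sest2} to transfer this to $\bfu_{m,j} \to 0$ in $L^s(2B)$. For each fixed $j$, the family $\{\bfu_{m,j}\}_m$ is supported in $\overline{2B}$ and uniformly Lipschitz with constant $\leq c\, 2^{2^{j+1}-1}$, hence equicontinuous and equibounded; Arzel{\`a}-Ascoli combined with uniqueness of the $L^s$ limit promotes the convergence to the uniform one, i.e.~\ref{itm:remlip3}. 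Then~\ref{itm:remlip4} follows from Banach-Alaoglu: for fixed $j$, $\|\nabla \bfu_{m,j}\|_\infty \leq c\,\lambda_{m,j}$ is bounded uniformly in $m$; any subsequential weak-$*$ limit must vanish by integration by parts against a smooth test function combined with~\ref{itm:remlip3}, so the whole sequence converges weak-$*$ to zero.

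The main obstacle will be the bookkeeping in the pigeonhole step: one must simultaneously verify that the chosen $\lambda_{m,j}$ lies in the prescribed dyadic window, enjoys the decay $\lambda_{m,j}^s \abs{\mathcal{O}_{m,j}} \leq c\, 2^{-j}\|\nabla \bfu_m\|_s^s$, and exceeds the $m$-independent threshold $\lambda_0$ needed to invoke Theorem~\ref{thm:Sest}. Beyond this step the remaining properties are essentially direct consequences of Theorem~\ref{thm:Sest} and standard compactness.
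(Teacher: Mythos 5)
Your proposal is correct, and it follows the same overall route that the paper delegates to its references: pigeonhole selection of $\lambda_{m,j}$ from the dyadic windows (exactly parallel to the choice of $\lambda_{m,k}$ in the proof of Theorem~\ref{thm:wlwhole}), then the pointwise and levelset estimates of Theorem~\ref{thm:Sest}, and finally standard compactness (Rellich $\to$ Arzel\`a--Ascoli $\to$ Banach--Alaoglu) for~\ref{itm:remlip3} and~\ref{itm:remlip4}. Since the paper itself omits a detailed proof, your reconstruction is a faithful expansion rather than a divergence; the only item worth flagging is that your use of Theorem~\ref{thm:Sest}\ref{itm:Sest2} to transfer $L^s$-convergence to $\bfu_{m,j}$ relies on the constant there being $\lambda$-independent, which it is (and which you implicitly use), and that the uniform bound on the stationary threshold $\lambda_0$ of Lemma~\ref{lem:Slocal} across $m$ — needed to fix a single $j_0$ — follows, as you note, from $\sup_m\norm{\nabla\bfu_m}_s<\infty$.
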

\begin{proof}
  The proof is almost exactly as in Refs.~\cite{BreDieFuc12,DieKreSul13}
  once we have Theorem~\ref{thm:Sest}. We only use additionally the
  continuity properties of~$\curl^{-1}$, see~\eqref{eq:curl1} and
  \eqref{eq:curl2}. We therefore omit a detailed proof.
\end{proof}
 {
 \subsection*{Acknowledgement}
 The work of the authors was supported by Leopoldina (German National Academy of Science). The authors wish to thank Erik B\"aumle who found a serious flaw in an earlier version of this paper.
}

\def\polhk#1{\setbox0=\hbox{#1}{\ooalign{\hidewidth
  \lower1.5ex\hbox{`}\hidewidth\crcr\unhbox0}}}
  \def\ocirc#1{\ifmmode\setbox0=\hbox{$#1$}\dimen0=\ht0 \advance\dimen0
  by1pt\rlap{\hbox to\wd0{\hss\raise\dimen0
  \hbox{\hskip.2em$\scriptscriptstyle\circ$}\hss}}#1\else {\accent"17 #1}\fi}
  \def\ocirc#1{\ifmmode\setbox0=\hbox{$#1$}\dimen0=\ht0 \advance\dimen0
  by1pt\rlap{\hbox to\wd0{\hss\raise\dimen0
  \hbox{\hskip.2em$\scriptscriptstyle\circ$}\hss}}#1\else {\accent"17 #1}\fi}
  \def\ocirc#1{\ifmmode\setbox0=\hbox{$#1$}\dimen0=\ht0 \advance\dimen0
  by1pt\rlap{\hbox to\wd0{\hss\raise\dimen0
  \hbox{\hskip.2em$\scriptscriptstyle\circ$}\hss}}#1\else {\accent"17 #1}\fi}
  \def\ocirc#1{\ifmmode\setbox0=\hbox{$#1$}\dimen0=\ht0 \advance\dimen0
  by1pt\rlap{\hbox to\wd0{\hss\raise\dimen0
  \hbox{\hskip.2em$\scriptscriptstyle\circ$}\hss}}#1\else {\accent"17 #1}\fi}
  \def\cprime{$'$}
\providecommand{\bysame}{\leavevmode\hbox to3em{\hrulefill}\thinspace}
\providecommand{\MR}{\relax\ifhmode\unskip\space\fi MR }
\providecommand{\MRhref}[2]{%
  \href{http://www.ams.org/mathscinet-getitem?mr=#1}{#2}
}
\providecommand{\href}[2]{#2}

\end{document}